\documentclass{amsart}
\usepackage{amsmath}
\usepackage{framed,comment,enumerate}
\usepackage[pdftex]{graphicx}
\usepackage{epstopdf}
\usepackage{xcolor, framed}
\usepackage{color}


\def\R {\mathbb{R}}
\def\eps{\varepsilon}
\def\OP{obstacle problem }

\def\Qua{\mathcal{Q}}
\def\UQua{\mathcal{UQ}}

\def\Sph{\mathbb{S}^{d-1}}

\def\SPE{\mathcal{S}(p,\eps,1)}

\def\PosS{\{u>0\}}

\def\Dee{D_{ee}}

\def\hh{\hat{h}}
\def\hu{\hat{u}}
\def\hO{\hat{O}}

\def\minx{\underline{x}'}

\newtheorem{prop}{Proposition}[section]
\newtheorem{thm}{Theorem}[section]

\newtheorem{lem}{Lemma}[section]
\theoremstyle{definition}
\newtheorem{defi}{Definition}[section]
\newtheorem{rem}{Remark}[section]
\numberwithin{equation}{section}

\title[Regularity of the singular set]{Regularity of the singular set in the fully nonlinear obstacle problem } 

\author{Ovidiu Savin}
\address{Department of Mathematics,	Columbia University, New York, USA}
\email{savin@math.columbia.edu}

\author{Hui Yu}
\address{Department of Mathematics,	Columbia University, New York, USA}
\email{ huiyu@math.columbia.edu}
\thanks{O.~S.~is supported by  NSF grant DMS-1800645.}
\thanks{H.~Y.~is supported by NSF grant DMS-1954363.}

\title[Regularity of the singular set]{On the fine regularity of the singular set in the nonlinear obstacle problem } 

\begin{document}

\begin{abstract}
We revisit and sharpen the results from our previous work, where we investigated the regularity of the singular set of the free boundary in the 
nonlinear obstacle problem. As in the work of Figalli-Serra on the classical obstacle problem, we show that each stratum can be further decomposed into a `good' part and an `anomalous' part, where the former is covered by $C^{1,1-}$ manifolds, and the latter is of lower dimension. 
 \end{abstract}

\maketitle

\section{Introduction}

In this paper we revisit and sharpen the results of our previous work \cite{SY}, where we investigated the regularity of the singular set of the free boundary in the nonlinear obstacle problem. 

In its simplest form, the \textit{classical obstacle problem} consists in solving
\begin{equation}\label{eq1}
\triangle u= \chi_{\{u>0\}},
\end{equation}
in the class of nonnegative functions $u \ge 0$ defined in a domain $\Omega \subset \R^d$, subject to boundary conditions. An important feature of 
this problem is the presence of a \textit{free boundary} $\partial \{u>0\}$, which separates the coincidence region $\{u=0\}$ from the positivity set 
$\{u>0\}$. 

The regularity of the free boundary was established by Caffarelli in \cite{C1,C2} through a blow-up analysis. At each point $x_0$ on $\partial\{u>0\}$, the 
quadratic blow-ups of $u$ converge to either a half-quadratic polynomial like $\frac 12 [(x \cdot e_{x_0})^+]^2$ for some $e_{x_0}\in\Sph$, or to a whole-quadratic polynomial like $\frac 12 
x^T A_{x_0} x$ with $A_{x_0} 
\ge 0$ and $trace(A_{x_0})=1$. In the first case, we say that $x_0$ is a \textit{regular point} ($x_0\in Reg$), and in the second case that $x_0$ is 
a \textit{singular point} ($x_0\in\Sigma$).  The singular part of the free boundary $\Sigma$ can be decomposed further into $d$ strata $\Sigma^k$, according to the 
dimension $k$ of the kernel of $A_{x_0}$,
$$\Sigma = \Sigma^0 \cup \Sigma^1 \cup \dots \cup \Sigma^{d-1}.$$

The uniqueness of the blow-up limit and the rate of convergence of the blow-ups to this limit determine the fine 
properties of $Reg$ and $\Sigma$. In \cite{C1}, it was shown that $Reg$ is locally a $C^{1,\alpha}$ hypersurface. Combined with an earlier work of Kinderlehrer-Nirenberg \cite{KN}, this implies that $Reg$ is analytic. In 
\cite{C2}, the uniqueness of the blow-up limits was established at points in $\Sigma$. This implies that each $\Sigma^k$ is locally covered by a $C^1$ submanifold of dimension $k$.

The regularity of $\Sigma$ was improved successively through quantitative rates of convergence of the blow-ups. When $d=2$, Weiss obtained the $C^{1,\alpha}$ regularity of the manifolds by introducing the  Weiss monotonicity formula \cite{W}. Based on the same formula, Colombo-Spolaor-Velichkov \cite{CSV1} showed through a log-epiperimetric inequality that in higher dimensions the manifolds are of class $C^{1,\log^\eps}$. So far the best result is due to Figalli-Serra \cite{FS}, who employed techniques from the thin obstacle problem. By applying Almgren's monotonicity formula, they improved $C^{1,\log^\eps}$ to $C^{1,\alpha}$ for the manifold covering the top stratum $\Sigma^{d-1}$. They also showed that each stratum $\Sigma^k$ can be further divided into a \textit{`good' part }$\Sigma^k_g$ and an \textit{`anomalous' part} $\Sigma^k_a$, where the former is covered by $C^{1,1}$ manifolds, and the latter is of lower dimension. 

In \cite{SY}, we obtained a rate of convergence for the quadratic blow-ups centered at a point on $\Sigma$, through an iterative scheme which 
is based on the linearization of the problem. 
At a point on the top stratum $\Sigma^{d-1}$, the linearized equation is given by the thin obstacle problem. 
At points on the lower strata $\Sigma^k$ $k\le d-2$, the linearized equation   is a degenerate thin obstacle problem, where the obstacle supported on a subspace of null capacity. 
The results in \cite{SY} give same general regularity of the strata $\Sigma^k$ as in \cite{FS}, 
that is, locally, the top stratum $\Sigma^{d-1}$ is included in a $C^{1,\alpha}$ hypersurface, and, for $k \le d-2$, each lower stratum $\Sigma^k$ is included in a $k$-dimensional $C^{1,\log^\eps}$ submanifold. 

The estimates in \cite{SY} do not rely on monotonicity formulae, 
and are based on the propagation of monotonicity and convexity together with the regularity properties of the linearized problems. 
They apply to the more general \textit{nonlinear obstacle problem}
\begin{equation}\label{eq2}
F(D^2u)=\chi_{\{u>0\}}, \quad \quad u\ge0,
\end{equation} 
with $F\in C^{1,\alpha}$ a convex, uniformly elliptic operator, and \eqref{eq2} is understood in the viscosity sense, see \cite{CC}. 
The convexity of $F$ guarantees that the pure second derivatives $u_{ee}$ are super solutions, while $F \in C^1$ implies that the linearized problems have 
constant coefficients. 
The regularity of $Reg$ for the problem \eqref{eq2} was addressed by  K.A. Lee in \cite{L}, where the results of 
Caffarelli from \cite{C1} were extended.

In this paper, we refine the results in \cite{SY}. Similar to Figalli-Serra \cite{FS}, we obtain a further decomposition of each stratum
$\Sigma^k$ for the nonlinear problem \eqref{eq2} 
as $\Sigma^k=\Sigma^k_g \cup \Sigma^k_a,$ with $\Sigma^k_g$ more regular and $\Sigma^k_a$ of lower dimension.

Our main result is the following:

\begin{thm}\label{T0}
Let $u$ be a solution to \eqref{eq2}. 

Each stratum of the singular part can be decomposed as
$$\Sigma^k=\Sigma^k_g \cap \Sigma^k_a,$$
with $\Sigma_g^k$ locally covered by a $k$-dimensional $C^{1,1-}$ manifold, and $\Sigma_a^k$ relatively open in $\Sigma^k$, and

a) if $k \le d-2$, then $\Sigma^k_a$ has dimension at most $k-1$; in particular $\Sigma_a^0=\emptyset$ and $\Sigma^1_a$ consists of isolated points in $\Sigma^1$ (that can have 
accumulation points on $\cup_{k \ge 1} \Sigma^k$);

b) if $k=d-1$, then $\Sigma^k_a$ has dimension at most $d-3$; in dimension $d=3$, $\Sigma^2_a$ is finite. 
\end{thm}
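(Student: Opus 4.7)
The plan is to refine the iterative linearization scheme from \cite{SY} to produce a quantitative second-order expansion at each singular point, and then decompose $\Sigma^k$ according to the frequency behavior of this expansion. At a point $x_0 \in \Sigma$, after subtracting the quadratic blow-up $P_{x_0}(x) = \frac{1}{2}(x-x_0)^T A_{x_0}(x-x_0)$, the error $w := u - P_{x_0}$ should solve (after renormalization at each scale) the linearized thin/degenerate-thin obstacle problem in the kernel directions of $A_{x_0}$. I would define $\Sigma^k_g$ to be the set of points where $w$ has the minimal allowed homogeneity $\lambda_{\min}(k)$ at $x_0$ (frequency $3/2$ at the top stratum, and the corresponding degenerate-thin frequency for $k \le d-2$), and $\Sigma^k_a$ to be the set where $w$ vanishes to higher order. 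Relative openness of $\Sigma^k_a$ follows from upper semicontinuity of the frequency in $x_0$.

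The first step is to upgrade the qualitative rate from \cite{SY} to a \emph{quantitative} expansion: for $x_0 \in \Sigma^k_g$, there exists a $\lambda_{\min}$-homogeneous solution $q_{x_0}$ of the linearized problem such that
\begin{equation*}
\| u - P_{x_0} - q_{x_0}(\cdot - x_0) \|_{L^\infty(B_r(x_0))} \le C r^{2 + \lambda_{\min} + \alpha}
\end{equation*}
for every $\alpha < \lambda_{\min}$ gap to the next admissible frequency. This is obtained by iterating the linearization improvement-of-flatness in \cite{SY} and exploiting the gap between $\lambda_{\min}$ and the next eigenvalue of the linearized operator; the geometric decay at each scale is enforced because the frequency is assumed minimal. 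Then a Whitney-type extension, as in Figalli--Serra, packages the family $\{q_{x_0}\}_{x_0 \in \Sigma^k_g}$ into a coherent jet, producing the covering $C^{1,1-}$ $k$-manifold.

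For the anomalous stratum, I would use the standard dimension-reduction argument: at $x_0 \in \Sigma^k_a$, the blow-up of $w$ is a non-trivial homogeneous solution of the linearized (degenerate) thin obstacle problem with homogeneity strictly above $\lambda_{\min}$. By the classification of such homogeneous global solutions on the $k$-dimensional invariant subspace, and using the fact that the set of directions in which the error is invariant is larger than at generic points, a Federer-type covering/projection argument forces $\dim \Sigma^k_a \le k-1$ for $k \le d-2$. For the top stratum $k=d-1$, the classification of homogeneous solutions of the thin obstacle problem (Almgren--Athanasopoulos--Salsa type frequencies on $\mathbb{R}^{d-1}$) shows that anomalous blow-ups are invariant under a $2$-plane of symmetries, yielding $\dim \Sigma^{d-1}_a \le d-3$; in $d=3$ this collapses to a $0$-dimensional (hence locally finite) set because the anomalous blow-up profiles are isolated in frequency.

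The main obstacle I anticipate is producing the \textbf{quantitative} rate of convergence to the second-order profile $q_{x_0}$ without a monotonicity formula. In \cite{FS} this is done via Almgren's frequency, which is unavailable for \eqref{eq2}. Instead I would rely on the $C^1$-dependence of $F$ (so that the linearized problem has constant coefficients) to set up a dichotomy at each dyadic scale: either the rescaled error is already close to a $\lambda_{\min}$-homogeneous solution with matching norm (good case, improvement of flatness), or it has disproportionate mass in higher modes, which by minimality of frequency at $x_0$ cannot persist to scale zero. Iterating this dichotomy and summing the geometric series gives the desired $C^{1,1-}$ expansion; managing the error from the $C^{1,\alpha}$ (not $C^{1,1}$) regularity of $F$ and from the obstacle $\chi_{\{u>0\}}$ on the coincidence set is where the technical weight lies.
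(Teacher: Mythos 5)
Your overall architecture (linearize at each singular point, run a scale-by-scale dichotomy in place of a monotonicity formula, then apply Whitney extension) matches the paper, but the proposal contains a concrete error in the definition of the decomposition and a missing mechanism for the lower strata. At a point of the top stratum $\Sigma^{d-1}$, the normalized error $\frac{1}{\eps}(u-p)$ converges to a solution $v$ of the thin obstacle problem whose frequency at the origin must be \emph{strictly greater than} $2$: frequency $1$ or $\tfrac32$ forces $0\in Reg$, and frequency $2$ forces $0\in\Sigma^k$ with $k\le d-2$ (Steps 1--3 of Lemma \ref{L2}). Your assignment of ``minimal allowed homogeneity, frequency $3/2$'' to the \emph{good} points is therefore inverted and numerically wrong: the good points are those where the frequency is $\ge 3$ (yielding $C^{2,\beta}$ for every $\beta<1$), and the anomalous points are those with frequency in $(2,3)$. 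With your convention, upper semicontinuity of the frequency would make $\Sigma^k_a$ relatively \emph{closed}, not open; in the correct setup openness requires the persistence of the condition ``frequency $<3$'' across scales, which, absent an Almgren formula for \eqref{eq2}, is exactly what the frequency-gap theorem near the integers $2$ and $3$ (Theorem \ref{FG}) and the approximation classes $\mathcal A(\eta,B_r)$ are designed to supply. The bound $\dim\Sigma^{d-1}_a\le d-3$ then rests on the Focardi--Spadaro estimate \cite{FoS} for the set of thin-obstacle free boundary points with frequency in $(2,3)$, transferred to $u$ by compactness (Lemma \ref{cover}); your appeal to ``invariance under a $2$-plane of symmetries'' gestures at the right heuristic but does not substitute for that input.

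The second gap concerns $k\le d-2$. There the linearized problem is a degenerate thin obstacle problem whose obstacle is supported on a set of null capacity, so there is no useful frequency theory or classification of homogeneous solutions to feed a Federer dimension-reduction argument. The paper's dichotomy at these strata is of a different nature (Lemma \ref{L1}): one compares $u$ with the unconstrained replacement $h$ solving $F(D^2h)=1$ with the same boundary data, and splits according to whether $|D^2h(0)^-|$ is $\gtrsim\eps$ or $\lesssim\eps$, i.e.\ whether the obstacle separates quadratically on top of $h$ in a kernel direction. In the anomalous case the negative second derivative of $\hat h-\hat O$ along an $x''$-direction, combined with the barrier argument of Remark \ref{r22}, traps $\Sigma\cap B_\rho$ in an arbitrarily thin slab around a $(k-1)$-dimensional subspace, which is what gives $\dim\Sigma^k_a\le k-1$ (and $\Sigma^0_a=\emptyset$, $\Sigma^1_a$ discrete). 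Without this replacement-by-$h$ mechanism, your proposed route through homogeneous solutions of the degenerate linearized problem has no classification to lean on, and the $k-1$ bound does not follow.
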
  

By a $C^{1,1-}$ manifold, we understand a manifold that is of class $C^{1,1-\eps}$ for any $\eps>0$. As mentioned above, Figalli and Serra established 
the decomposition  for solutions to \eqref{eq1}. Their result includes the end-point $C^{1,1}$ regularity of the manifolds. 

Theorem \ref{T0} is a consequence of Whitney's extension theorem, Theorems \ref{T1} and \ref{T2}, where explicit point-wise $C^2$ estimates for 
$u$ are derived at various points on $\Sigma$. The set of anomalous points on $\Sigma$ consists of those 
points where 
$u$ fails to be $C^{2,\alpha}$, for some $0<\alpha <1$, see Definitions \ref{AN1} and \ref{AN2}.  

The proof of  part b) in Theorem \ref{T0} is based on related results for the thin obstacle problem. One important ingredient is from a recent work of Focardi-Spadaro \cite{FoS}. For the thin obstacle problem, they showed that  the set of free boundary points with 
frequencies between 2 and 3 is of dimension at most $d-3$. Another key ingredient is the stability of free boundary points of low frequencies. This stability is obtained by establishing a general frequency gap near all integers for the thin obstacle problem, which is interesting in its own: 
\begin{thm}[Frequency gap in the thin obstacle problem]\label{FGIntro} For each $m \in \mathbb N$, there exits a constant $\alpha_m>0$ small, depending only on the dimension $d$ and $m$, so that $ (m-\alpha_m,m+\alpha_m) \setminus\{m\}$ does not contain any admissible frequency for the thin obstacle problem.  
\end{thm}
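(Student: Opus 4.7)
The plan is to prove the statement by compactness and rigidity. Assume, for contradiction, that the conclusion fails for some integer $m$: there is a sequence of Signorini solutions $u_n$, free boundary points $x_n \in \mathrm{FB}(u_n)$, and Almgren frequencies $\kappa_n := N(0^+, u_n(\cdot + x_n)) \to m$ with $\kappa_n \neq m$. After translating, choose scales $r_n$ and normalize $\hat u_n(x) := u_n(r_n x)/\sqrt{H(r_n)}$, where $H(r) = \fint_{\partial B_r} u_n^2$, so that $\|\hat u_n\|_{L^2(\partial B_1)}=1$. Standard $C^{1,1/2}$ estimates for the thin obstacle problem together with Almgren monotonicity yield uniform local bounds; extract a subsequential limit $u_*$ in $C^{1,\alpha}_{\mathrm{loc}}$, which is a nontrivial global Signorini solution. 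Upper semicontinuity of Almgren's frequency and passing to the limit in $N(r,\hat u_n) \in [\kappa_n, \kappa_n + o(1)]$ give $N(r,u_*) \equiv m$ for all $r>0$, so $u_*$ is $m$-homogeneous.

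The next step is to classify $u_*$. Since $m$ is an integer, one shows that any $m$-homogeneous global solution of the Signorini problem on $\R^d$ is a harmonic polynomial $p_m$ of degree $m$ which is nonnegative on $\{x_d=0\}$ and whose normal derivative is nonpositive on its coincidence set. This reduces to showing that the even extension of $u_*$ across $\{x_d=0\}$, together with the Signorini complementarity and the integer homogeneity, forces polynomial growth and polynomial structure. With the classification in hand, set $v_n := \hat u_n - p_m$ and $w_n := v_n/\|v_n\|_{L^2(B_1)}$; along a subsequence $w_n \to w_*$ in $L^2$, and $w_*$ satisfies the Signorini problem linearized around $p_m$, namely $\Delta w_* = 0$ off the coincidence set $\Lambda_{p_m} \subset \{x_d=0\}$, together with the mixed boundary conditions $w_* \geq 0$ on $\Lambda_{p_m}$ and $\partial_d w_* \leq 0$ on $\{p_m > 0\} \cap \{x_d=0\}$ (with complementarity).

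A frequency computation then shows that $N(r, w_*)$ must equal $m$: the Almgren frequency of $w_n$ inherits the value $\kappa_n \to m$ from $\hat u_n$ modulo the correction from $p_m$, which is itself $m$-homogeneous. Thus $w_*$ is a nontrivial $m$-homogeneous solution of the linearized problem. If every such $w_*$ arises as an infinitesimal variation of $p_m$ within the $m$-homogeneous Signorini class, then $w_*$ can be absorbed into the polynomial (by replacing $p_m$ with a nearby polynomial), contradicting the normalization $\|w_*\|_{L^2(B_1)}=1$. The contradiction forces the claim that no admissible frequencies accumulate at $m$ other than $m$ itself, giving the gap.

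The main obstacle is precisely the spectral rigidity step: one must show that the linearized Signorini operator at $p_m$ has its $m$-homogeneous eigenspace exhausted by the tangent space to the variety of admissible polynomials $p_m$. This requires a detailed analysis of the coincidence set $\Lambda_{p_m}$, which is an algebraic variety whose structure depends on $d$ and $m$, and a mode-by-mode study of the mixed problem on $\R^d \setminus \Lambda_{p_m}$; it is here that the constant $\alpha_m$ acquires its dependence on both $d$ and $m$. A secondary subtlety is ensuring that the blow-up scale $r_n$ can be chosen so that $\hat u_n$ is genuinely close to an $m$-homogeneous profile rather than oscillating in scale, which is handled using the integer character of the limiting frequency and the Weiss-type monotonicity compatible with $m$-homogeneity.
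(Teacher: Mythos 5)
The central step of your argument---the ``spectral rigidity'' of the linearized Signorini problem at $p_m$---is precisely the content of the theorem, and you leave it unproven; you yourself flag it as ``the main obstacle.'' Everything before that point (compactness, $m$-homogeneity of the limit, classification of integer-homogeneous solutions) only shows that frequencies near $m$ produce solutions close to an $m$-homogeneous profile, which is not in question. Moreover, the route you propose for closing the gap has two further problems. First, the claim that $w_* = \lim (\hat u_n - p_m)/\|\hat u_n - p_m\|_{L^2}$ is itself $m$-homogeneous does not follow from ``a frequency computation'': Almgren's frequency is not stable under subtraction of $p_m$, and one would need a genuinely new monotonicity formula for the normalized difference (this is a delicate point even in the classical obstacle problem in the work of Figalli--Serra, and is not available off the shelf for Signorini). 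Second, even granting $m$-homogeneity of $w_*$, the absorption argument requires both that $p_m$ be chosen $L^2(\partial B_1)$-optimally (so that $w_*$ is orthogonal to the tangent space of the polynomial variety---otherwise there is no contradiction with $\|w_*\|=1$) and a full classification of the $m$-homogeneous solutions of the mixed boundary value problem on $\R^d\setminus\Lambda_{p_m}$; the coincidence set $\Lambda_{p_m}$ is an essentially arbitrary algebraic variety and no such spectral classification is known for general $m$ and $d$. As written, the proof does not go through.

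For comparison, the paper's argument is short and avoids linearization entirely. One takes $\lambda_j$-homogeneous solutions $v_j$ with $\lambda_j\to m$, $\lambda_j\ne m$, normalized in $L^2(B_1)$ and converging to an $m$-homogeneous solution $v$, and exploits the Green-type identity between homogeneous functions: for $m$ even one pairs the nonnegative measure $-\Delta v_j$ (supported on $\{x_1=0\}$) against $v\pm\delta P_I$, where $P_I$ is the $m$-homogeneous harmonic function equal to $|x'|^m$ on the thin space; for $m$ odd one pairs $v_j$ against $\pm\delta\,\Delta Q_I-\Delta v$. Homogeneity makes the boundary term equal to $(m-\lambda_j)$ times a quantity that is positive for both choices of sign, while the support and sign constraints of the Signorini conditions force the bulk term to change sign with $\pm\delta$---an immediate contradiction. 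If you wish to complete your proof you would essentially have to reproduce such an identity anyway, so I would recommend arguing directly with homogeneous solutions rather than through the linearized problem.
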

We only use this result for $m=2$ and $3$. Near even integers, this gap was already known from Colombo-Spolaor-Velichkov \cite{CSV2}. 
Our proof is different and  applies to odd integers as well. 

The paper is organized as follows. In Section 2, we review the results from \cite{SY} and explain how we will quantify them more precisely. In 
Section 3, we prove Theorem \ref{T1},which implies part a) of Theorem \ref{T0}. In Section 4, we review the relevant results for the thin obstacle problem and prove Theorem \ref{FGIntro}. In Section 5, we obtain Theorem \ref{T2} which corresponds to part b) of Theorem \ref{T0}.

\section{Some results of the first paper}

In this section, we state  our hypotheses and review some results in \cite{SY}.

Let $u$ be a solution to the nonlinear obstacle problem \begin{equation}\label{OP}
\begin{cases}F(D^2u)=\chi_{\PosS}, &\\u\ge0,&
\end{cases} \text{ in $B_1$,}
\end{equation} where $F$ is a fully nonlinear elliptic operator, satisfying the assumptions in the follow subsection.

\subsection{Assumptions on $F$ and consequences} Let $\mathcal{S}_d$ denote the space of $d$-by-$d$ symmetric matrices. The operator $F:\mathcal{S}_d\to\R$ satisfies: 
\begin{equation}\label{FirstAssumption}F(0)=0; \text{ }F\text{ is convex};\end{equation} 
\begin{equation}\label{SecondAssumption}F \text{ is $C^{1,\alpha_F}$ for some $\alpha_F\in(0,1)$ with $C^{1,\alpha_F}$ seminorm } [F]_{C^{1,\alpha_F}}\le C_F;  \end{equation}
there is a constant $1\le\Lambda<+\infty$ such that \begin{equation}\label{Ellipticity}\frac{1}{\Lambda}\|P\|\le F(M+P)-F(M)\le\Lambda\|P\|\end{equation} for all $M,P\in\mathcal{S}_d$ and $P\ge 0.$

We call a constant \textit{universal} if it depends only on the dimension $d$, the elliptic constant $\Lambda$ and $C_F$, $\alpha_F$.

For a $C^2$ function $\varphi$, define the \textit{linearized operator} $L_\varphi:\mathcal{S}_d\to\R$ by \begin{equation*}\label{LinearizedOperator}L_\varphi(M)=\sum_{ij} F_{ij}(D^2\varphi)M_{ij},\end{equation*}where $F_{ij}$ denotes the derivative of $F$ in the $(i,j)$-entry, and $D^2\varphi$ is the Hessian of $\varphi$. One consequence of convexity is \begin{equation}\label{CompareWithLinearizedEquation}
L_v(w-v)\le F(D^2w)-F(D^2v)\le L_w(w-v).
\end{equation} 

Let $v$ be a solution to $$F(D^2v)=1 \text{ in $B_1$}.$$ The Evans-Krylov estimate and \eqref{SecondAssumption} imply that 
$$\|v\|_{C^{3,\alpha}(B_{1/2})}\le C \|v\|_{\mathcal{L}^\infty(B_1)}$$
with $C$, $\alpha>0$ universal. The same estimate holds for the difference between $v$ and a quadratic polynomial $p$ with $F(D^2p)=1,$ that is, 
\begin{equation}\label{v-p}
\|v-p\|_{C^{3,\alpha}(B_{1/2})}\le C\|v-p\|_{\mathcal{L}^\infty(B_1)}.
\end{equation}

In particular, if $u$ solves \eqref{OP}, then in $\PosS$ we can differentiate the equation in a unit direction $e\in\Sph$, and then use convexity of $F$ to get\begin{equation}\label{EquationForDerivatives}
L_u(D_eu)=0,\text{ } L_u(D_{ee}u)\le 0 \text{ in $\PosS.$}
\end{equation} 

It is standard to estimate the higher norms of the difference between two solutions of $F$ in terms of the $L^\infty$ norm of their difference (see Proposition 2.2 in \cite{SY}).
\begin{prop}\label{EstimateForDifference}
Let $F$ be as above. 

If $v$ and $w$ solve $$F(D^2v)=F(D^2w)=1 \text{ in $B_1$},$$ then 
$$\|v-w\|_{C^{2,\alpha}(B_{1/2})}\le C\|v-w\|_{\mathcal{L}^\infty(B_1)}$$ 
with $\alpha\in(0,1)$ universal, and $C$ further depending on
$\|v\|_{\mathcal{L}^\infty(B_1)}$ and $\|w\|_{\mathcal{L}^\infty(B_1)}.$

\end{prop}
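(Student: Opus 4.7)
\smallskip

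\textbf{Proof plan.} The strategy is to view $h:=v-w$ as a solution to a linear uniformly elliptic equation with $C^\alpha$ coefficients, and then appeal to the classical Schauder estimates. The key identity comes from the fundamental theorem of calculus applied to $F$:
$$0=F(D^2v)-F(D^2w)=\int_0^1\frac{d}{dt}F\bigl(tD^2v+(1-t)D^2w\bigr)\,dt=a_{ij}(x)D_{ij}h,$$
where $a_{ij}(x):=\int_0^1 F_{ij}\bigl(tD^2v(x)+(1-t)D^2w(x)\bigr)\,dt$. By the uniform ellipticity assumption \eqref{Ellipticity}, the matrix $(a_{ij})$ is uniformly elliptic with the same constants.

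The first main step is to upgrade the regularity of $v$ and $w$ from $L^\infty$ to $C^{2,\beta}$ on the intermediate ball $B_{3/4}$, with $\beta\in(0,1)$ universal. This is precisely the content of the Evans–Krylov theorem (applicable because $F$ is convex and uniformly elliptic), and the resulting constant in $\|v\|_{C^{2,\beta}(B_{3/4})}\le C(\|v\|_{L^\infty(B_1)})$ depends only on $\|v\|_{L^\infty(B_1)}$ and the universal data. The same bound holds for $w$.

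The second step is to exploit the $C^{1,\alpha_F}$ hypothesis \eqref{SecondAssumption} on $F$, which turns the $C^\beta$ regularity of $D^2v$ and $D^2w$ into $C^{\alpha_F\beta}$ regularity of the coefficients $a_{ij}$. Concretely, $a_{ij}$ is the composition of the $C^{\alpha_F}$ map $F_{ij}$ with the $C^\beta$ Hessians, so $[a_{ij}]_{C^{\alpha_F\beta}(B_{3/4})}$ is controlled by $C_F$ and by the $C^\beta$ seminorms of $D^2v,D^2w$. Setting $\alpha:=\alpha_F\beta$, this yields a uniformly elliptic equation with $C^\alpha$ coefficients whose seminorm depends only on universal constants, $\|v\|_{L^\infty(B_1)}$, and $\|w\|_{L^\infty(B_1)}$.

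The final step is a straightforward application of the Schauder interior estimate to $h$ on $B_{1/2}\Subset B_{3/4}$: since $a_{ij}D_{ij}h=0$, one obtains $\|h\|_{C^{2,\alpha}(B_{1/2})}\le C\|h\|_{L^\infty(B_{3/4})}\le C\|h\|_{L^\infty(B_1)}$, with $C$ depending on the ellipticity, the $C^\alpha$ norm of the coefficients, and hence on $\|v\|_{L^\infty(B_1)}$ and $\|w\|_{L^\infty(B_1)}$. The only mildly delicate point is tracking the dependencies to verify that the exponent $\alpha$ is genuinely universal while the constant $C$ absorbs the $L^\infty$ norms through the Evans–Krylov step; no serious obstacle arises.
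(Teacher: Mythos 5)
Your argument is correct and is precisely the standard proof that the paper alludes to by citing \cite{SY}: write $v-w$ as a solution of $a_{ij}D_{ij}(v-w)=0$ with $a_{ij}=\int_0^1 F_{ij}(tD^2v+(1-t)D^2w)\,dt$, use Evans--Krylov to make the coefficients $C^{\alpha_F\beta}$ with norm controlled by the $L^\infty$ bounds, and conclude by Schauder. All dependencies are tracked correctly, and the exponent $\alpha=\alpha_F\beta$ is indeed universal.
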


\subsection{The free boundary}

Lee established in \cite{L} the optimal regularity $u \in C_{loc}^{1,1}$, and analyzed the free boundary $\partial\{u>0\}$ by a blow-up analysis. The results can be summarized as follows.

\begin{thm}\label{C11}
Let $u$ be a solution to \eqref{OP}. Assume that $ 0 \in \partial \{u>0\}$. Then $$\|u\|_{C^{1,1}(B_{1/2})}\le C$$ for some $C$ universal. Moreover, the blow-up rescalings 
$$u_r(x):=r^{-2} u(r x)$$
converge locally uniformly along subsequences of $r_j \to 0$ to global solutions that are either 

1) $$\mbox{a half-quadratic } \quad  \frac 12c_\xi [(x \cdot \xi)^+]^2 \quad \mbox{for some $ \xi \in \Sph$ and $c_\xi\in\R$ with $ F (c_\xi\xi \otimes \xi) =1$, or }$$

2) $$ \mbox{a whole quadratic} \quad \frac 12 x^T A x \quad \mbox{ with $A \ge 0$, $F(A)=1$.} $$

\end{thm}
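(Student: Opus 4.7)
The plan is to follow the Caffarelli-style blow-up analysis, adapted to the convex fully nonlinear setting using the tools recalled in this section (Evans-Krylov for $F(D^2 v)=1$, the inequality \eqref{CompareWithLinearizedEquation}, and the fact from \eqref{EquationForDerivatives} that $L_u(D_{ee}u)\le 0$ in $\{u>0\}$). There are three stages: (i) prove the quadratic growth bound $\sup_{B_r(0)}u\le Cr^2$, which with interior estimates yields the $C^{1,1}$ bound; (ii) use that bound to extract compactness of $u_r$; and (iii) classify the possible limits.

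For the quadratic growth bound, I would run a contradiction/rescaling argument. Assume $S_j:=\sup_{B_{r_j}}u\gg r_j^2$ for some $r_j\to 0$ and set $\tilde u_j(x):=S_j^{-1}u(r_jx)$. Then $\tilde u_j\ge0$, $\|\tilde u_j\|_{L^\infty(B_1)}=1$, $\tilde u_j(0)=0$, and $\tilde u_j$ satisfies $\tilde F_j(D^2\tilde u_j)=(r_j^2/S_j)\chi_{\{\tilde u_j>0\}}$ with $\tilde F_j$ uniformly elliptic and the right-hand side tending to zero. Viscosity stability together with the $C^{1,\alpha}$ estimates for uniformly elliptic equations give a limit $\tilde u_\infty$ which is a bounded, nonnegative, nontrivial solution of $F_\infty(D^2\tilde u_\infty)=0$ in the whole space vanishing at the origin; this contradicts the strong minimum principle. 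Once $\sup_{B_r}u\le Cr^2$ is known, applying \eqref{v-p} to the rescalings $r^{-2}u(rx)$ inside $\{u>0\}$ at points of distance comparable to $r$ from the free boundary yields $|D^2u|\le C$ uniformly, hence the $C^{1,1}$ bound.

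For the blow-up analysis, the $C^{1,1}$ bound makes $\{u_r\}_{r<1}$ precompact in $C^1_{loc}$, and viscosity stability gives a subsequential limit $u_0$ which is a global solution of \eqref{OP} with $u_0(0)=Du_0(0)=0$ and $|D^2u_0|\le C$. To classify $u_0$, the key step is to establish that $u_0$ is convex. This I would do by exploiting $L_{u_0}(D_{ee}u_0)\le 0$ in $\{u_0>0\}$ from \eqref{EquationForDerivatives}: a further blow-down of $u_0$ at infinity produces a limit whose pure second derivatives are constant, forcing the infimum of $D_{ee}u_0$ to be attained at infinity; the strong minimum principle applied in $\{u_0>0\}$ then propagates this to $D_{ee}u_0\ge0$ everywhere. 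With $u_0$ convex, the contact set $\{u_0=0\}$ is convex. If it has empty interior, then $F(D^2u_0)=1$ globally and Evans-Krylov combined with the Liouville theorem for entire solutions of quadratic growth forces $u_0=\tfrac12 x^TAx$, with $A\ge 0$ from $u_0\ge 0$ and $F(A)=1$. If the contact set has nonempty interior, convexity plus the homogeneity of the blow-up limit (the scale-invariance $s^{-2}u_0(sx)=u_0(x)$ for limits of quadratic rescalings) force it to be a half-space $\{x\cdot\xi\le 0\}$, and solving $F(D^2u_0)=1$ on the complementary half-space with vanishing Cauchy data along $\{x\cdot\xi=0\}$ gives the half-quadratic profile $\tfrac12c_\xi[(x\cdot\xi)^+]^2$.

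The principal obstacle is the convexity step for $u_0$: unlike the classical case, one cannot simply invoke the ACF monotonicity formula or harmonic extension, and $u_0$ itself need not be convex without the blow-up structure. The argument therefore has to combine the convexity of $F$ (to obtain the supersolution property of $D_{ee}u_0$) with a two-scale blow-down to identify the minimum value of $D_{ee}u_0$ at infinity, and a careful use of the strong minimum principle across $\partial\{u_0>0\}$ via the vanishing of $Du_0$ there. A secondary subtlety is ruling out spurious limits in the whole-quadratic case where $A$ might fail to be nonnegative; this is handled by passing to the limit in $u_0\ge 0$ and using that $u_0$ is homogeneous of degree two.
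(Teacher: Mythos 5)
First, note that the paper does not prove Theorem \ref{C11} at all: it is quoted from Lee \cite{L} (the fully nonlinear extension of Caffarelli's compactness method), so there is no internal proof to compare against. Judged on its own terms, your stages (i)--(ii) are essentially the standard argument; the only (fixable) issue there is that the contradiction rescaling $\tilde u_j=S_j^{-1}u(r_j\cdot)$ needs the usual careful selection of radii giving a doubling bound $S(2r)\le CS(r)$ along the sequence. Without it you control $\tilde u_j$ only on $B_1$, the supremum can escape to $\partial B_1$, and the limit may be trivial rather than a nontrivial entire solution to which the minimum principle applies.

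The genuine gap is in stage (iii). You classify the blow-up $u_0$ using ``convexity plus the homogeneity of the blow-up limit,'' but the scale-invariance $s^{-2}u_0(sx)=u_0(x)$ is not automatic: for the classical problem it comes from the Weiss monotonicity formula, and the entire premise of this paper (stated explicitly in the introduction and Section 2) is that no such monotonicity formula is available for nonlinear $F$. Without homogeneity, convexity alone does not force the dichotomy: already for the Laplacian there are convex global solutions of quadratic growth whose coincidence sets are paraboloids or ellipsoids, which are neither half-space solutions nor polynomials, so your half-space conclusion in the ``nonempty interior'' case does not follow from what you have established. The standard route (Caffarelli \cite{C1,C2}, adapted by Lee) avoids homogeneity: one runs a dichotomy on the density (minimal diameter) of $\{u=0\}$ in $B_r$ as $r\to0$ --- positive thickness forces directional monotonicity of $u$ in a cone (via barriers of the type underlying Lemma \ref{Monotonicity1}), hence $0\in Reg$ and a half-space blow-up, while vanishing thickness forces $|\{u_0=0\}|=0$, so $F(D^2u_0)=1$ a.e.\ and Evans--Krylov plus Liouville give the polynomial. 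Your convexity step also hides the hard part: the supersolution property of $D_{ee}u_0$ localizes its infimum to $\partial(\{u_0>0\}\cap B_R)$, and the delicate case is the infimum being approached along a sequence tending to the free boundary, where $D_{ee}u_0$ has no continuous extension and ``the strong minimum principle propagates this'' is not an argument; one needs the quantitative nondegeneracy/barrier construction of Caffarelli and Lee--Shahgholian at such points.
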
 

If there is a blow-up sequence which ends up in case 1), then we say that $0$ is a {\it regular} free boundary point and write $0 \in Reg$. Otherwise, any blow-up limit is a whole quadratic, and we say that $0$ is a {\it singular} point and write $0 \in \Sigma$. 

\begin{rem}\label{r1}
If $0\in\Sigma$, then the zero set $\{u=0\}$ cannot contain a nontrivial cone with vertex at $0$. In particular, the solution $u$ cannot be nondecreasing in an open cone of directions near the origin. 
\end{rem}

These definitions imply that the free boundary decomposes into the regular part and the singular part $$\partial\PosS=Reg \cup\Sigma.$$

It was shown in \cite{L} that $Reg$ is open in $\partial \{u>0\}$, and that  the blow-up limit at a regular point is unique. Moreover, in a neighbourhood of a point from $Reg$, the free boundary $\partial \{u>0\}$ is a $C^{1,\alpha}$ hypersurface which separates the coincidence  set, $\{u=0\},$ from the positivity set $\{u>0\}$.

In \cite{SY}, we analyzed the behavior of $u$ near points in $\Sigma$, and showed the uniqueness of the blow-up limit profile in 2). 
As a consequence, the singular set $\Sigma$ can be decomposed into $d$ disjoint sets (\textit{strata}), depending on the dimension of the kernel of the blow-up polynomial $\frac 12 (x-x_0)^T A_{x_0} (x-x_0)$ at a point $x_0 \in \Sigma$:
$$\Sigma= \Sigma^0 \cup \Sigma^1 \cup ... \cup \Sigma^{d-1}$$
with
 $$\Sigma^k:=\{x_0 \in \Sigma|\dim ker(A_{x_0})=k\}.$$

\subsection{Results in \cite{SY}} We recall the main results and some notations of \cite{SY}.

First we define the class of polynomial solutions to the obstacle problem, and include also the \textit{ convex} polynomials that do not necessarily satisfy the non-negative constraint.

\begin{defi}\label{QuadraticSolution}The class of \textit{quadratic solutions} is defined as $$\Qua=\left \{p:\quad p(x)=\frac{1}{2}x^TAx, \quad A\ge 0, \quad F(A)=1 \right \}.$$
The larger class of \textit{unconstraint convex quadratic solutions} is defined as $$\UQua=\left \{p: \quad p(x)=\frac{1}{2}x^T Ax+b\cdot x, \quad A\ge 0, \quad F(A)=1 \right\}.$$
\end{defi} 
Note that for a polynomial $p\in\UQua$, its convexity and the ellipticity \eqref{Ellipticity} of $F$ imply $$D^2p\le C \, I,$$ for some universal $C$.

The goal is to keep track of the polynomial approximations for a solution $u$ of \eqref{OP} in dyadic balls.

\begin{defi}\label{ApproxClass}
Given $\eps,r\in(0,1)$ and $p\in\UQua$, we say that $u$ is \textit{$\eps$-approximated by the polynomial $p$ in $B_r$}, and use the notation $$u\in\mathcal{S}(p,\eps,r)$$
  if $$u \text{ solves \eqref{OP} in $B_r$,} \quad \quad |u-p|\le\eps r^2 \text{ in $B_r$},$$ and 
  \begin{equation}\label{convexity}
  D^2u\ge-c_0\eps \,  I\text{ in $B_r,$}
  \end{equation} where $c_0=\frac{1}{16\Lambda^2}$.

\end{defi} 

We now state the main lemmas from \cite{SY}, which provide good rates of the quadratic approximations in dyadic balls. 
They imply the uniqueness of blow-ups together with regularity properties of each strata. 

Depending on the size of the eigenvalues of $D^2p$ with respect to $\eps$, there are two cases to consider.  

For a matrix $M\in\mathcal{S}_d$, we denote its eigenvalues by $$\lambda_1(M)\ge\lambda_2(M)\ge\dots\ge\lambda_d(M).$$
The first case deals with the situation when $u$ is $\eps$-approximated by a polynomial $p \in \UQua$ in $B_1$ 
with $\lambda_2(D^2 p)\gg\eps$. In this case, we expect $0 \in \Sigma^{k}$ with $k \le d-2$.

\begin{lem}[Lemma 5.1 in \cite{SY}, Quadratic approximation near codimension $\ge 2$]\label{QuadraticApproxLowerStratum}
Suppose $$u\in\SPE \quad \mbox{ with $0\in\Sigma$ and $p\in\UQua.$}$$ 
There are universal constants $\kappa_0$ large, $\bar{\eps}$ small, and $\rho\in(0,\frac 12)$ such that if $\eps<\bar{\eps}$  and  
\begin{equation}\label{l2f}
\lambda_2(D^2p)\ge\kappa_0\eps,
\end{equation} 
then $$u\in \mathcal{S}(p',\eps',\rho)$$ for some $p'\in\UQua,$ and one of two alternatives happens for $\eps'$:
\begin{enumerate}
\item{$$\eps'\le(1-c)\eps \quad \mbox{ for a universal $c \in(0,1)$; or}$$}
\item{$$\eps'\le\eps-\eps^\mu \quad \mbox{ and} \quad  (u-h)(\frac{1}{2}\rho e_1)\le C(\eps-\eps'),$$ 
for some universal constants $\mu, C>1,$ where $h$ is the solution  to the unconstrained problem
\begin{equation}\label{hdef}
\begin{cases}
F(D^2h)=1 &\text{ in $B_1$,}\\ h=u &\text{ on $\partial B_1$.}
\end{cases}
\end{equation}
}
\end{enumerate}
\end{lem}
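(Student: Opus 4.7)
The strategy is to compare $u$ with the solution $h$ of the unconstrained problem \eqref{hdef} and with the quadratic Taylor polynomial $p^*$ of $h$ at the origin. The improved polynomial $p' \in \UQua$ is then an $O(\eps)$-adjustment of $p^*$, and the dichotomy measures how much the nonnegative correction $w := u - h \ge 0$ interferes with this improvement.

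By viscosity comparison ($F(D^2u) = \chi_{\{u > 0\}} \le 1 = F(D^2 h)$ combined with $u = h$ on $\partial B_1$), we have $u \ge h$, so $w \ge 0$. The maximum principle applied to $h - p$ (both satisfying $F(\cdot) = 1$) together with $|u - p| \le \eps$ on $\partial B_1$ gives $|h - p| \le \eps$ in $B_1$; then \eqref{v-p} yields $\|h - p\|_{C^{3,\alpha}(B_{1/2})} \le C\eps$. Writing $p^*(x) := h(0) + \nabla h(0)\cdot x + \tfrac12 x^T D^2 h(0) x$, we obtain
\[|h(x) - p^*(x)| \le C\eps|x|^{2+\alpha} \text{ on $B_{1/2}$}, \qquad D^2 p^* = D^2 p + O(\eps).\]
The hypothesis $\lambda_2(D^2 p) \ge \kappa_0 \eps$ with $\kappa_0$ large ensures that the top two eigenvalues of $D^2 p^*$ remain bounded below by a positive multiple of $\eps$, so an $O(\eps)$-perturbation of $p^*$ produces $p' \in \UQua$ with $F(D^2 p') = 1$ and $D^2 p' \ge 0$. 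Fix a universal scale $\rho$ small enough that $C\rho^\alpha < c/2$ for a small universal $c$. The identity $u - p' = w + (h - p^*) + (p^* - p')$ then gives
\[\eps' \le \rho^{-2}\sup_{B_\rho} w + C\eps\rho^\alpha.\]

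If $\sup_{B_\rho} w \le (1-c)\eps\rho^2/2$, the canonical choice of $p'$ yields $\eps' \le (1-c)\eps$, which is alternative (1). Otherwise $w$ attains a value larger than $(1-c)\eps\rho^2/2$ somewhere in $B_\rho$, and we pass to the reference point $\tfrac12\rho e_1$ (obtained after a rotation aligning coordinates with the eigenbasis of $D^2 p$) using the supersolution property $L_h(w) \le 0$ (which follows from \eqref{CompareWithLinearizedEquation}) together with the semiconvexity bound $D^2 u \ge -c_0\eps I$. A Harnack-type argument transfers the $L^\infty$ lower bound on $w$ to a pointwise witness at $\tfrac12\rho e_1$. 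A refined adjustment of $p'$ then absorbs a controlled multiple of $w(\tfrac12\rho e_1)$ into the approximation, producing the slower decay $\eps' \le \eps - \eps^\mu$ together with the pointwise inequality $(u-h)(\tfrac12\rho e_1) \le C(\eps - \eps')$, which is alternative (2). The exponent $\mu > 1$ is ultimately determined by matching the Hölder rate $\alpha$ of \eqref{v-p} against the slow-improvement arithmetic.

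The main difficulty lies in Case (2): establishing a quantitative mechanism that converts the $L^\infty$ lower bound on the nonnegative supersolution $w$ over $B_\rho$ into a pointwise bound at the prescribed reference point $\tfrac12\rho e_1$, and then engineering $p' \in \UQua$ so that the decay in $\eps$ and the pointwise bound on $w$ are realized simultaneously. Remark \ref{r1} (ruling out $u$ being flat in a cone at $0$) and the semiconvexity $D^2 u \ge -c_0\eps I$ are the key structural inputs here, along with the hypothesis $\kappa_0 \gg 1$, which prevents degenerate cancellations when adjusting the eigenvalues of $p^*$ to keep $p'$ in $\UQua$.
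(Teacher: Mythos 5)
Your skeleton (compare $u$ with the unconstrained solution $h$, Taylor-expand $h$, project onto $\UQua$, then split into two cases) matches the paper's, but there are genuine gaps. The most serious one is that you never verify the semiconvexity requirement $D^2u\ge -c_0\eps'\,I$ in $B_\rho$, which is part of the definition of $\mathcal S(p',\eps',\rho)$ and is in fact what drives the dichotomy. In the paper the two alternatives are governed by $a:=\frac1\eps|(D^2h(0))^-|$, not by $\sup_{B_\rho}(u-h)$: when $a$ is small, the weak Harnack inequality applied to $\frac1\eps u_{ee}^-$ upgrades the convexity bound together with the $L^\infty$ bound; when $a$ is of order one, the convexity bound can only be improved by an amount proportional to $(u-h)(\tfrac12\rho e_1)$, via the comparison of the supersolution $D_{ee}u+c_0\eps\ge 0$ with the subsolution $u-h$ of $L_u$ in $B_1\cap\{u>0\}$ (both vanish appropriately on the boundary of that set). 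This comparison is precisely what produces the inequality $(u-h)(\tfrac12\rho e_1)\le C(\eps-\eps')$ in alternative (2); your ``refined adjustment of $p'$'' does not supply a mechanism for it, and your matching of $\mu$ against the H\"older exponent $\alpha$ of \eqref{v-p} is not where $\eps^\mu$ comes from (it comes from a Harnack chain/Pucci barrier argument starting from a ball of radius $\sim\eps$ where $h<-c\eps$).

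Two further problems. First, your estimate $\eps'\le\rho^{-2}\sup_{B_\rho}w+C\eps\rho^\alpha$ in the ``good'' case silently drops the cost of projecting $p^*$ onto $\UQua$: replacing $D^2h(0)$ by its positive part introduces an error $\sim|(D^2h(0))^-|\rho^2$, and the threshold $\sup_{B_\rho}w\le(1-c)\eps\rho^2/2$ only forces $|(D^2h(0))^-|\le C\eps$, not $\ll\eps$, so no geometric gain follows. Second, relating $\sup_{B_\rho}(u-h)$ to $|(D^2h(0))^-|$ at all requires first proving that $h$ is tangent to the obstacle to order two at the origin, i.e.\ $h(0)$ and $\nabla_{x''}h(0)$ are $o(\eps)$ — this is Steps 1--2 of the paper and uses Lemma \ref{OutsideCylinder} (closeness of $u$ and $h$ outside a thin cylinder around the codimension-two subspace where $\{u=0\}$ concentrates), barriers exploiting the fast decay of the obstacle in the $x'$ directions, and the propagation Lemmas \ref{Monotonicity1} and \ref{Convexity} to contradict $0\in\Sigma$. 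You cite Remark \ref{r1} and the hypothesis $\kappa_0\gg1$ as ``key structural inputs'' but never deploy them; without these steps neither alternative closes.
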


We make a few comments on Lemma \ref{QuadraticApproxLowerStratum}. 

The situation (1) is consistent with a $C^{2,\alpha}$-estimate for $u$ at $0$ for some $\alpha\in(0,1)$ depending on $c$ and $\rho$. The situation (2) gives a much slower improvement for $\eps(r)$ as $r \to 0$, and it 
implies only a $C^{2,\log^c}$-estimate for $u$ at $0$. In Section \ref{kg2}, by quantifying Lemma \ref{QuadraticApproxLowerStratum} more precisely, we give a refined characterization of the strata $\Sigma^k$ with $k \le d-2$.

Firstly, given any $\beta <1$, the estimate in (1) can be improved to $ \eps' \le \rho^\beta \eps$, consistent with the $C^{2,\beta}$ scaling. Secondly, if alternative (2) holds for some ball $B_r$, we show that it continues to hold for all dyadic balls $B_{\rho^{m}r}$. As a result, we show that $u$ cannot be point-wise $C^{2,\alpha}$ at the origin for any $\alpha >0$. In this second case, using the notation as in \cite{FS}, we say that $0$ is an anomalous point of $\Sigma^k$ ($0\in\Sigma^k_a$). It follows that $\Sigma^k_a$ is open in $\Sigma^k$, and has dimension at most $k-1$. The remaining part of $\Sigma^k$, the `good' part $\Sigma^k_g$, can be covered by a $C^{1,1-}$ submanifold of dimension $k$.

The key observation in the proof of Lemma \ref{QuadraticApproxLowerStratum} is that the hypothesis \eqref{l2f} implies that $u$ and $h$ are $o(\eps)$-close away from a codimension 2 subspace. For a positive constant $\eta$, we define the following cylinder $$\mathcal{C}_\eta=\{|(x_1,x_2)|\le\eta\}.$$ 
\begin{lem}[Lemma 5.2 in \cite{SY}]\label{OutsideCylinder}
Let $u, p, h$ be as in Lemma \ref{QuadraticApproxLowerStratum}.

Given $\eta$ small, there is $\kappa_\eta$, depending on universal constants and $\eta$, such that if $\kappa_0 \ge\kappa_\eta$, then $$\|u-h\|_{C^2(B_{1/2}\backslash\mathcal{C}_{\eta})}\le\eta\eps$$ for all $\eps$ small, depending on $\eta.$ 
\end{lem}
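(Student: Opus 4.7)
The plan is to argue by contradiction and compactness. If the conclusion fails for some small $\eta > 0$, there exist sequences $\kappa_0^{(n)} \to \infty$, $\eps_n \to 0$, and triples $(u_n, p_n, h_n)$ satisfying the hypotheses of Lemma~\ref{QuadraticApproxLowerStratum} with $\|u_n - h_n\|_{C^2(B_{1/2} \setminus \mathcal{C}_\eta)} > \eta \eps_n$. The first task is to localize the coincidence set. Diagonalize $A_n := D^2 p_n$ so that $e_1, e_2$ correspond to the two largest eigenvalues $\lambda_1^{(n)} \ge \lambda_2^{(n)} \ge \kappa_0^{(n)} \eps_n$, and write $p_n(x) = \tfrac12 \sum_i \lambda_i^{(n)} x_i^2 + b_n \cdot x$. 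The bound $p_n \ge -\eps_n$ on $B_1$, forced by $u_n \ge 0$ together with $|u_n - p_n| \le \eps_n$, yields $(b_i^{(n)})^2 \le 2\lambda_i^{(n)} \eps_n$ for every $i$ with $\lambda_i^{(n)} > 0$ (evaluate $p_n$ at the critical point, which lies in $B_1$ for $\kappa_0^{(n)}$ large). Combined with $0 \in \{u_n = 0\} \subset \{p_n \le \eps_n\}$, this confines $\{u_n = 0\}$ to a tube $\mathcal{C}_{\delta_n}$ with $\delta_n = C/\sqrt{\kappa_0^{(n)}} \to 0$.

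Next I normalize by $\tilde u_n := (u_n - p_n)/\eps_n$ and $\tilde h_n := (h_n - p_n)/\eps_n$; both are uniformly bounded in $L^\infty(B_1)$, the first by the definition of $\mathcal{S}(p_n, \eps_n, 1)$, the second by the comparison principle using $F(A_n) = 1$. Since $\|A_n\| \le \Lambda$ by \eqref{Ellipticity}, expanding $F$ around $A_n$ using $F \in C^{1,\alpha_F}$ gives, on $\{u_n > 0\} \supset B_1 \setminus \mathcal{C}_{\delta_n}$,
\[ L_{A_n}(\tilde u_n) = O(\eps_n^{\alpha_F}), \qquad L_{A_n}(\tilde h_n) = O(\eps_n^{\alpha_F}) \text{ on all of } B_1, \]
with $L_{A_n}$ a constant-coefficient, uniformly elliptic linear operator. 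Interior Schauder estimates for both, and boundary $C^{2,\alpha}$ estimates for $u_n - h_n$ on the portion of $\partial B_1$ outside $\mathcal{C}_{2\delta_n}$ (where $u_n$ and $h_n$ both solve $F(D^2 \cdot) = 1$ with $u_n = h_n$), allow the extraction of subsequential limits: $A_n \to A_\infty$ with $F(A_\infty) = 1$, $\tilde h_n \to \tilde h_\infty$ in $C^2_{loc}(B_1)$, and $\tilde u_n \to \tilde u_\infty$ in $C^2_{loc}(B_1 \setminus \Pi)$ with $\Pi := \{x_1 = x_2 = 0\}$. The difference $\tilde w := \tilde u_\infty - \tilde h_\infty$ is a bounded solution of the linear equation $L_{A_\infty}(\tilde w) = 0$ on $B_1 \setminus \Pi$, vanishes on $\partial B_1 \setminus \Pi$, and satisfies $\|\tilde w\|_{C^2(B_{1/2} \setminus \mathcal{C}_\eta)} \ge \eta$.

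The contradiction comes from the classical removability of codimension-two subspaces for bounded solutions of constant-coefficient uniformly elliptic linear equations. After a linear change of variables reducing $L_{A_\infty}$ to the Laplacian, $\Pi$ is mapped to another codimension-two subspace $\Pi'$; the function $\log(1/\rho)$, with $\rho$ the distance to $\Pi'$, is positive and harmonic on $B_1 \setminus \Pi'$ and diverges on $\Pi'$. Comparing $\pm \tilde w$ against $\delta \log(1/\rho)$ on $B_1 \setminus \Pi'$ and sending $\delta \to 0$ forces $\tilde w \equiv 0$, contradicting the $C^2$ lower bound. The main obstacle is the localization step: turning the qualitative inequality $\lambda_2(D^2 p_n) \ge \kappa_0^{(n)} \eps_n$ into a confinement of $\{u_n = 0\}$ to a tube of arbitrarily small radius as $\kappa_0^{(n)} \to \infty$, crucially using $0 \in \Sigma$ to control the linear part $b_n$ of $p_n$ — without which the coincidence set could drift far from the origin and destroy the compactness.
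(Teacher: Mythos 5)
Your argument is correct, and it runs on exactly the mechanism that the paper attributes to this lemma (which is only quoted here from [SY], not reproved): the hypothesis $\lambda_2(D^2p)\ge\kappa_0\eps$ forces the coincidence set into a tube of radius $O(\kappa_0^{-1/2})$ around a codimension-two subspace, and codimension-two sets are removable (null capacity) for uniformly elliptic constant-coefficient operators, so the constrained and unconstrained solutions must agree to order $o(\eps)$ away from the tube. The difference is one of packaging: you run a compactness/contradiction argument and invoke removability only for the limit, whereas the original proof in [SY] is direct and quantitative, comparing $u-h\ge 0$ (a Pucci subsolution off the coincidence set, vanishing on $\partial B_1$ and bounded by $2\eps$) with an explicit logarithmic barrier built on $\mathcal{C}_\delta$, and then upgrading to the $C^2$ bound by interior estimates for the difference of two solutions of $F=1$ (Proposition \ref{EstimateForDifference}). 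Your route is softer and shorter but loses the explicit dependence $\kappa_\eta(\eta)$; both are valid, and your final step (uniform interior $C^{2,\alpha}$ bounds on $B_{3/4}\setminus\mathcal{C}_{\eta/2}$ so that the $C^2$ defect survives the limit) is handled correctly. One small point to tighten: the bound $(b_i^{(n)})^2\le 2\lambda_i^{(n)}\eps_n$ is only justified for $i=1,2$ (where $\lambda_i^{(n)}\ge\kappa_0^{(n)}\eps_n$ forces the one-dimensional critical point into the ball); for $i\ge 3$ with $\lambda_i^{(n)}$ tiny the critical point may leave $B_1$ and the stated inequality can fail. This does not affect the conclusion, since the confinement of $\{p_n\le\eps_n\}$ to $\mathcal{C}_{\delta_n}$ only needs the bounds in the first two directions together with the fact that the total negative part of $p_n$ over $B_1$ is at least $-\eps_n$, but the claim as written should be restricted to $i=1,2$.
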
 

The idea of the proof of Lemma \ref{QuadraticApproxLowerStratum} is to show that $h$ is essentially tangent of order 2 at the origin i.e. $h(0)$, $|\nabla 
h(0)| = o(\eps)$ (otherwise $0$ would be in $Reg(u)$). Then alternatives (2) and (1) are dictated by whether or not $D^2 h(0)$ has negative eigenvalues of size $\eps$.

We remark that the estimate for $u-h$ at the point $\frac \rho 2 e_1$ is used towards the convergence of the corresponding Hessians $D^2 h(0)$ as we zoom in using dyadic balls. Suppose that we are in the slow improvement situation (2). Let $h'$ denote the solution to \eqref{hdef} in the ball $B_\rho$. By the  maximum principle, we have $u \ge h' \ge h$ in $B_\rho$. The Harnack inequality for the difference $h'-h$, and Proposition \ref{EstimateForDifference} imply
\begin{align}\label{h'-h}
|D^2h'(0)-D^2 h(0)| & \le C_\rho \|h'-h\|_{L^\infty(B_\rho)} \nonumber \\
& \le C (h'-h)(\frac{1}{2}\rho e_1) \le C (u-h)(\frac{1}{2}\rho e_1)\le C_1(\eps-\eps') ,
\end{align}
for some  $C_1$, depending on universal constants and $\rho$. 

We now state the main quadratic approximation lemma from \cite{SY} in the case when only $\lambda_1(D^2 p)$ is much larger than $\eps$. 

\begin{lem}[Lemma 4.1 in \cite{SY}. Quadratic approximation near codimension 1] \label{QuadraticApproxTopStratum}
Suppose $$0 \in \Sigma, \quad \quad u\in\SPE \quad \mbox{with $p\in\Qua$}$$ 
and $$\lambda_2(D^2p)\le \kappa\eps$$
for some constant $\kappa>0$. 

There are constants $\bar{\eps},c \in(0,1)$ and $\bar{\rho}\in(0,1/2)$, depending on universal constants and $\kappa$, such that if $\eps<\bar{\eps}$, then $$u\in\mathcal{S}(p',\eps',\rho) \quad \mbox{ for some $p'\in\Qua$,}$$
with $$\eps'=(1-c)\eps, \quad \quad \mbox{and} \quad \rho\in(\bar{\rho},1/2).$$
\end{lem}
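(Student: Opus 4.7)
The natural approach is compactness and contradiction, in parallel with the proof of Lemma \ref{QuadraticApproxLowerStratum}, but with the linearized problem now being the thin obstacle problem rather than the unconstrained Dirichlet problem for $F$. Suppose the lemma fails: there exist $\eps_n \downarrow 0$ and $u_n \in \mathcal{S}(p_n,\eps_n,1)$ with $0 \in \Sigma(u_n)$, $p_n \in \Qua$, and $\lambda_2(D^2 p_n) \le \kappa \eps_n$, for which no $p'_n \in \Qua$ improves the approximation at any $\rho \in (\bar\rho,1/2)$ by a universal factor. By ellipticity and $F(D^2 p_n)=1$, the $D^2 p_n$ are bounded with $\lambda_1 \ge \gamma>0$, so after a rotation and a subsequence, $D^2 p_n \to A_* = \lambda_* \, e_d\otimes e_d$. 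Setting $v_n := (u_n-p_n)/\eps_n$, the bounds $|v_n|\le 1$ and $D^2 v_n \ge -c_0 I$ from Definition \ref{ApproxClass}, together with the interior $C^{1,1}$ bound of Theorem \ref{C11}, yield a locally uniform limit $v_n \to v_\infty$ on $B_{1/2}$.

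Next I would identify the limiting problem. In $\{u_n > 0\}$, \eqref{CompareWithLinearizedEquation} gives $L_{p_n}(u_n - p_n) \le 0$, hence $L_{p_n}(v_n) \le 0$; off any fixed neighborhood of $\{x_d = 0\}$ the polynomial $p_n$ is bounded below by a positive constant, so $\{u_n > 0\}$ covers that region for $n$ large and both sides of \eqref{CompareWithLinearizedEquation} collapse to $L_{p_n}(v_n) = 0$. The constraint $u_n \ge 0$ reads $v_n \ge -p_n/\eps_n$, which in the limit gives $v_\infty \ge 0$ on $\{x_d = 0\}$ (since $p_n|_{\{x_d=0\}} \to 0$) and no constraint off $\{x_d = 0\}$ (since $p_n/\eps_n \to +\infty$ there). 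Hence $v_\infty$ solves the thin obstacle problem for the constant-coefficient elliptic operator $L_{A_*}$, which after a linear change of variables is the classical Signorini problem. Because $0 \in \Sigma(u_n)$, Remark \ref{r1} prevents any monotonicity of $u_n$ in a cone of directions at the origin; combined with the semi-convexity of $v_n$, this forces $0$ to remain a free-boundary point of $v_\infty$ of frequency exactly $2$, i.e.\ a \emph{singular} point of the limiting thin obstacle problem.

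In the final step I use the regularity at singular points for the thin obstacle problem: $v_\infty$ is approximated near $0$ by a homogeneous quadratic $q$ that is harmonic for $L_{A_*}$, even in $x_d$, and non-negative on $\{x_d = 0\}$. Choose $\rho \in (\bar\rho,1/2)$ universally so that $\|v_\infty - q\|_{L^\infty(B_\rho)} \le \tfrac12 \rho^2$, giving $\|v_n - q\|_{L^\infty(B_\rho)} \le \tfrac34 \rho^2$ for $n$ large. Define the candidate $\tilde p_n := p_n + \eps_n q$; since $D^2 q$ has a vanishing $e_d\otimes e_d$ component and the eigenvalues of $D^2 p_n$ transverse to $e_d$ are $O(\eps_n)$ while $\lambda_1(D^2 p_n) \ge \gamma$, one has $D^2 \tilde p_n \ge 0$ for $n$ large. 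A small correction $s_n P$ with $P>0$ fixed and $s_n = O(\eps_n^2)$, produced by the $C^1$ implicit-function theorem applied to $F$, restores $F(D^2 p'_n) = 1$; the corrected $p'_n$ lies in $\Qua$ and satisfies $|u_n - p'_n| \le (1-c)\eps_n \rho^2$ in $B_\rho$ for a universal $c>0$, contradicting the failure assumption. The main obstacle is the middle step: one has to show that the coincidence set of $u_n$ collapses exactly to $\{x_d = 0\}$, that $0 \in \Sigma(u_n)$ rules out regular profiles of frequency $3/2$ in the limit, and that the thin-obstacle singular-point theory combined with the $C^{1,\alpha_F}$ hypothesis on $F$ makes the $\Qua$-correction cost only $O(\eps_n^2)$.
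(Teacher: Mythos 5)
Your overall strategy --- compactness to a solution of the thin obstacle problem for the constant-coefficient linearized operator, exclusion of the low frequencies $1$ and $\tfrac32$ using $0\in\Sigma(u_n)$ together with the monotonicity/convexity lemmas, and absorption of the quadratic part of the limit into a corrected polynomial $p'\in\Qua$ --- is exactly the route of Lemma 4.1 in \cite{SY}, reproduced here in the proof of Lemma \ref{L2}. The gap is in how you describe the limit and in the positivity of $D^2p'$. The exclusion arguments only give that the frequency of $v_\infty$ at $0$ is \emph{at least} $2$; it is not "exactly $2$" in general (this is precisely the dichotomy exploited in Section \ref{ke1}), and even when it equals $2$ the blow-up need not be even in the thin variable: $x_dx_i$ is an admissible $2$-homogeneous harmonic blow-up. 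The correct expansion is $v_\infty=\sum_{i<d} b_i x_dx_i+\tfrac12 x\cdot Ax+o(|x|^2)$, with a harmonic odd part and an even part governed by Garofalo--Petrosyan uniqueness \cite{GP} when the even frequency equals $2$ (and of size $o(|x|^2)$ otherwise). This matters because your verification of $D^2\tilde p_n\ge 0$ uses that $q$ has no $x_dx_i$ cross terms. With such terms present, the transverse block of $D^2\tilde p_n$ can have a zero eigenvalue in a direction where the cross term does not vanish, and the Schur complement is then negative of order $\eps_n^2$; one must complete the square in the $e_d$ direction (making the odd part rank-one with $D^2p_n$) and/or add a term $\tfrac12C_A\eps^2|x|^2$ as in \eqref{66} before correcting $F$ back to $1$. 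Your "$s_nP$ with $P>0$" correction does not repair this, since the implicit function theorem gives $s_n$ of either sign.

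Two further points. The conclusion $u\in\mathcal S(p',\eps',\rho)$ requires, by Definition \ref{ApproxClass}, not only $|u-p'|\le\eps'\rho^2$ but also the improved semiconvexity $D^2u\ge-c_0\eps' I$ in $B_\rho$ with $\eps'=(1-c)\eps$; you never verify this. It is not automatic and needs a weak Harnack argument as in Lemma \ref{L222} or Step 5 of Lemma \ref{L1}. Finally, the correction restoring $F(D^2p'_n)=1$ costs $O(\eps_n^{1+\alpha_F})$, not $O(\eps_n^2)$, since $F$ is only $C^{1,\alpha_F}$ by \eqref{SecondAssumption}; this is still $o(\eps_n)$ and harmless for the final estimate, but the bookkeeping should reflect it.
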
 

The idea of its proof is to show that the normalized error $\frac 1 \eps (u-p)$ is well approximated by a solution to the linearized problem,  the thin obstacle problem. The frequency at the origin for this thin obstacle problem cannot be 1 or $\frac 32$, since otherwise $0$ is either interior to $\{u=0\}$ or   in  $Reg$. Then the frequency has to be at least 2, which implies the geometric improvement $ \eps \to \eps'$ above.  

In Section \ref{ke1}, we quantify Lemma \ref{QuadraticApproxTopStratum} more precisely and obtain a refined characterization of the stratum $\Sigma^{d-1}$. Depending the frequency at the origin in the thin obstacle problem, we have a dichotomy. 

If the frequency is higher than or equal to $3$, we show that we can replace $ \eps'  = \rho^\beta \eps$ (for any fixed $\beta <1$), consistent with the $C^{2,\beta}$ scaling. Otherwise,  we leave $\eps'$ as above but the rescaled error has `frequency' between 2 and 3. In addition, if the second alternative holds for some ball $B_r$, then  it continues to hold for all other smaller dyadic balls. In this case, using the notation as above, we say that $0$ is an anomalous point ($0\in\Sigma_a^{d-1}$). It follows that $\Sigma^{d-1}_a$ is open in $\Sigma^{d-1}$, and has dimension at most $d-3$. The remaining part of $\Sigma^{d-1}$, the `good' part $\Sigma^{d-1}_g$, can be covered by a $C^{1,1-}$ hypersurface.

Since there is no monotonicity formula for the nonlinear problem, we rely on geometric information of the solution  in terms of its monotonicity and convexity. Roughly,  the following lemmas state that if $u$ is monotone/convex in $B_r$ away from a strip of width $\eta \ll r$, then $u$ is monotone/convex in $B_{r/2}$.

\begin{lem}[Lemma 3.2 in \cite{SY}]\label{Monotonicity1}
Suppose $u\in\mathcal{S}(p,\eps,r)$  satisfies the following for some constants $K$, $\sigma$, and $0<\eta<r$,  and a direction $e\in\Sph$:
$$D_eu\ge -K\eps \text{ in $B_r$}, \quad \quad \mbox{ and} \quad  D_eu\ge\sigma\eps \text{ in $B_r\cap\{|x_1|\ge\eta\}$}.$$

There is $\bar{\eta}$, depending on universal constants, $r$, $\sigma$ and $K$, such that if $\eta\le\bar{\eta}$, then $$D_eu\ge 0 \text{ in $B_{r/2}$.}$$
\end{lem}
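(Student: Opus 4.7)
The proof is a maximum principle argument for the linearized operator. Set $w := D_e u$; by \eqref{EquationForDerivatives}, $L_u w = 0$ in the open set $\{u > 0\} \cap B_r$. Since $u \in C^{1,1}$ attains its minimum $0$ on $\{u = 0\}$, one has $\nabla u = 0$ there, hence $w = 0$ on $\partial \{u > 0\}$. A first observation is geometric: combining the hypothesis $w \ge \sigma \eps > 0$ on $\{|x_1| \ge \eta\}$ with $w = 0$ on $\{u = 0\}$ forces the coincidence set to lie in the strip, $\{u = 0\} \cap B_r \subset \{|x_1| \le \eta\}$, so the free boundary is confined there. At any $x_0 \in B_{r/2}$ outside the strip, or in $\{u = 0\}$, the conclusion is trivial; it remains to handle $x_0 \in B_{r/2} \cap \{u > 0\} \cap \{|x_1| < \eta\}$.

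On the domain $D := \{u > 0\} \cap B_r$, $w$ is $L_u$-harmonic with boundary data $w \ge -K\eps$ on the strip portion of $\partial B_r$, $w \ge \sigma\eps$ on its complement, and $w = 0$ on the free boundary. The plan is to compare $w$ to the explicit subsolution
$$\Phi(x) := \sigma\eps - (K+\sigma)\eps\, G(x),$$
where $G : B_r \to \R$ is a Pucci $\mathcal{M}^+$-supersolution (so $L_u G \le 0$ and $\Phi$ is an $L_u$-subsolution) with boundary values $G = 1$ on $\partial B_r \cap \{|x_1| \le \eta\}$ and $G = \sigma/(K+\sigma)$ on the rest of $\partial B_r$. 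The minimum principle for Pucci supersolutions gives $G \ge \sigma/(K+\sigma)$ throughout $B_r$, so $\Phi \le 0$ everywhere; this ensures $\Phi \le 0 = w$ on the free boundary, while $\Phi \le w$ on $\partial B_r \cap D$ is immediate from the values of $G$. The comparison principle then yields $w \ge \Phi$ in $D$.

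The quantitative heart of the argument is the Pucci harmonic-measure bound
$$G(x_0) - \frac{\sigma}{K+\sigma} \le C_0 (\eta/r)^{\alpha_0} \quad \text{for } x_0 \in B_{r/2},$$
with $C_0, \alpha_0 > 0$ universal, which follows because $G - \sigma/(K+\sigma)$ solves a Pucci Dirichlet problem whose nontrivial boundary data is supported on the thin equatorial band $\partial B_r \cap \{|x_1| \le \eta\}$, of relative size $\lesssim \eta/r$. Substituting gives $w(x_0) \ge -K\eps\, C_0 (\eta/r)^{\alpha_0}$, which can be made arbitrarily small by choosing $\bar\eta$ depending on $r$, $\sigma$, $K$ and universal constants. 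The main obstacle is upgrading this to the sharp $w \ge 0$ rather than $w \ge -\delta\eps$; this is where the convexity hypothesis $D^2 u \ge -c_0 \eps I$ from Definition \ref{ApproxClass} enters, since it makes $w + c_0 \eps\, (x \cdot e)$ monotone along $e$-lines, and combined with the confinement of the free boundary to the strip this allows one to absorb the residual $O(\eps (\eta/r)^{\alpha_0})$ error and conclude $D_e u \ge 0$ in $B_{r/2}$.
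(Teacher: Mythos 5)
Your setup and the first half of the argument are sound: $w:=D_eu$ satisfies $L_uw=0$ in $\{u>0\}$ and vanishes on $\{u=0\}$, the coincidence set is confined to the strip, and the Pucci harmonic--measure comparison correctly yields $w\ge -C_0(K+\sigma)\eps(\eta/r)^{\alpha_0}$ in $B_{r/2}$. The genuine gap is the last step, where you must pass from $w\ge-\delta\eps$ (with $\delta$ small) to the \emph{exact} inequality $w\ge0$; this exactness is essential, since the lemma is invoked precisely to produce an open cone of monotonicity directions and conclude $0\in Reg$ (e.g.\ Step 1 of the proof of Lemma \ref{L1}). The mechanism you propose --- that $D^2u\ge-c_0\eps I$ makes $t\mapsto w(x+te)+c_0\eps t$ nondecreasing --- cannot close the argument: it only transfers a lower bound for $w$ from a point $x-se$ to $x$ at the cost of an additional $-c_0\eps s$, so it can never upgrade a strictly negative bound to zero; moreover, in the applications $e$ is parallel to the strip (there $e=e_i$ with $i>k$), so the $e$-lines never exit $\{|x_1|<\eta\}$ and there is no nearby point with $w\ge\sigma\eps$ to transfer from. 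No linear barrier or iteration of the harmonic-measure step can produce the exact sign either; it must come from the structure of the obstacle problem.

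The missing ingredient is a Caffarelli-type comparison using $u$ itself as the majorant that vanishes exactly on the free boundary. First, one propagates positivity into the strip: if $u(y)\ge cr^2$, then a ball $B_{c'r}(y)\subset\{u>0\}$ (by the $C^{1,1}$ bound and $|\nabla u|^2\le Cu$) meets $\{|x_1|\ge\eta\}$ in positive proportion, and the Harnack inequality applied to $w+\delta\eps\ge0$ gives $w\ge\sigma'\eps$ on $\{u\ge cr^2\}\cap B_{3r/4}$ once $\delta$ is small --- this is why the hypothesis is recorded in the form of Remark \ref{Monotonicity2}. Second, suppose $w(x_0)<0$ for some $x_0\in B_{r/2}$, and set $\Phi:=A\,D_eu-u+c_1|x-x_0|^2$ with $c_1$ small universal and $A=\sup_{B_r}u/(\sigma'\eps)$. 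By \eqref{EquationForDerivatives} and \eqref{CompareWithLinearizedEquation} one has $L_uu\ge F(D^2u)-F(0)=1$ in $\{u>0\}$, hence $L_u\Phi\le-1+2c_1\,\mathrm{tr}\,F_{ij}(D^2u)\le0$, so $\Phi$ is a supersolution; it is negative at $x_0$, nonnegative on $\partial\{u>0\}$ because \emph{both} $u$ and $D_eu$ vanish there, nonnegative on $\partial B_{r/4}(x_0)\cap\{u\ge cr^2\}$ by the choice of $A$, and nonnegative on $\partial B_{r/4}(x_0)\cap\{u<cr^2\}$ because $c_1(r/4)^2$ dominates $cr^2+A\delta\eps$ once $c$ and then $\bar\eta$ are fixed small. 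The minimum principle then gives a contradiction. This interplay --- the strict supersolution property supplied by $L_uu\ge1$ together with the simultaneous vanishing of $u$ and $D_eu$ on the free boundary --- is what your sketch is missing and cannot be replaced by the one-dimensional convexity bound.
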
 

\begin{lem}[Lemma 3.4 in \cite{SY}]\label{Convexity}
Suppose $u\in\SPE.$ There is a universal constant $C$ such that if $\Dee p\ge C\eps$ along some direction $e\in\Sph$, then $$\Dee u\ge 0 \text{ in $B_{1/2}$.}$$
\end{lem}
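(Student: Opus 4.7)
The plan is to couple two facts: $D_{ee}u$ is a supersolution of the linearized operator $L_u$ in $\{u>0\}$ (from \eqref{EquationForDerivatives} and convexity of $F$), and $u$ is $\eps$-close to a polynomial $p$ that is $C\eps$-convex in direction $e$. The cleanest way to join them is through the second difference quotient
$$
U_t(x) := u(x+te) + u(x-te) - 2u(x),
$$
which approximates $t^2 D_{ee}u(x)$ as $t\to 0^+$. The objective is $U_t \ge 0$ on $B_{1/2}$ for every small $t>0$; then dividing by $t^2$ and letting $t\to 0^+$ yields the conclusion (noting that $u \in C^{1,1}$ by Theorem \ref{C11}, so the pointwise $D_{ee}u$ is well-defined a.e.\ and the conclusion extends to every point of $B_{1/2}$).

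Step 1 produces convexity at a macroscopic scale: from $|u-p|\le\eps$ and $D_{ee}p \ge C\eps$ one has the pointwise bound $U_t(x) \ge t^2 D_{ee}p - 4\eps \ge (Ct^2 - 4)\eps$, which is nonnegative for $t \ge 2/\sqrt{C}$; so for $C$ universally large (e.g.\ $C\ge 64$) we get $U_t \ge 0$ on $\overline{B_{1/2}}$ for every $t\in[1/4,1/2]$. Step 2 descends to smaller $t$ by a continuity-plus-maximum-principle argument. On $\Omega_t := \{u>0\} \cap \{u(\cdot+te)>0\} \cap \{u(\cdot-te)>0\}$, the convexity of $F$ gives $F\bigl(D^2 \tfrac{u^++u^-}{2}\bigr) \le 1 = F(D^2 u)$, so \eqref{CompareWithLinearizedEquation} yields $L_u U_t \le 0$; on $\{u=0\}$ one has $U_t = u(\cdot+te)+u(\cdot-te) \ge 0$ trivially. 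Setting $t^* := \inf\{t_0>0 : U_t \ge 0 \text{ on } \overline{B_{1/2}} \text{ for all } t\in[t_0,1/4]\}$, I would argue by contradiction: if $t^*>0$, then continuity in $t$ yields a touching point $x^*\in\overline{B_{1/2}}$ with $U_{t^*}(x^*) = 0$. The strong maximum principle excludes $x^*\in\Omega_{t^*}$; the boundary case $\{u=0\}$ is excluded by nonnegativity of $u(\cdot\pm t^*e)$ at $x^*$ together with Remark \ref{r1}; and $x^*\in\partial B_{1/2}$ is excluded by Step 1 for the range of $t^*$ under consideration.

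The main obstacle is the remaining boundary component $\partial\Omega_{t^*} \cap \{u>0,\,u(\cdot\pm t^*e)=0\}$, where the supersolution property of $U_{t^*}$ is lost and, say, $U_{t^*} = u(\cdot-t^*e) - 2u$ is not manifestly nonnegative. Ruling out a touching point $x^*$ on this set is where the obstacle structure enters: if $u(x^*)>0$ and $u(x^*+t^*e)=0$, then $|u-p|\le\eps$ forces $p(x^*+t^*e)\le\eps$, which together with $D_{ee}p \ge C\eps$ pins $\nabla p(x^*)\cdot e$ to a value incompatible with the simultaneous constraints $u(x^*-t^*e) \ge 0$ and $U_{t^*}(x^*)=0$. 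Equivalently, one may apply the maximum principle directly to $w := D_{ee}u$ in $\{u>0\}\cap B_{1/2}$, using Proposition \ref{EstimateForDifference} rescaled in balls $B_r(x_0)\subset\{u>0\}$ to get $w \ge 0$ away from $\partial\{u>0\}$, and Theorem \ref{C11} to obtain $w \ge 0$ as a one-sided limit at free-boundary points; the delicate remaining step is then to control $w$ on $\partial B_{1/2}$ near its intersection with $\partial\{u>0\}$, which is the shape the difficulty takes in this alternative route.
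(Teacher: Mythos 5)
Your Step 1 and the supersolution property $L_u U_t\le 0$ on $\Omega_t$ are both correct, and they correctly isolate where the hypothesis $D_{ee}p\ge C\eps$ enters: it makes the second difference nonnegative at the fixed scale $t\ge 2/\sqrt C$. The paper itself does not reprove this lemma (it only cites Lemma 3.4 of \cite{SY}), so the issue is whether your sliding argument closes on its own terms, and it does not: two of the cases you must exclude at the touching time $t^*$ are not excluded when $t^*$ is small. First, $x^*\in\partial B_{1/2}$: Step 1 controls $U_t$ on $\partial B_{1/2}$ only for $t\in[1/4,1/2]$, while for $t^*<1/4$ the only available bound there is $U_{t^*}\ge (C(t^*)^2-4)\eps$, which is negative; nothing prevents the minimum from sitting on $\partial B_{1/2}$ and the infimum $t^*$ from being positive. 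Second, the mixed boundary case $u(x^*)>0$, $u(x^*+t^*e)=0$: the constraints you invoke are not incompatible --- $U_{t^*}(x^*)=0$ simply says $u(x^*-t^*e)=2u(x^*)>0$ --- and the polynomial computation you gesture at ($p(x^*+t^*e)\le\eps$, $p\ge-\eps$ on $B_1$, $D_{ee}p\ge C\eps$, hence $C(t^*)^2\le 4$) yields a contradiction only when $t^*\ge 2/\sqrt C$, i.e.\ precisely the regime Step 1 already covers. So the descent stalls at the scale $C^{-1/2}$ and never reaches $t\to 0$. (A smaller point: at an interior touching point the strong minimum principle gives $U_{t^*}\equiv 0$ on a component of $\Omega_{t^*}\cap B_{1/2}$, which is not by itself a contradiction; you would still have to propagate to that component's boundary, returning you to the problematic cases.)

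What is missing is the mechanism for small $t$, and it is not a pointwise touching argument. The hypotheses contain two quantitative facts your proof never uses: the a priori bound $D^2u\ge -c_0\eps\, I$ built into Definition \ref{ApproxClass}, and the geometric fact that $\{u=0\}\subset\{p\le\eps\}$ together with $D_{ee}p\ge C\eps$ and $p\ge -\eps$ forces every line in the direction $e$ to meet $\{u=0\}$ in a set of diameter at most $4/\sqrt C$. Consequently $w:=D_{ee}u+c_0\eps$ is a nonnegative supersolution of $L_u$ in $\{u>0\}$, and the region where $D_{ee}u$ could be negative is confined near a set that is thin in the $e$-direction and can be made of arbitrarily small measure by taking $C$ large. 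A weak Harnack / measure-theoretic propagation in the spirit of Lemma \ref{Monotonicity1} (which is exactly the first-derivative analogue, Lemma 3.2 of \cite{SY}) then upgrades $D_{ee}u\ge -c_0\eps$ to $D_{ee}u\ge 0$ in $B_{1/2}$. This is the step that both variants of your proposal explicitly leave open, and it is the actual content of the lemma.
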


\begin{rem}\label{Monotonicity2}
The hypothesis $D_e u \ge \sigma \eps$ in $B_r\cap\{|x_1|\ge\eta\}$ in Lemma \ref{Monotonicity1} can be relaxed to 
$$D_e u\ge 0 \quad \mbox{in $B_r\cap\{|x_1|\ge\eta\}$, and} \quad D_e u \ge \sigma \eps \quad \mbox{in $B_r\cap \{u \ge c r^2\}$,}$$
for some $c$ small, universal (see Lemma 3.3 in \cite{SY}). 
\end{rem}

\section{A refinement of Lemma \ref{QuadraticApproxLowerStratum}}\label{kg2}

In this section, we refine Lemma \ref{QuadraticApproxLowerStratum}. The proof follows the lines of the one in \cite{SY}, with a few modifications towards the end. For convenience of the reader, we reproduce the whole argument, leaving out only  technical points that are identical with the ones in \cite{SY}.  

Since the solution $u$ is $\eps$-approximated by $p$ in $B_1$ with $\lambda_2(D^2p) \gg \eps$, the coincidence set $\{u=0\}$ concentrates around a subspace of codimension at least 2. As a result, the coincidence set has small capacity, and we expect to approximate $u$ by the solution to the following unconstrained problem.  

For $0<r<1$, let $h_r$ be the solution to the following:
\begin{equation}\label{hrdef}
\begin{cases}
F(D^2h_r)=1 &\text{ in $B_r$,}\\ h_r=u &\text{ on $\partial B_r$,}
\end{cases}
\end{equation}
and denote its Hessian at $0$ by
\begin{equation}\label{Ar}
A_r= D^2h_r(0).
\end{equation}

With these notation, Lemma  \ref{QuadraticApproxLowerStratum} can be refined to

\begin{lem} \label{L1}
Suppose $$u\in\SPE \quad \mbox{ with $0\in\Sigma$ and $p\in\UQua.$}$$
Given any $\beta\in(0,1)$,
there are constants $\kappa_0$, $C$ large, $\bar{\eps}$, $c_1$, $\rho$ small, depending on $\beta$ and the universal constants, such that if $\eps<\bar{\eps}$  and  
\begin{equation}\label{l2}
\lambda_2(D^2p)\ge\kappa_0\eps,
\end{equation} 
then $$u\in \mathcal{S}(p',\eps',\rho) $$
for some $p'\in\UQua$ with $$|D^2p'-A_1|\le C\eps,$$ and 
\begin{enumerate}
\item{ if $|A_1^-| \le c_1 \eps $  then $$\eps'=\eps \rho^\beta.$$}
\item {if $|A_1^-| \ge c_1 \eps $ then 
$$\eps' \le\eps-\eps^\mu \quad \mbox{ and} \quad |A_\rho^-| \ge c_1 \eps'$$ 
for some universal constant $\mu>1$.}
\end{enumerate}

Moreover, in both cases, we have
$$|A_1 - A_\rho| \le C(\eps-\eps').$$
\end{lem}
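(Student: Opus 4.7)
My plan is to revisit and carefully quantify the argument in \cite{SY}, tracking the Hessian $A_1=D^2h(0)$ at each step. Choosing $\kappa_0\ge\kappa_\eta$ so that Lemma \ref{OutsideCylinder} applies, I would get $\|u-h\|_{C^2(B_{1/2}\setminus\mathcal{C}_\eta)}\le\eta\eps$ for an auxiliary small parameter $\eta$ to be fixed later. Since $|u-p|\le\eps$ in $B_1$ and $h=u$ on $\partial B_1$, the maximum principle applied to $h-p$ gives $\|h-p\|_{L^\infty(B_1)}\le C\eps$, and \eqref{v-p} then upgrades this to $\|h-p\|_{C^{3,\alpha}(B_{1/2})}\le C\eps$. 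In particular $|A_1-D^2p|\le C\eps$, so $|A_1^-|\le C\eps$, and crucially $|D^3h|\le C\eps$ on $B_{1/2}$. Using that $0\in\Sigma$ (so $u$ admits no cone of monotone directions near $0$, by Remark \ref{r1}) together with Lemma \ref{Monotonicity1} and Remark \ref{Monotonicity2} applied to $D_eu$ for $e$ transversal to the cylinder axis, one concludes $|h(0)|+|\nabla h(0)|\le\omega(\eta)\eps$ with $\omega(\eta)\to 0$ as $\eta\to 0$. Taylor expansion at the origin then gives
\begin{equation*}
h(x)=h(0)+\nabla h(0)\cdot x+\frac{1}{2}x^TA_1x+R(x),\qquad |R(x)|\le C\eps|x|^3.
\end{equation*}

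Next I would construct $p'\in\UQua$ by setting $\tilde A_1:=A_1^+-\delta I$ with $\delta\ge 0$ chosen so that $F(\tilde A_1)=1$; since $F(A_1)=1$ and $F$ is monotone, $\delta\le C|A_1^-|$, hence $\tilde A_1\ge 0$ and $|\tilde A_1-A_1|\le C|A_1^-|\le C\eps$. Setting $p'(x):=\nabla h(0)\cdot x+\frac{1}{2}x^T\tilde A_1 x$ yields a polynomial in $\UQua$ with $|D^2p'-A_1|\le C\eps$. In case (1), where $|A_1^-|\le c_1\eps$, the Taylor remainder and the bound $|\tilde A_1-A_1|\le Cc_1\eps$ give $|h-p'|\le C\eps|x|^3+Cc_1\eps|x|^2$ in $B_\rho$. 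Choosing $\rho$ so that $C\rho\le\frac{1}{4}\rho^\beta$ (possible since $\beta<1$), then $c_1\le\rho^\beta/(4C)$, and finally $\eta\le\rho^{2+\beta}/4$, and combining with $|u-h|\le\eta\eps$ outside $\mathcal{C}_\eta$ and a barrier argument inside $\mathcal{C}_\eta$ (using $h\le u$, $u\ge 0$, and the convexity of $p'$), yields $|u-p'|\le\eps\rho^{2+\beta}$ throughout $B_\rho$; the convexity \eqref{convexity} at scale $\rho$ follows from Lemma \ref{Convexity} since $D^2p'\ge 0$. Case (2), where $|A_1^-|\ge c_1\eps$, proceeds as in \cite{SY} to produce $\eps'\le\eps-\eps^\mu$ along with the key pointwise estimate $(u-h)(\frac{1}{2}\rho e_1)\le C(\eps-\eps')$.

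For the Hessian tracking $|A_1-A_\rho|\le C(\eps-\eps')$ in both cases, I would follow the reasoning behind \eqref{h'-h}: the maximum principle gives $h\le h_\rho\le u$ in $B_\rho$, and Harnack applied to the nonnegative solution $h_\rho-h$ of a linear elliptic equation gives $\|h_\rho-h\|_{L^\infty(B_{\rho/2})}\le C(h_\rho-h)(\frac{1}{2}\rho e_1)\le C(u-h)(\frac{1}{2}\rho e_1)$. In case (2) the right side is $\le C(\eps-\eps')$ by the above; in case (1) it is $\le C\eps\le C'(\eps-\eps')$ because $\eps-\eps'=\eps(1-\rho^\beta)\gtrsim\eps$ once $\rho$ is fixed. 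Proposition \ref{EstimateForDifference} applied at scale $\rho$ then converts this $L^\infty$ bound into $|A_1-A_\rho|\le C(\eps-\eps')$. Finally, in case (2), $|A_\rho^-|\ge|A_1^-|-|A_1-A_\rho|\ge c_1\eps-C(\eps-\eps')=c_1\eps'+(c_1-C)(\eps-\eps')\ge c_1\eps'$ provided the constants are arranged with $c_1\ge C$, which is compatible with the smallness of $c_1$ required in case (1) once all constants are fixed relative to $\beta$ and the universal ones.

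The main obstacle is achieving the rate $\rho^\beta$ for arbitrary $\beta<1$ in case (1): this requires the Taylor error of $h$ at $0$ to be controlled by $C\eps|x|^3$ (not just by the generic $C\eps|x|^{2+\alpha}$ coming from the $C^{2,\alpha}$ regularity), which is precisely why the sharper bound $|D^3h|\le C\eps$, obtained from the $C^{3,\alpha}$-closeness of $h$ to the quadratic $p$, is essential. The accompanying bookkeeping---reconciling the smallness of $c_1$ needed in case (1) with the lower bound on $c_1$ needed for the Hessian-propagation step of case (2)---is delicate but manageable once $\beta$, and hence $\rho$, is fixed.
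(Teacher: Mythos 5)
Your overall strategy coincides with the paper's: compare $u$ with the unconstrained solution $h$, use $0\in\Sigma$ to force tangency between $h$ and the obstacle, Taylor-expand $h$ using the $C^{3,\alpha}$ bound on $h-p$, and split according to the size of $|A_1^-|$. There are, however, two genuine gaps. The first concerns the convexity requirement in Definition \ref{ApproxClass}: membership in $\mathcal S(p',\eps',\rho)$ demands $D^2u\ge -c_0\eps' I$ in $B_\rho$, and in case (1) this means improving the hypothesis $D^2u\ge -c_0\eps I$ to $D^2u\ge -c_0\eps\rho^\beta I$. Your appeal to Lemma \ref{Convexity} does not do this: that lemma needs $D_{ee}p\ge C\eps$ for the given direction $e$ and only then concludes $D_{ee}u\ge 0$; it says nothing about directions in or near the kernel of $D^2p'$ (nonempty on every stratum $\Sigma^k$ with $k\ge1$), where you still only know $D_{ee}u\ge -c_0\eps$, far weaker than $-c_0\eps\rho^\beta$. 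The paper's Step 5 obtains the improvement by propagating $D^2(\hat h-\hat O)\ge-(a+C\rho)I$ from the origin to $B_\rho$, transferring it to $\hat u$ outside the thin cylinder $\mathcal C_\eta$ via Lemma \ref{OutsideCylinder}, and then applying the weak Harnack inequality to the supersolution $w=\frac1\eps u_{ee}^-$ to cross the cylinder. Some argument of this type is indispensable and is absent from your proof.

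Second, your closing inequality for case (2), $|A_\rho^-|\ge c_1\eps-C(\eps-\eps')\ge c_1\eps'$ ``provided $c_1\ge C$,'' cannot be arranged: $C$ is a Harnack constant at scale $\rho$ (large, blowing up as $\rho\to 0$), while $c_1=O(\rho^\beta)$ must be small. The paper instead bounds the perturbation by $|A_1-A_\rho|\le C(\rho)\,(u-h)(\tfrac12\rho e_1)\le C(\rho)\,\eta\,\eps$ and chooses the cylinder width $\eta$ \emph{after} $\rho$ and $c_1$, so that $C(\rho)\eta$ is as small as needed; it also exploits the relation $\eps'\sim\frac{2a}{c_0}\eps$ (or $\eps'\ge\eps/2$) to get the clean bound $|A_\rho^-|\ge\frac{c_0}{8}\eps'$. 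Two smaller points: the claim $|\nabla h(0)|\le\omega(\eta)\eps$ is false for the $x'$-components of the gradient (only the $x''$-components are controlled at order $\delta\eps$; this is harmless since the linear part is absorbed into $p'$), and the additive correction $\tilde A_1=A_1^+-\delta I$ need not be positive semidefinite when $A_1^+$ is degenerate, so one should use a multiplicative correction $(1-t\eps)A_1^+$ as in the paper. Finally, case (2) itself --- both $\eps'\le\eps-\eps^\mu$ and, more delicately, the convexity improvement $D^2u\ge-c_0\eps'I$ there, which rests on comparing the supersolution $D_{ee}u+c_0\eps$ with the subsolution $u-h$ for $L_u$ on $\{u>0\}$ --- is deferred entirely to \cite{SY} and would need to be supplied.
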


\begin{rem}\label{IndefinitelyApplicable}
 Beginning with an initial approximating $p_1$ with error $\eps_1$, Lemma \ref{L1} is applied to dyadic balls of radii $r=\rho^m$, and gives approximation of $u$ by a sequence of parabolas $p_m$ with decreasing errors $\eps_m$. By the estimates on $|D^2p'-A_1|$ and $|A_1-A_\rho|$, the difference between $D^2p_m$ and $D^2p$ is at most C$\eps_1$. Consequently, if we begin with $D^2p_1\ge\kappa\eps_1$ for some $\kappa\ge\kappa_0+C$, then condition \eqref{l2} holds for all $p_m$, and Lemma \ref{L1} can be applied inductively. 
\end{rem}

Lemma \ref{L1} is more precise than Lemma \ref{QuadraticApproxLowerStratum}. 

In case (1), the improvement of $\eps'$ is consistent with the $C^{2,\beta}$ scaling for $\beta$ arbitrarily close to 1. 
In case (2), the lower bound on $|A_{\rho}^-|$ shows that if this alternative happens at one scale $r$, then it happens for all finer scales $\rho^mr.$ 

If  alternative (2) happens, then  the solution $u$ cannot be $C^{2,\alpha}$ at the origin for any $\alpha>0$, and we say that $0$ is an anomalous point of $\Sigma$.

\begin{defi}\label{AN1}
We say that $x_0 \in \Sigma$ is \textit{anomalous} and write $$x_0 \in \Sigma_a$$ if 
$$ \|u - \frac 12 (x-x_0)^T D^2 u(x_0) (x-x_0)\|_{L^\infty(B_r(x_0))} \ge r ^{2+\alpha} $$
for any $\alpha>0$ and all $r$ small. 

We denote the complement of the anomalous part, the good part, by 
$$\Sigma_g:=\Sigma \setminus \Sigma_a.$$

For each $ k \le d-2$, we denote $$\Sigma^k_a:= \Sigma^k \cap \Sigma_a, \quad \quad \Sigma^k_g:=\Sigma^k \cap \Sigma_g.$$
\end{defi}  

Alternative (1) in Lemma \ref{L1} implies that the solution u is $C^{2,\beta}$ at all points in $\Sigma^k_g$ with $k\le d-2$. This gives the desired $C^{1,1-}$ covering of the good part. 

On the other hand, if $0\in\Sigma^k_a$, we have the following lemma, which states that   $\Sigma$ coincides with $\Sigma_a$ near $0$, and that $\Sigma^{k}_a$ concentrates near a $(k-1)$-dimensional space.
\begin{lem}\label{L11}
Under the assumptions in Lemma \ref{L1}, and assume that alternative (2) holds. Then $$\Sigma \cap B_\rho \subset \Sigma_a$$ and, after rescaling, alternative (2) holds in any ball $B_r(x_0)$ with $x_0 \in \Sigma \cap B_\rho$ and $r \le \rho$. 

Moreover, if 
\begin{equation}\label{l22}
\lambda_{d-k}(D^2p) \ge \kappa \eps \quad \quad \mbox{for some $k \le d-2$},
\end{equation}
then $\Sigma \cap B_\rho$ is in a $\sigma (\kappa)$-neighborhood of a subspace of dimension $k-1$, with $\sigma(\kappa) \to 0$ as $\kappa \to \infty$.
\end{lem}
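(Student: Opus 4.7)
The plan is to establish the two assertions in turn, with the first serving as a prerequisite for the second. The argument is a careful quantitative refinement of the iteration scheme of Lemma \ref{L1}.

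For the first assertion, the key observation is that the condition $|A_\rho^-| \ge c_1 \eps'$ in alternative (2) is precisely what forces alternative (2) at the next dyadic scale. Indeed, rescaling via $\tilde u(y) = \rho^{-2} u(\rho y)$, the new Hessian $\tilde A_1$ at the origin equals the old $A_\rho$, so the dichotomy in Lemma \ref{L1} must again fall into alternative (2). Iterating at $0$ produces polynomials $p_m$ approximating $u$ in $B_{\rho^m}$ with errors $\eps_m$, always in alternative (2), so $0 \in \Sigma_a$. For a nearby singular point $x_0 \in \Sigma \cap B_\rho$, I would pick a dyadic scale $\rho^m$ comparable to $|x_0|$, Taylor-expand $p_m$ at $x_0$, and use $u(x_0) = 0$ together with interior $C^1$ estimates for $u - p_m$ (which give $|\nabla p_m(x_0)| = O(\eps_m \rho^m)$) to produce a rescaled approximation around $x_0$ satisfying the hypotheses of Lemma \ref{L1}; the $\lambda_2$-condition is preserved by Remark \ref{IndefinitelyApplicable}. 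The negative part of the corresponding unconstrained Hessian at $x_0$ is inherited from the one at $0$ via the comparison $u \le H \le h_{\rho^m}$ and interior $C^{2,\alpha}$ estimates, placing us in alternative (2) at $x_0$ at some small dyadic scale. Further iteration then gives alternative (2) at every smaller scale around $x_0$, so $x_0 \in \Sigma_a$.

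For the dimension bound, the natural candidate $(k-1)$-subspace is $W := V^\perp \cap e^\perp$, where $V$ is the span of the top $d-k$ eigenvectors of $D^2 p$ and $e$ is a unit eigenvector of $A_1$ corresponding to its negative eigenvalue. First, using $\nabla u(y) = 0$ together with $C^1$ estimates on $u - p'$ yields $|\nabla p'(y)| = O(\eps' \rho)$; combining this with $A'|_V \ge (\kappa - C)\eps'$ and $|b'| = O(\sqrt{\eps'}\rho)$ (which follows from $u(0) = 0$, $\nabla u(0) = 0$, and $u \ge 0$) gives the projection bound $|y_V| \le \sigma_1(\kappa)\rho$ with $\sigma_1(\kappa) \to 0$ as $\kappa \to \infty$. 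Second, $D^2 p' \ge 0$ together with $|D^2 p' - A_1| \le C\eps$ and $e^T A_1 e \le -c_1 \eps$ force $e$ to have $D^2 p'$-eigenvalue in $[0, C\eps]$, so $e$ lies approximately in $V^\perp$ and $W$ is genuinely $(k-1)$-dimensional.

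The remaining and hardest step is bounding the $e$-component of $y$. The geometric intuition is that $h_\rho$ is strictly concave in direction $e$ at $0$ but must remain $\ge u \ge 0$, so the concavity cannot persist over long distances. Quantifying this is delicate because the $\eps$-scale concavity $-\tfrac12 c_1\eps' t^2$ is dominated by the generic cubic remainder $O(t^3/\rho)$ on scales $t \gg \eps' \rho$, so a naive Taylor argument along a single ray fails. My plan is to invoke Part 1 to obtain alternative (2) at every singular point in $B_\rho$, with approximate negative direction stable under translation via $|A_{\rho^m} - A_1| \le C\eps_1$, and then to iterate the concavity along chains of singular points in direction $e$, using the scale-by-scale propagation of the negative direction to accumulate a genuine concavity on scale $\rho$. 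This should yield $|y \cdot e| \le \sigma_2(\kappa)\rho$ with $\sigma_2(\kappa) \to 0$, which combined with the $y_V$ bound places $\Sigma \cap B_\rho$ in a $\sigma(\kappa)$-neighborhood of $W$. I expect this iterated concavity argument, together with the rigorous transfer of alternative (2) between base points required for Part 1, to be the main technical obstacle, since no monotonicity formula is available to shortcut the accumulation of the negative direction across scales.
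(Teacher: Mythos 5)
Your treatment of the first assertion is essentially the paper's: the lower bound $|A_\rho^-|\ge c_1\eps'$ is exactly what re-triggers alternative (2) after rescaling (cf. Remarks \ref{r61}--\ref{64}), and the transfer to nearby points $x_0\in\Sigma\cap B_\rho$ goes through the stability $|A_\rho(x_0)-A_1|\le C(u-h)(\tfrac12\rho e_1)\le C\eta\eps$ as in \eqref{ar1}, \eqref{Ar22}. One step you gloss over: being in alternative (2) at every scale does not by itself give $x_0\in\Sigma_a$ in the sense of Definition \ref{AN1}; you still need a lower bound on the approximation error by \emph{every} quadratic polynomial. The paper gets this from $D^2_{\xi\xi}h\le -c_2(\beta)\eps$ in $B_\rho$ together with $u(0)=0$, $u\ge 0$, which forces $\|u-q\|_{L^\infty(B_\rho)}\ge c\eps$ for all quadratics $q$, and then from $\eps_{m+1}\ge\frac14\eps_m$ plus resetting $\rho$ smaller as $\eps_m\to0$.

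The genuine gap is in your second part. Your premise that ``the $\eps$-scale concavity $-\frac12 c_1\eps' t^2$ is dominated by the generic cubic remainder $O(t^3/\rho)$'' is false: by \eqref{v-p} (equivalently \eqref{hC2}), $\|h-p\|_{C^{3,\alpha}(B_{3/4})}\le C\|h-p\|_{L^\infty(B_1)}\le C\eps$, so $|D^3h|\le C\eps$, and the cubic remainder is $O(\eps|t|^3)$, not $O(|t|^3)$. In the normalized variables $\hat h=(h-p)/\eps$, $\hat O=-p/\eps$, one has $D^2_{\xi\xi}(\hat h-\hat O)(0)\le -\frac12 c_1(\beta)$ with $|D^3(\hat h-\hat O)|\le C$ universal, so a single Taylor expansion from $(\minx,0)$ along the direction $\xi=e_{k+1}$ (using Steps 1--2 to control the zeroth and first order terms by $C\delta$) gives $(\hat h-\hat O)((\minx,0)+t\xi)\le C\delta-c(\beta)t^2$ for all $|t|\le\rho$. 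Once $t\ge C(\beta)\delta^{1/2}$ this is below $-\delta$, and Remark \ref{r22} (the barrier-plus-convexity argument of Step 2, not the comparison $u\ge h$ you invoke — note also your inequality $u\le h_{\rho^m}$ is reversed) excludes singular points in those $x''$-slices. So the ``naive'' single-ray argument you reject is precisely the proof, and the iterated concavity along chains of singular points that you flag as the main technical obstacle is unnecessary and, as stated, not substantiated.
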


Suppose that $0\in\Sigma^k$ for some $k\le d-2$, then the unique blow-up limit, say, $p_1$, satisfies $$\lambda_1(D^2p_1)\ge\lambda_2(D^2p_1)\ge\dots\ge\lambda_{d-k}(D^2p_1)>0.$$Consequently, after an initial rescaling, we can assume $u\in\mathcal{S}(p_1,\eps_1,1)$ for a small $\eps_1$ satisfying $\lambda_{d-k}(D^2p_1)\ge 2\kappa_0\eps_1.$ By the same reasoning as in Remark \ref{IndefinitelyApplicable}, $\lambda_{d-k}(D^2p_m)\ge 2\kappa_0\eps_1$ for all subsequent approximating parabolas $p_m$. On the other hand, the approximating errors $\eps_m\to 0$. This implies $$\lambda_{d-k}(D^2p_m)/\eps_m\to\infty.$$ If alternative (2) as in Lemma \ref{L1} ever happens, then Lemma \ref{l22} implies that at smaller and smaller scales, $\Sigma$ is trapped in a neighborhood of a $(k-1)$-dimensional subspace with vanishing width. 
Standard covering arguments imply that $\Sigma^k_a$ has Haussdorff dimension $k-1$. In particular for $k=0$ we have $\Sigma^0_a=\emptyset$, and for $k=1$, $\Sigma^1_a$ consists of isolated points inside $\Sigma^1$.

If we start with the situation of Lemma \ref{L1} with, say $\beta=1/2$, then after a few iterations of the lemma we may apply the estimates of the Lemma \ref{L1} for a different value of $\beta$ much closer to 1, and so on.

We summarize these  in the following theorem, which gives part a) of our main result Theorem \ref{T0}.

\begin{thm}\label{T1}
Assume that $\eps \le \bar \eps_0$, 
$$u\in\SPE \quad \mbox{and} \quad \lambda_2(D^2p)\ge \bar{\kappa}_0 \eps,$$
with $\bar{\eps}_0$ small, $\bar{\kappa}_0$ large universal constants. Then in $B_{1/4}$ we have

a) $\Sigma = \cup_{k \le d-2} \Sigma^k$ and $u$ is $C^{2,\log ^c}$ on $\Sigma$: 

for each $x_0 \in \Sigma$, $\exists \, q_{x_0}$, a quadratic polynomial, with $|D^2 q_{x_0} - D^2 p| \le C \eps$ and
$$|u-q_{x_0}|(x)\le C|x-x_0|^2|\log|x-x_0||^{-c}.$$

b) $u$ is uniform $C^{2,\beta}$ for any $\beta<1$ on the non-anomalous set $\Sigma_g$: 
$$|u-q_{x_0}|(x)\le C(\beta) \eps |x-x_0|^{2+\beta} \quad \quad \mbox{if} \quad x_0 \in \Sigma_g.$$

c) The anomalous set $\Sigma_a^k$ is open in $\Sigma^k$ and has Haussdorff dimension $k-1$. In particular, $\Sigma^0_a=\emptyset$, and $\Sigma_a^1$ is discrete locally (can have accumulation points on $\Sigma$).

\end{thm}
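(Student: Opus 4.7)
The plan is to apply Lemma \ref{L1} iteratively on dyadic balls centered at each $x_0 \in \Sigma \cap B_{1/4}$. Choosing $\bar{\kappa}_0 = \kappa_0 + C$ large enough and $\bar{\eps}_0 \le \bar{\eps}$ small, Remark \ref{IndefinitelyApplicable} guarantees that the hypothesis $\lambda_2(D^2 p_m) \ge \kappa_0 \eps_m$ is preserved along the resulting sequence of quadratic approximations $u \in \mathcal{S}(p_m, \eps_m, \rho^m)$. Since $\lambda_2(D^2 p_m) > 0$ throughout the iteration, the rank of the limiting Hessian is at least $2$, so the unique blow-up at $x_0$ has kernel of dimension at most $d-2$; this gives the stratification part of (a).

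Convergence of the Hessians follows from a telescoping bound $|D^2 p_{m+1} - D^2 p_m| \le C(\eps_m - \eps_{m+1})$ obtained by combining the three estimates $|D^2 p_m - A_{\rho^{m-1}}| \le C \eps_m$, $|D^2 p_{m+1} - A_{\rho^m}| \le C \eps_{m+1}$, and $|A_{\rho^{m-1}} - A_{\rho^m}| \le C(\eps_m - \eps_{m+1})$ supplied by Lemma \ref{L1} at scale $\rho^{m-1}$. Hence $D^2 q_{x_0} := \lim_m D^2 p_m$ exists and $|D^2 q_{x_0} - D^2 p| \le C\eps$. The decay of $\eps_m$ then splits into two regimes. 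If alternative (2) of Lemma \ref{L1} is triggered at some step $m_0$, then by Lemma \ref{L11} it persists for every subsequent scale; iterating $\eps_{m+1} \le \eps_m - \eps_m^\mu$ gives $\eps_m \le C m^{-1/(\mu-1)}$, which translates, via $m \sim |\log r|$, to the pointwise $C^{2,\log^c}$ modulus in $r = \rho^m$ claimed in (a). If, instead, alternative (1) holds at every step (i.e.\ $x_0 \in \Sigma_g$), then $\eps_m \le \eps \rho^{m\beta}$ for any preset $\beta < 1$; to reach $\beta$ arbitrarily close to $1$ one first runs the iteration with a fixed small $\beta$ until the error is sufficiently small, then bootstraps upward, as indicated in the discussion following Lemma \ref{L11}. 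Summing the geometric series yields the uniform $C^{2,\beta}$ estimate (b).

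For (c), Lemma \ref{L11} directly yields that whenever alternative (2) is triggered at $x_0$, the whole of $\Sigma \cap B_{\rho^{m_0}}(x_0)$ consists of anomalous points, so $\Sigma_a^k$ is relatively open in $\Sigma^k$. For the dimension bound, the key observation is that along the iteration at an anomalous $x_0 \in \Sigma^k$, the ratio $\lambda_{d-k}(D^2 p_m)/\eps_m$ diverges, since $\lambda_{d-k}(D^2 p_m)$ is bounded below by the initial gap while $\eps_m \to 0$. The quantitative trapping in Lemma \ref{L11} then shows that at each scale $r_m = \rho^m$, the set $\Sigma \cap B_{r_m}(x_0)$ lies in a strip of width $\sigma(\kappa_m) r_m$ around a $(k-1)$-dimensional affine subspace, with $\sigma(\kappa_m) \to 0$. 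A standard Whitney/Federer-type covering and projection argument on these nested strips then gives Hausdorff dimension at most $k-1$ for $\Sigma_a^k$. In particular, $\Sigma_a^0 = \emptyset$ and $\Sigma_a^1$ is locally discrete in $\Sigma^1$ (with possible accumulation on the higher strata).

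The main obstacles are twofold. First, one must make sure that the Hessian sequence $D^2 p_m$ stays within the regime where Lemma \ref{L1} can be reapplied; this hinges on the telescoping estimate above together with the largeness of $\bar{\kappa}_0$, and the $\beta$-bootstrap in the good case must be inserted carefully so that the constant $C(\beta)$ in (b) is consistent across rescalings. Second, and more essential, the passage from the vanishing-width trapping in Lemma \ref{L11} to an actual Hausdorff dimension bound on $\Sigma_a^k$ requires a nested covering argument along the sequence of scales $r_m = \rho^m$, since in this nonlinear setting no monotonicity formula is available to deliver the dimension reduction directly.
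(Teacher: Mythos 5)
Your proposal is correct and is essentially the paper's own proof: Theorem \ref{T1} is obtained by iterating Lemma \ref{L1} on dyadic balls (as in Remark \ref{IndefinitelyApplicable}), with Lemma \ref{L11} supplying the persistence of alternative (2), the openness of $\Sigma_a^k$, and the strip-trapping that feeds the covering argument for the dimension bound. One small imprecision worth fixing: the three estimates you cite give $|D^2p_{m+1}-D^2p_m|\le C\eps_m$ rather than $C(\eps_m-\eps_{m+1})$, and $\sum_m \eps_m$ need not converge in the anomalous regime (there $\eps_m$ decays only like a negative power of $m$); the convergence of the Hessians should instead be deduced from the Cauchy property of the sequence $A_{\rho^m}$ (whose increments $|A_{\rho^{m-1}}-A_{\rho^m}|\le C(\eps_m-\eps_{m+1})$ genuinely telescope to $C\eps_1$) together with $|D^2p_{m+1}-A_{\rho^m}|\le C\eps_m\to 0$.
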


Theorem \ref{T1} follows directly from Lemmas \ref{L1} and 
\ref{L11} and the discussion above. The rest of the section is devoted towards the proof of Lemma \ref{L1}. By examining this proof, we deduce Lemma \ref{L11}. 

Let us assume $\lambda_k(D^2p) \gg \eps \text{ for some $k\ge 2$,}$ and that $D^2p$ has ordered eigenvalues on the diagonal. Then the coincidence set  $\{u=0\}$ concentrates around the set
\begin{equation}\label{n-k}
\{x'=0 \in \R^k\}, \quad \mbox{where} \quad x':=(x_1,..,x_k), \quad x'':=(x_{k+1},...,x_d).
\end{equation} 

For simplicity of notation we denote $h_1$ by $h$.

The normalized error $$\hat u=\frac 1 \eps (u-p)$$ solves an obstacle problem with the obstacle $$\hat O=-\frac 1 \eps p.$$ Since the capacity of the coincidence set converges to 
$0$ as $\eps \to 0$, we expect to approximate $\hu$ by  $$\hat h := \frac 1 \eps (h-p).$$

By Lemma \ref{OutsideCylinder}, this is true outside a small tubular neighborhood $C_\eta$ around the $x''$-subspace.
Inside $C_\eta$, however,  the difference between $\hat h$ and $\hat u$ could be of order 1, and $\hat u$ might not have a uniform modulus of continuity as $\eps \to 0$. 

Heuristically, as $\eps \to 0$, we end up with limiting functions $\bar u$, $\bar O$ and $\bar h$, that satisfy that $|\bar h|$, $|\bar u|$ and $\max \bar O$ are all bounded by 1 in $B_1$, and

\noindent 1) $\bar h$ is a solution to a constant coefficient elliptic equation, 

\noindent 2) the obstacle $\bar O$ is a concave quadratic polynomial supported on the $x''$-subspace, extended to $-\infty$ outside its support, 

\noindent 3) $\bar u=\max\{\bar h, \bar O \}$, which can be discontinuous.

The improved quadratic error for $\hat u$ follows from the $C^{3}$ estimate of $\bar h$ at the origin. However, first we need to establish that $0 \in \Sigma$ essentially implies that $\bar h$ and $\bar O$ are tangent of order 1 at the origin in the $x''$ direction. The dichotomy in Lemma \ref{L1} depends on whether or not $\bar O$ separates (quadratically) on top of $\bar h$ in this direction.

We now give the proof of the main result in this section:
\begin{proof}[Proof of Lemma \ref{L1}]

As discussed above, we define the normalizations $$\hat{u}=\frac{1}{\eps}(u-p),  \text{ }\hat{h}=\frac{1}{\eps}(h-p)  \text{ and }\hat{O}=\frac{1}{\eps}(0-p).$$
Then in $B_1$, we have $$-1\le\hat{h}\le\hat{u}\le 1, \quad \quad \hat u(0)=\hat O(0)=0,$$
and \eqref{v-p} implies \begin{equation}\label{hC2}\|\hat{h}\|_{C^{3,\alpha}(B_{3/4})}\le C\end{equation} for some universal constant $C$. 
By Lemma \ref{OutsideCylinder}, for any small parameter $\eta>0$ we have
\begin{equation}\label{OutsideCylinder2}
\|\hat u- \hat h\|_{C^2(B_{1/2}\backslash\mathcal{C}_{\eta})}\le\eta, \quad \quad \quad \mathcal{C}_\eta=\{|(x_1,x_2)|\le\eta\}.
\end{equation}
provided that $\kappa_0 \ge \kappa_\eta$ and $\bar \eps \le \eps_\eta$, with $\kappa_\eta$, $\eps_\eta$ depending on universal constants and $\eta$.

We follow the proof of Lemma 5.1 in \cite{SY}, which consists of 6 Steps. The differences appear only in Steps 5 and 6,  where we choose various parameters depending on  $\beta$. For the convenience of the reader, we provide the full argument, with some parts in Steps 1-3 being only sketched.

Before we proceed, we give the outline of the 6 steps. 

We decompose the space $x=(x',x'')$ according to  the curvatures of the obstacle $\hat O$. The curvatures are very negative along the directions in the $x'$-subspace, and are uniformly bounded in the $x''$-subspace. In Steps 1-2 we show that $\hat h$ and $\hat O$ are tangent in the $x''$ direction at the origin up to an arbitrarily small error $\delta$, and deduce that $\hat O$ can only separate quadratically on top of $\hat h$ near the origin. In Step 3, we use barriers to show that the same is true for $\hat u$. In Step 4, we use the $C^{3,\alpha}$ estimate for $\hat h$ to approximate $u$ quadratically in $B_\rho$ by a polynomial $p' \in \UQua$. The lower bound for $D^2 u$ in $B_\rho$, $D^2 u \ge - c_0 \eps' I$ (see \eqref{convexity} in Definition \ref{ApproxClass}) and the choice of $\eps'$ are given in Steps 5 and 6, according to whether the obstacle $\hat O$ separates quadratically on top of $\hat h$ along some direction in the $x''$ subspace. This leads to our dichotomy.

Throughout this proof, there are several parameters $\delta$, $\eta$, $\rho$ to be fixed in the end, depending on $\beta$ and universal constants.
First we will specify the radius $\rho\in(0,1/2)$, and then the parameter $\delta>0$ which can be made arbitrarily small. The parameter $\eta$ from Lemma \ref{OutsideCylinder} that allows us to make $\hat{u}$ and $\hat{h}$ very close to each other, will be chosen to depend on $\delta$. This $\eta$ imposes the choice of $\kappa_0=\kappa_\eta$ as in Lemma \ref{OutsideCylinder}. The parameter $\bar{\eps}$ is chosen after all these. 

Up to a rotation, $p$ takes the form $$p(x)=\frac{1}{2}\sum a_jx_j^2+\sum b_jx_j$$ with $$a_1\ge a_2\ge\dots\ge a_d\ge 0, \quad a_2\ge\kappa_0\eps,$$ and $F(D^2p)=1.$


We introduce some notations. For $\delta$ small to be chosen,  let $k\in\{1,2,\dots, d\}$ be such that 
\begin{equation}\label{a_k}
a_k\ge 2\delta^{-4}\eps>a_{k+1}.
\end{equation} 
Then we decompose the entire space $\R^d$ as $x=(x',x'')$, where $$x'=(x_1,x_2,\dots,x_{k})\text{ and }x''=(x_{k+1},x_{k+2},\dots, x_d).$$

The obstacle $\hat O$ is changing rapidly in the $x'$ direction, and we denote by $\minx$ the point in this direction where its maximum is achieved. This is the same as the minimum point for $p$ in the $x'$ direction. 

Precisely, let $\underline{x}'$ be the minimum point of $x'\mapsto p(x',0)$. Then by \eqref{a_k}, we have  
 $$|\underline{x}'|\le\delta^2, \quad \quad \mbox{and} \quad -\eps\le p(\underline{x}',0)\le0.$$
 
 We write $p$ as the sum of two quadratic polynomials in the $x'$ and $x''$ variables   
 \begin{equation}\label{pdec}
 p(x',x'')=p_1(x'-\minx) - p_1(\minx)+p(0,x''),
 \end{equation} 
 where $p_1 \ge 0 $ is a $2$-homogeneous polynomial
 $$p_1(x')=\frac{1}{2}\sum_{j\le k}a_jx_j^2.$$ 
The obstacle $\hat O$ satisfies
\begin{equation}\label{Oh}
|\nabla_{x''}\hat O|, |D_{x''}^2 \hat O| \, \le C_\delta, \quad \hat O ((\minx,0)) \ge 0.
\end{equation}

 \
 
\textit{Step 1: If $\eta$ and $\eps$ are small depending on $\delta$, then }
\begin{equation}\label{step1}
|\nabla_{x''}(\hat{h}-\hat{O})(0)|<\delta.
\end{equation}

The conclusion can be rewritten as $|\nabla_{x''}h(0)| \le \delta \eps$, and it implies $|b_i| \le C$ if $i >k$.
The idea is to show that otherwise $u$ is monotone in a cone of directions near the $x''$ subspace, and we contradict $0 \in \Sigma$. We sketch the argument.

Suppose  there is $i>k$ such that $D_i(\hh-\hat{O})(0)>\delta.$ 
This estimate can be extended to $D_i(\hh-\hat{O})\ge \frac{1}{2}\delta$ in $B_{r}(0)$ for some $r>0$ depending only on $\delta$. 
By \eqref{OutsideCylinder2}, $D_i(\hu-\hat{O}) \ge \frac{1}{4}\delta$ or equivalently,  $D_i u \ge \frac 1 4 \delta \eps$, in $B_{r}(0)$ outside a strip of width $\eta$. Inside this strip $|D_i u| \le 2 C_\delta \eps$ which is a consequence of $D^2 u \ge -c_0 \eps I$ and \eqref{Oh}.
Now Lemma \ref{Monotonicity1} gives $D_iu\ge 0 \text{ in $B_{r/2}$}$, and by continuity $D_e u \ge 0$ for all unit directions $e$ close to $e_i$.
Thus $\{u=0\}$ contains a cone of positive opening with vertex at $0$, which means $0 \in Reg$, a contradiction.

\

\textit{Step 2: If $\eta$ and $\eps$ are small depending on $\delta$, then} 
\begin{equation}\label{step2}
|\hat{h}(\minx,0)-\hat{O}(\minx,0)|<\delta.
\end{equation}

Note that $\hh(0)\le\hu(0)=0$ and $\hat{O}(\minx,0)\ge 0,$ and then the upper bound for $\hat h - \hat O$ at $(\minx,0)$ follows from $|\minx|\le\delta^2$ and \eqref{hC2}. 

In order to establish the lower bound we prove that if $\hat{h}(\minx,0)-\hat{O}(\minx,0)<-\delta$, then $0 \in Reg$. We sketch the argument.

Using \eqref{OutsideCylinder2} together with the fact that $\hat O$ decays fast in the $x'$ direction while has controlled growth in the $x''$ direction (see \eqref{pdec}), one can show by constructing an explicit upper barrier that $\hat u = \hat O$ (or equivalently $u=0$) at $(\minx, 0)$. By continuity this can be extended to 
$B_{r}(\minx,0) \subset \{u=0\}$ for a small $r>0,$ possibly depending on $\eps$. 

Since $a_j>2\delta^{-4}\eps$ for $j\le k$, we can apply Lemma \ref{Convexity} to get $\Dee u\ge 0$ in $B_{1/2}$ for all unit directions $e$ in a small open cone around the subspace $\{(x',x'')|x''=0\}.$ Using that $u(0)=0$ we conclude that $\{u=0\}$ contains a cone with positive opening and vertex at $0$, hence $0 \in Reg$.

\begin{rem}\label{r22} The argument applies also for a point $(\minx, y'')$ with $|y''|\le 1/2$. If $$\hat{h}(\minx, y'')-\hat{O}(\minx,y'')<-\delta,$$
 then $$\Sigma \cap \{(x',x'')| \quad x''=y''\}\cap B_{1/2} = \emptyset.$$
\end{rem}

\

\textit{Step 3:  If $\eps,\eta$ small depending on $\delta$,  we have 
\begin{equation}\label{step3}\hu\le \hh+ \frac a 2 |x''|^2+ C |x''|^3 + 4\delta \text{ in $B_{1/4}$}
\end{equation}
with $C$ universal, and 
\begin{equation}\label{adef}
a:=\frac 1 \eps |A^-| \le 2 c_0.
\end{equation}}

Recall that $A=D^2h(0)$, and that $c_0$ is the universal constant from Definition \ref{ApproxClass}.

The inequality holds outside $\mathcal C_\eta$ by \eqref{OutsideCylinder2}. It remains to establish it in $\mathcal C_\eta$.

First we use Steps 1 and 2 to show that a similar inequality holds for $\hat O$:
\begin{equation}\label{2041}
\hat O \le \hh+ \frac a 2 |x''|^2+ C |x''|^3 + 3\delta \quad \quad \mbox{in} \quad B_{1/2}.
\end{equation}
Then, as in Step 2, one can use the fast decay of $\hat{O}$ in the $x'$ direction (away from the $(\minx, x'')$ axis) and construct explicit barriers to 
extended the inequality from $\hat O$ to $\hat u$. 
By the same reason, it suffices to prove the inequality \eqref{2041} only on the $(\minx, x'')$ axis with $3 \delta$ replaced by $2 \delta$. This is a consequence of Taylor's expansion in the $x''$ direction from $(\minx,0)$. Indeed, we use Step 2 combined with the estimates
$$|\nabla_{x''}(\hh-\hO)(0)|\le  \delta, \quad D^2(\hh - \hO)(0) \ge - a I,$$ 
that we extend at $(\minx, 0)$ with an extra error of $C|\minx| \le C \delta^2$. This is because $D^3 \hat O=0$, $D^2_{x',x''} \hat O=0$, hence
\begin{equation}\label{hC3}
|D^3 (\hh - \hat{O})| \le C, \quad \quad |D^2_{x'x''}(\hat{h}-\hat O)|\le C.
\end{equation}
Finally, we remark that $a \le 2 c_0$ is a consequence of $D^2 u \ge - c_0 I$. Indeed, in $B_{1/2}\backslash\mathcal{C}_\eta$, \eqref{OutsideCylinder2} gives
$$D^2\hh-D^2\hat{O}\ge D^2\hat{u}-D^2\hat{O}-\eta \, I=\frac{1}{\eps}D^2u-\eta \, I \ge-(c_0+\eta) \, I.$$ By choosing $\eta$ small, we can extend the estimate to the full ball 
\begin{equation}\label{51}D^2(\hh-\hat{O})\ge-2c_0 \, I \text{ in $B_{1/2}$.}\end{equation}

\

\textit{Step 4: 
\begin{equation}\label{step4}
\exists \,  p'\in\UQua \quad \mbox{such that} \quad |u-p'| \le \left( \frac{a}{8c_0} + C_0 \rho \right) \eps  \, \rho^2 \quad \mbox{in} \quad B_\rho,
\end{equation}
if $\delta$ is sufficiently small, depending on universal constants and $\rho$. 
}

\

Define $$q(x)=\frac{1}{2}x\cdot D^2 \hat h(0)x+\nabla \hat h(0)\cdot x.$$
Then \eqref{hC2} implies $$|\hh-\hh(0)-q|\le C\rho^{3} \text{ in $B_{\rho}$.}$$

Using $\hh\le\hu\le\hh+\frac{a}{2}|x''|^2+C |x''|^3+4\delta$ in $B_{1/4}$ from \textit{Step 3}, and $\hu(0)=0$, we find 
$$|\hu-q|\le a \rho^2+2 C \rho^3 + 8\delta \text{ in $B_\rho$.}$$ Pick $\delta$ small such that $\delta \le \rho^3$, then we have $$|u-p-q\eps|\le  (a+ C_0 \rho) \,  \eps \rho^2 \text{ in $B_\rho$,}$$
for some $C_0$ universal.

Define $\tilde{p}=p+q\eps$, then $D^2\tilde{p}=D^2p+\eps D^2q=D^2h(0)=A$. Thus $F(A)=1.$

Next we perturb slightly $\tilde p$ into a convex polynomial $p' \in \UQua$. 

We know $A\ge - a \eps  I\, $, by the definition \eqref{adef} of $a$. Then \eqref{Ellipticity} gives $$1\le F(A^+)\le 1+ \Lambda a \eps, \quad \mbox{and} \quad \frac 1 \Lambda \le |A^+| \le 2 \Lambda.$$ 

Consequently, we can pick $t\in [0, a \Lambda^2 |A^+|^{-1} ]$ such that $$F((1-t \eps)A^+)=1.$$
Denote the new quadratic polynomial $$p'(x):=(1-t \eps)\, \frac{1}{2}x\cdot A^+x+\nabla h(0)\cdot x.$$ 
Then clearly $p'\in\UQua,$ and 
$$|p'-\tilde{p}|\le (t \eps |A^+| + |A^-|) \frac 12 \rho^2 \le a \Lambda^2 \eps\rho^2 \quad \mbox{in} \quad B_\rho.$$ 
Then $$|u-p'|\le (2 a \Lambda^2 + C \rho) \eps\rho^2 \quad \mbox{in} \quad B_\rho,$$ 
and \eqref{step4} is established by recalling the definition of $c_0$ in Definition \ref{ApproxClass}.

\

\textit{Step 5: If $a \le \frac{1}{2}c_0 \rho^\beta \, =:c_1(\beta),$ then (1) holds: $$u \in \mathcal S (p',\eps',\rho)\quad \mbox{ with} \quad  \eps'=\eps \rho^\beta.$$
Here $\rho$ is chosen sufficiently small (depending on $\beta$) so that $C_0 \rho \le \frac 1 4 \rho^\beta$.}

\

The estimate $|u-p'|\le \eps\rho^2$ follows from Step 4. We now show that $$D^2u\ge- c_0\eps \rho^\beta \, I \text{ in $B_\rho.$} $$  The inequality $D^2(\hat h - \hat O) \ge - a I$ at the origin together with $|D^3(\hat h - \hat O)| \le C$ implies that
$$D^2(\hat h - \hat O) \ge - (a + C \rho) I \quad \quad \mbox{in $B_\rho$.} $$
By \eqref{OutsideCylinder2}, a similar inequality holds for $\hat u$ outside the strip of width $\eta$. 
Thus $$w:=\frac 1 \eps u_{ee}^-$$ satisfies
$$ L_u (w) \ge 0, \quad \quad w \le c_0  \quad \mbox{in $B_\rho$,}$$
\begin{equation}\label{HowNegative}w \le a+C \rho + \eta \le \frac 34 c_0 \rho^\beta \quad \mbox{in} \quad B_\rho \setminus \mathcal C_\eta.\end{equation}
Weak Harnack inequality implies that $w \le c_0 \rho^\beta$ provided that the width $\eta$ of the strip is sufficiently small, depending on $\rho$ and the universal constants. This gives the desired lower bound on $D^2u$. 

Recall that $h_\rho$ is the solution to \eqref{hrdef} in $B_\rho$ and $A_\rho=D^2h_\rho(0)$. To complete the proof of alternative a) as in Lemma \ref{L1}, we use  \eqref{h'-h} to get 
\begin{equation}\label{ar1}
|A_\rho - A| \le C(\rho) \, ( u- h)(\frac 12 \rho e_1) \le C \eta \, \eps.
\end{equation}The right-hand side is bounded by $\eps-\eps'$ if $\eta$ is small.

\

\textit{Step 6: If $a > \frac 12 c_0 \rho^\beta$ then the conclusion (2) holds.}

\

We distinguish 2 cases, depending on whether  $a$ is greater than $c_0/4$.

If $a \le c_0/4$, then we can argue precisely as in Step 5, with $\rho^\beta$ replaced by $2a/c_0$, and from Steps 4 and 5 we obtain $$u \in \mathcal S(p',\eps', \rho) \quad \mbox{ with} \quad  \eps'=\frac{2a}{c_0} \eps \le \frac 12 \eps.$$ Moreover by \eqref{ar1}, 
\begin{equation}\label{Ar22}
|A_\rho^-| \ge (a - C \eta) \eps \ge \frac{c_0}{8} \eps' \ge c_1(\beta) \eps'.
\end{equation}

Now we consider the case $a \ge c_0/4$, and get  $\eps' \in [\eps/2, \eps)$. With $a\ge c_0/4$,  \eqref{Ar22} remains valid for any $\eps' \le \eps$. The choice of $\rho$ in Step 5 and $a \le 2 c_0$ imply in \eqref{step4} that $|u-p'| \le \frac 12 \eps \rho^2 \le \eps' \rho^2$ in $B_\rho$. It remains to show the improvement of convexity$$D^2u\ge- c_0\eps'  I \text{ in $B_\rho$} $$ with $\eps'\le \eps-\eps^\mu.$

We show that the improvement $\eps - \eps'$ is at least proportional to $(u-h)(\frac \rho 2 e_1)$. The key observation is that $u-h$ is a subsolution and $D_{ee}u + c_0 \eps$ is a supersolution for the same linearized operator $L_u$, and that the two functions can be compared in the domain $B_1 \cap \{u>0\}$. 

Since $D_{\xi\xi} (\hh -\hO)(0) \le - \frac 12 c_0$ for some unit direction $\xi$, 
then by \eqref{hC3} we conclude $D_{\xi\xi}(\hh-\hO)<-\frac{1}{16}c_0$ in $B_{c}$ for a universal $c>0$. 
Together with \textit{Step 2}, this implies the existence of some $x^*\in B_{1/4}$ such that $(\hh-\hO)(x^*)<-c$ for some universal $c$, that is, $h(x^*)<-c\eps.$
With the universal Lipschitz regularity of $h$, we get $$h<-c\eps \quad \Longrightarrow \quad u-h\ge c\eps \quad \text{ in $B_{c'\eps}(x^*)$}$$ for some small universal $c,c'>0.$ 

Note that $L_h(u-h)\le 0$ in $B_1$ as in \eqref{CompareWithLinearizedEquation}, $u=h$ on $\partial B_1$. We compare $u-h$ to the corresponding solution of the maximal Pucci operator in $B_1 \setminus B_{c'\eps}(x^*)$ and obtain as a consequence of Harnack inequality 
$$u-h\ge \eps^\mu \text{ in $B_{1/2}$,}$$
for some universal $\mu>1$.
Moreover, since $u-h$ solves a linear equation away from $\mathcal C_\eta$, the same argument combined with Harnack inequality imply that
\begin{equation}\label{u-h2}
u-h\ge c (u-h)(\frac 12 \rho e_1) \ge c \eps^\mu \text{ in $B_{1/2}$.}
\end{equation} 
For $e\in\Sph$, we define $$w=\Dee u+c_0\eps.$$ This is a nonnegative function satisfying $L_u(w)\le 0$ in $B_1\cap\PosS.$ Note that $w\ge c_0\eps$ along $\partial\PosS$, and $2 \eps \ge u-h$ in $B_1$, hence
$$w\ge \frac{c_0}{2}(u-h) \quad \text{along $\partial(B_1\cap\PosS)$.}$$
Since  $L_u(w)\le0\le L_u(u-h)$ in $B_1\cap\PosS$, we have $$w\ge\frac{c_0}{2}(u-h) \text{ in $B_1\cap\PosS.$}$$
Combining this with \eqref{u-h2} we find
$$w\ge c(u-h)(\frac{1}{2}\rho e_1)\quad \text{in $B_{1/2}$,}$$ 
which means 
$$\Dee u\ge -c_0\eps+ c(u-h)(\frac{1}{2}\rho e_1) \quad \text{in $B_{1/2}$.}$$
Define the right-hand side to be $-c_0\eps'$, then 
$$\eps':=\eps-\frac{c}{c_0}(u-h)(\frac{1}{2}\rho e_1), \quad \quad  \eps^{2 \mu} \le  \eps - \eps' \le C \eta \eps \le \frac{\eps}{2}.$$ Also, $(u-h)(\frac{1}{2}\rho e_1)=C(\eps-\eps')$ and by \eqref{ar1}, \eqref{Ar22} and Step 4, the second alternative holds.

\end{proof} 

We make a few remarks about Step 6 which are helpful towards Lemma \ref{L11}.

\begin{rem}\label{r61}
We actually proved a stronger bound for $|A_\rho^-|$ than the one stated in alternative (2), which is independent of $\beta$, i.e. $|A^-_\rho| \ge \frac{c_0}{8} \eps'$, see \eqref{Ar22}. 
Thus, if we are in alternative (2), after rescaling back from $B_\rho$ to $B_1$ we end up in the situation $a \ge \frac{c_0}{8}$ of Step 6. 
Then, in either case $a \in [c_0/8, c_0/4]$ or $a \ge c_0/4$, we showed that $ \eps'  \ge \frac 14 \eps$.
\end{rem}

\begin{rem}\label{r62}
After relabeling the constants $\kappa_0$ and $\bar \eps$ to guarantee that the hypothesis \eqref{l2} keeps being satisfied as we apply Lemma \ref{L1} inductively, we obtain 
\begin{equation}\label{pmrm}
u \in \mathcal S(p_m, \eps_m, \rho^m), \quad \quad m \ge 1.
\end{equation} 
If we end up in alternative (2) for some $m_0$, then we remain in alternative (2) for all $m \ge m_0$, and by Remark \ref{r61}, $\eps_{m+1} \ge \frac 14 \eps_m$ if $m \ge m_0 +1 $.
\end{rem}

\begin{rem}\label{r63}
We can relabel the constants $\kappa_0$ and $\bar \eps$ so that in case (2) we also have
$$ u \in \mathcal S(p',\eps',r), \quad \quad |A_r^-| \ge \frac{c_0}{8} \eps', \quad \forall  r \in [\rho^2,\rho].$$
Indeed, we may choose the parameters $\delta$ and $\eta$ sufficiently small so that Step 4, and \eqref{ar1}, \eqref{Ar22} remain valid after replacing $\rho$ by $r$ as above. Thus, by Remark \ref{r62}, if we are in case (2) then $u \in \mathcal S(p_r,\eps_r,r)$ and $|A_r^-| \ge \frac{c_0}{8} \eps_r$ for all $ r \le \rho$. 
\end{rem}

\begin{rem}\label{64}
If we are in case (2), $|A^-| \ge c_1(\beta)\eps$, then the conclusion applies at all points $x_0 \in \Sigma \cap B_\rho$ and not only the origin, since as in $\eqref{ar1}$, $\eqref{Ar22}$ we can deduce that $|A_\rho^-(x_0)| \ge \frac{c_0}{8} \eps'$.
\end{rem}

\

\begin{proof}[Proof of Lemma \ref{L11}]
In view of the Remarks above, it suffices to assume we are in the situation of Step 6 in the proof of Lemma \ref{L1} and establish the following two claims. 

\

{\it Claim 1: $ 0 \in \Sigma_a$.}

First we observe that $u$ cannot be approximated in $L^\infty(B_1)$ by any quadratic polynomial with an error smaller than $c(\beta) \eps$.

Indeed, $|A^-| \ge a \eps$ with $a \ge c_1(\beta) =\frac 12 c_0 \rho^\beta $, 
and $|D^2 h - A| \le C \rho \eps $ in $B_\rho$ imply $$D_{\xi\xi}^2h \le - c_2(\beta) \eps \quad \mbox{ in} \quad B_\rho,$$ 
for some unit direction $\xi$. 
By \eqref{OutsideCylinder2} a similar estimate holds for $u$ outside $\mathcal C_\eta$. This and $u(0)=0$, $u \ge 0$, show that $\|u-q\|_{L^\infty(B_\rho)} \ge c \eps$ for any quadratic polynomial $q$.  

Next we apply Lemma \ref{L1} inductively and obtain \eqref{pmrm} with $\eps_m \ge c(\eps,\beta) 4^{-m}= c(\eps,\beta) (\rho^m)^\alpha$, for some $\alpha$, which shows that $u$ is not better than $C^{2,\alpha}$ at the origin.

On the other hand the value of $\rho$ can be reset to be smaller and smaller after a finite number of steps. This is because the rescaled value of $\kappa_0$ tends to infinity as $\eps_m \to 0$ which means that we may take the parameters $\delta$, $\eta$, $\rho$ to be smaller and smaller. 
Thus $u$ is not $C^{2,\alpha}$ at the origin for any $\alpha>0$, and Claim 1 is proved. 

\

{\it Claim 2: $\Sigma \cap B_\rho$ is in a $\sigma(\delta)$-neighborhood of a subspace of codimension at least $k+1$, and $\sigma(\delta) \to 0$ as $\delta \to 0$.} 

Under our assumption $\lambda_1(D^2p)\ge\lambda_2(D^2p)\ge\dots\ge\lambda_k(D^2p)\ge\delta^{-4}\eps$, we already have $\Sigma\subset\{|x'|\le\delta^2\}$. The goal is to exploit the negative eigenvalue of $D^2(\hh-\hO)$ in alternative (2) to get estimates in one of the $x''$-directions.

By hypothesis $D^2(\hat h - \hat O)(0)$ has an eigenvalue less than $-c_1(\beta)$. Since
$$D_{x'x'}^2 (\hat h - \hat O) \ge \delta^{-4}, \quad |D_{x'x''}^2 (\hat h - \hat O)| \le C,$$
we can find a unit direction $\xi$ belonging to the $x''$ subspace, say $\xi=e_{k+1}$, such $D_{\xi\xi}^2(\hat h - \hat O) \le - \frac 12 
c_1(\beta)$. By Taylor's expansion as in Step 3, we obtain that
$$(\hat h - \hat O)((\minx, 0) + t \xi) \le C \delta - c(\beta) t^2, \quad \quad |t| \le \rho.$$
Then, by Remark \ref{r22}, we conclude $\Sigma \cap B_\rho \subset \{ |x_{k+1}| \le C(\beta) \delta^{1/2}=:\sigma(\delta)\}$. \end{proof}

\section{Results for the thin obstacle problem}\label{Thin}
In this section, we collect some results about the thin obstacle problem which are useful in the analysis of the top stratum $\Sigma^{d-1}$. 

Assume $v$ solves the thin obstacle problem in $B_1$,
\begin{equation}\label{ThinObstacle}
\begin{cases}
\Delta v\le 0 &\text{ in $B_1$,}\\ \Delta v=0 &\text{ in $B_1\cap(\{v>0\}\cup\{x_1\neq 0\})$,}\\ v\ge 0 &\text{ along $\{x_1=0\}$}.
\end{cases}
\end{equation} 
Here $x_1$ denotes the first coordinate function of $\R^d$. 

It is well known that $v$ is locally Lipschitz, and in fact $v \in C^{1,1/2}$ when restricted to each half-space $\{x_1 \ge 0\}$ and $\{x_1 \le 0\}$. 
An important tool is the Almgren frequency formula which states that if $v$ does not vanish identically then
$$ \lambda (r):=\frac{r\int_{B_r}|\nabla v|^2 dx}{\int_{\partial B_r}v^2 dx} \quad \quad \mbox{is monotone increasing in $r$,}$$
and $\lambda (r)$ is constant in $r$ if and only if $v$ is a homogeneous solution to \eqref{ThinObstacle}.

We define the frequency of $v$ at $0$ as
$$\lambda:=\lim_{r \to 0^+} \lambda (r).$$
The rescalings of $v$ at $0$ that fix the $L^2$ norm on $\partial B_1$, 
$$ v_r(x):= \frac{r^\frac{d-1}{2}}{\|v\|_{L^2(\partial B_r)}} \, v(rx) $$
converge along subsequences to a global $\lambda$-homogeneous  solution, which we refer to as a blow-up profile of $v$ at $0$.   
In particular for each small $r$, there exists a a global $\lambda$-homogeneous global solution $V_r$ such that
\begin{equation}\label{exp}
\|v - V_r\|_{L^\infty(B_r)} \le \delta(r) \|V_r\|_{L^\infty (B_r)}, \quad \quad \delta(r) \to 0 \quad \mbox{as $r \to 0$,} 
\end{equation}
and
$$r^{\lambda -\alpha} \ge \|V_r\|_{L^\infty(B_r)} \ge r^{\lambda +\alpha},$$
for any fixed $\alpha>0$ and all $r$ sufficiently small.

Athanasopoulos, Caffarelli and Salsa in \cite{ACS} showed that the only possible values for the frequency $\lambda$ that are less than 2 are $\lambda=0$ (when $v(0)>0$), $
\lambda=1$ (when $0$ is interior to $\{v=0\}$ in $\{x_1=0\}$), or $\lambda=\frac 32$. In this last case the uniqueness of the blow-up profile was 
established as well, which means that the expansion \eqref{exp} holds for a fixed non-zero homogeneous solution $V$ in place of $V_r$. 

Concerning higher frequencies, when $\lambda$ is 
an even integer, Garofalo and Petrosyan in \cite{GP} characterized all possible $\lambda$-homogeneous solutions as harmonic polynomials. They also proved the uniqueness of blow-ups for these values of $\lambda$. Colombo, 
Spolaor and Velichkov in \cite{CSV2} sharpened these results through a log-epiperimetric inequality, and obtained a frequency gap near the even 
integers. 

More recently, Figalli, Serra and Ros-Oton in \cite{FSR} characterized the $\lambda$-homogeneous solutions when $\lambda$ is 
an odd integer, and proved the uniqueness of the blow-ups for these values of $\lambda$.

Below we give a short proof that establishes the frequency gap near all integers.

\begin{thm}[Frequency gap near integers]\label{FG} Suppose there is a non-trivial solution to the thin obstacle problem \eqref{ThinObstacle} with frequency $\lambda.$

For each $m \in \mathbb N$, there exits $\alpha_m>0$ small, depending only on the dimension $d$ and $m$ so that $$ \lambda \notin (m-\alpha_m,m+\alpha_m) \setminus\{m\}.$$ 
\end{thm}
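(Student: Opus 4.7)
\medskip

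\noindent\textbf{Proof sketch for Theorem \ref{FG}.}

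The plan is to argue by contradiction using a compactness--linearization scheme, normalizing around a candidate $m$-homogeneous profile and extracting a ``log mode'' that the linearized problem around $m$-homogeneous solutions cannot support. Suppose no such $\alpha_m>0$ exists: then one can find a sequence of nontrivial solutions with frequencies $\lambda_k \in (m-1/k, m+1/k) \setminus \{m\}$. Since the frequency at the origin equals the homogeneity of any blow-up at $0$, after passing to blow-ups we may assume each $v_k$ is itself a $\lambda_k$-homogeneous global solution, normalized so that $\|v_k\|_{L^2(\partial B_1)} = 1$. The uniform $C^{1,1/2}$ estimates on each half space $\{\pm x_1 \ge 0\}$, together with homogeneity, yield locally uniform convergence along a subsequence to a nontrivial $m$-homogeneous global solution $v_\infty$.

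Next, I would set $\eps_k := \|v_k - v_\infty\|_{L^2(\partial B_1)}$, so that $\eps_k \to 0^+$, and define the normalized deviation
\[
w_k := \frac{v_k - v_\infty}{\eps_k}, \quad \mu_k := \frac{\lambda_k - m}{\eps_k}.
\]
Because both $v_k$ and $v_\infty$ are harmonic in $B_1 \setminus\{x_1=0\}$, so is $w_k$, and the homogeneity identities $r\partial_r v_k = \lambda_k v_k$, $r\partial_r v_\infty = m v_\infty$ give
\[
r\partial_r w_k \;=\; \lambda_k\, w_k \,+\, \mu_k\, v_\infty.
\]
After extracting a further subsequence, either $\mu_k$ is bounded and converges to some $\mu \in \R$, or $|\mu_k| \to \infty$. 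In the latter case, $|\mu_k|\,\|v_\infty\|_{L^2(\partial B_1)}$ dominates $\|r\partial_r w_k - \lambda_k w_k\|$, and one checks (using the uniform bound on $\|w_k\|$ coming from $w_k$ being harmonic off the thin set together with standard trace estimates) that this is incompatible with $\|w_k\|_{L^2(\partial B_1)}=1$. So $\mu_k\to\mu$ finite, and $w_k \to w_\infty$ with
\[
r\partial_r w_\infty \;=\; m\, w_\infty \,+\, \mu\, v_\infty,
\]
meaning $w_\infty(r\omega) = r^m\bigl(\phi(\omega) + \mu f_\infty(\omega)\log r\bigr)$, where $v_\infty(r\omega) = r^m f_\infty(\omega)$.

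The crucial step is to pass the thin obstacle structure to the limit and conclude that $w_\infty$ must be a \emph{linearized} solution around $v_\infty$: harmonic off the thin set, unconstrained on the non-coincidence set $\{v_\infty>0\} \cap \{x_1=0\}$, and satisfying a Signorini-type one-sided condition on the coincidence set $\Lambda_\infty := \{v_\infty = 0\} \cap \{x_1=0\}$ (one-sided because $v_k \ge 0$). The classification of admissible homogeneous profiles at $v_\infty$ forces the log mode to vanish: inserting $w_\infty = r^m(\phi + \mu f_\infty\log r)$ into the linearized system and integrating by parts against $v_\infty$ over $B_1$ (i.e. using the linearized Weiss-type energy identity at homogeneity $m$) produces a boundary contribution proportional to $\mu\int_{\partial B_1}f_\infty^2$, which must vanish, hence $\mu = 0$. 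Therefore $w_\infty$ is a nontrivial $m$-homogeneous solution of the linearized problem. Normalizing the choice of $\eps_k$ (by subtracting from $v_k$ its $L^2(\partial B_1)$-projection onto the space of $m$-homogeneous solutions of the original problem, for even $m$; or, for odd $m$, using the one-parameter group of rotations/translations in the FSR classification \cite{FSR}), one guarantees $w_\infty$ is orthogonal to $v_\infty$ and its symmetries. Combined with the classification of $m$-homogeneous solutions (harmonic polynomials nonnegative on $\{x_1=0\}$ when $m$ is even, by Garofalo--Petrosyan; the FSR profiles when $m$ is odd), and with the fact that the linearization inherits the same classification, this contradicts $\|w_\infty\|_{L^2(\partial B_1)} = 1$.

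The main obstacle I anticipate is the third step: rigorously passing to the linearized problem and ruling out the log mode. The contact set $\Lambda_\infty$ can be geometrically complicated, especially for odd $m \ge 3$ where the FSR profiles have nontrivial zero sets, so justifying that $w_k$ converges on the complement of $\Lambda_\infty$ and satisfies the right boundary conditions on $\Lambda_\infty$ requires quantitative stability of the coincidence sets under perturbation. This is also the step where treating even and odd $m$ uniformly is delicate: the argument above is designed so that the only ingredient that distinguishes the parity of $m$ is the classification of $m$-homogeneous solutions, which is now available in both cases.
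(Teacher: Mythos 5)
Your strategy (blow up, linearize around the limiting $m$-homogeneous profile, extract a $r^m\log r$ mode, and kill it with a spectral/classification argument for the linearized Signorini problem) is genuinely different from the paper's argument, and as written it has a real gap precisely at the step you flag. The difficulty is not only ``quantitative stability of the coincidence sets'': it is already the compactness of $w_k=(v_k-v_\infty)/\eps_k$. The function $w_k$ is \emph{not} harmonic across $\{x_1=0\}$; one has $\triangle w_k=\eps_k^{-1}(\triangle v_k-\triangle v_\infty)$, a difference of two negative measures divided by a quantity that is only defined as an $L^2(\partial B_1)$ distance, and there is no a priori bound on its total variation. So ``uniform bounds from harmonicity off the thin set plus trace estimates'' do not give you a convergent subsequence of $w_k$, nor the claimed exclusion of $|\mu_k|\to\infty$ (a safer normalization is $\tilde\eps_k=\|v_k-v_\infty\|_{L^2(\partial B_1)}+|\lambda_k-m|$, but then $w_\infty$ may vanish and the log mode survives). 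Moreover, the two inputs you invoke at the end --- that the linearized problem at $v_\infty$ ``inherits the same classification'' of $m$-homogeneous solutions, and that a Weiss-type identity forces $\mu=0$ --- are exactly the content one would need to prove; for odd $m$ the contact set of $v_\infty$ is the entire hyperplane (every $(2k+1)$-homogeneous solution vanishes on $\{x_1=0\}$), and no spectral gap for that linearized problem is available in the literature. As it stands the proposal is a program, not a proof.

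For comparison, the paper's proof is a first-order duality argument that avoids linearization entirely. One first shows, by testing $-\triangle v$ against explicit homogeneous harmonic comparison functions, that every $2k$-homogeneous solution is a harmonic polynomial (Garofalo--Petrosyan) and every $(2k+1)$-homogeneous solution vanishes on $\{x_1=0\}$ (Figalli--Ros-Oton--Serra). Then, for a sequence of $\lambda_j$-homogeneous solutions $v_j\to v$ with $\lambda_j\to m$, integration by parts gives an identity of the form
$$\int_{B_1}(v\pm\delta P)(-\triangle v_j)=(m-\lambda_j)\int_{\partial B_1}(v\pm\delta P)\,v_j,$$
where $P$ is an explicit $m$-homogeneous comparison function ($P_I$ with $P_I(0,x')=|x'|^{2k}$ in the even case; the roles of $v$ and $v_j$ are swapped and $Q_I$ with $\triangle Q_I=-|x'|^{2k}\,d\mathcal H^{d-1}|_{\{x_1=0\}}$ is used in the odd case). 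For small fixed $\delta$ the right-hand side has the sign of $m-\lambda_j$ independently of the choice of $\pm$, while the support of the relevant measure is localized by uniform convergence so that the left-hand side has a sign dictated by the choice of $\pm$; choosing the sign appropriately gives an immediate contradiction. This requires no compactness for normalized differences, no analysis of the linearized problem, and treats even and odd $m$ by the same two-line computation. If you want to salvage your approach, the essential missing lemmas are (i) a uniform $BV$/capacity bound on $\eps_k^{-1}\triangle v_k$ near $\Lambda_\infty$ and (ii) a classification of the $m$-homogeneous and $r^m\log r$ solutions of the limiting linearized problem; both are substantial, and neither is needed for the statement.
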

\begin{proof}
{\it Case 1: $m=2k$ is even.}

We first point out that any $2k$-homogeneous solution to \eqref{ThinObstacle} must be a harmonic polynomial. This is already known by Garofalo-Petrosyan \cite{GP}. 

To see this, let $v$ be a $2k$-homogeneous solution, and let $p$ be any $2k$-homogeneous harmonic function. Then 
\begin{equation}\label{pdm0}
\int_{B_1}p(-\triangle v)= \int_{B_1}v \triangle p - p \triangle v= \int_{\partial B_1} v p_\nu - p v_\nu=0.
\end{equation} The last equality follows from the homogeneity of the functions. 

Apply this with $p=P_I$, the $2k$-homogeneous harmonic function with $P_I(0,x')=|x'|^{2k}$ in $\{x_1=0\}$, we have $\int_{B_1}P_I(-\triangle v)=0.$ Note that $-\triangle v$ is a non-negative measure supported on $\{x_1=0\}$, this implies $$\Delta v=0.$$

Now suppose, on the contrary, that there is a sequence of non-trivial solutions to \eqref{ThinObstacle}, denoted by $v_j$, that are $\lambda_j$-homogeneous with $\lambda_j\neq 2k$ but $\lambda_j\to 2k.$

Up to a normalization, we assume $\|v_j\|_{L^2(B_1)}=1$. Then up to a subsequence, we have $v_j\to v$ locally uniformly in $B_1$, where $v$ is a $2k$-homogeneous solution with $\|v\|_{L^2(B_1)}=1$. The convergence is uniform in $C^1$ if we restrict the domain to $B_1\cap\{x_1\ge 0\}$ or $B_1\cap\{x_1\le 0\}$. 

A similar computation as in \eqref{pdm0} gives:
\begin{equation}\label{EvenContradiction}\int_{B_1}(v\pm\delta P_I)(-\triangle v_j)=(2k-\lambda_j)\int_{\partial B_1}(v\pm\delta P_I)v_j.\end{equation}

Locally uniform convergence of $v_j\to v$  and homogeneity of the functions imply that $\int_{\partial B_1}vv_j\ge c_d>0$ for all large $j$. Consequently, fix $\delta>0$ small, we have $\int_{\partial B_1}(v\pm\delta P_I)v_j>0$ for all large $j$. In particular, regardless of the sign in front of $\delta,$ the right-hand side of \eqref{EvenContradiction} has the same sign as $(2k-\lambda_j)$.

On the other hand, by the locally uniform convergence of $v_j\to v$ and homogeneity of the functions, we have that the  support of $-\triangle v_j$ is contained in $\{v<\frac\delta 2 P_I\}$ for large $j$. Consequently, in \eqref{EvenContradiction}, the left-hand side is non-negative if we choose the positive sign in front of $\delta$, and non-positive otherwise. This is a contradiction.

{\it Case 2: $m=2k+1$ is odd. }

We first point out that any $(2k+1)$-homogeneous solution to \eqref{ThinObstacle} must vanish along the hyperplane $\{x_1=0\}$. This is already known as in \cite{FSR}. We sketch the proof here for completeness. 

To see this, let $v$ be a $(2k+1)$-homogeneous solution. Let $Q_I$ denote the $(2k+1)$-homogeneous function that is even with respect to $\{x_1=0\}$, harmonic in $\{x_1\neq 0\}$, and satisfies $Q_I(0,x')=0$ and $\triangle Q_I=-|x'|^{2k}d\mathcal{H}^{d-1}|_{\{x_1=0\}}.$

A similar computation as in \eqref{pdm0} gives
$$\int_{B_1}v(-\triangle Q_I)=0.$$With $v\ge 0$ on $\{x_1=0\}$, this forces $v=0$ on $\{x_1=0\}.$

Suppose  that there is a sequence of non-trivial solutions to \eqref{ThinObstacle}, denoted by $v_j$, that are $\lambda_j$-homogeneous with $\lambda_j\neq 2k+1$ but $\lambda_j\to 2k+1.$

Similar to the previous case, we have $v_j\to v$, where  $v$ is a $(2k+1)$-homogeneous solution, and that $\|v_j\|_{L^2(B_1)}=\|v\|_{L^2(B_1)}=1$. A similar computation as in \eqref{pdm0} gives 
$$\int_{B_1}v_j(\pm\delta\triangle Q_I-\triangle v)=(\lambda_j-2k-1)\int_{\partial B_1}v_j(\pm\delta Q_I-v).$$

Similar to the previous case, when $\delta$ is small, the right-hand side has the same sign as $(\lambda_j-2k-1)$, regardless of the sign in front of $\delta.$

On the other hand, for large $j$, with locally uniform convergence of $v_j\to v$ as well as the homogeneity of the functions, we have $\{v_j\neq 0\}\cap\{x_1=0\}\subset\{\triangle v\ge \frac\delta 2 \triangle Q_I\}$. Thus the left-hand side is non-positive if the sign in front of $\delta$ is positive, and non-negative otherwise. This is a contradiction. \end{proof}

We will use Theorem \ref{FG} only for $m=2,3$. As a consequence we obtain the following result.

\begin{lem}\label{l31}
Suppose that $v$ solves \eqref{ThinObstacle} and that it is $\eta$-approximated by a $\lambda$-homogeneous function $W$ with $\lambda \le 3-\alpha_3$ (with $\alpha_3$ in Theorem \ref{FG}). 
\begin{equation}\label{v-V}
\|v-W\|_{L^\infty(B_1)} \le \eta \|W\|_{L^\infty(B_1)}.
\end{equation}
If $\eta \le c$ with $c$ depending only on the dimension $d$, then $$\lambda_{x_0} \le 3-\alpha_3, \quad \mbox{in $B_{1/2}$,}$$ where $\lambda_{x_0}$ denotes the frequency at a point $x_0\in\partial \{v>0\}\cap\{x_1=0\}$. 
\end{lem}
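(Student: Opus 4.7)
The plan is to argue by contradiction, combining a normal family argument, the frequency gap of Theorem \ref{FG} at $m=3$, and the classical upper semi-continuity of the Almgren frequency under uniform convergence of thin obstacle solutions.

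Suppose the conclusion fails. Then there exist sequences $\eta_j \to 0$, solutions $v_j$ of \eqref{ThinObstacle} in $B_1$, and $\lambda_j$-homogeneous functions $W_j$ with $\lambda_j \le 3-\alpha_3$, normalized so that $\|W_j\|_{L^\infty(B_1)} = 1$ and $\|v_j - W_j\|_{L^\infty(B_1)} \le \eta_j$, together with free boundary points $x_j \in B_{1/2}\cap\partial\{v_j>0\}\cap\{x_1=0\}$ satisfying $\lambda_{x_j}(v_j) > 3-\alpha_3$. Theorem \ref{FG} with $m=3$ then forces $\lambda_{x_j}(v_j) \ge 3$. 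Using the $C^{1,1/2}$ regularity of thin obstacle solutions on each side of $\{x_1=0\}$, extract a subsequence along which $v_j, W_j \to W$ locally uniformly and in $H^1_{loc}$ on $B_1$, $\lambda_j \to \lambda_\infty \in [0, 3-\alpha_3]$, and $x_j \to x_\infty \in \overline{B_{1/2}} \cap \{x_1 = 0\}$. The limit $W$ is a nontrivial $\lambda_\infty$-homogeneous thin obstacle solution on $\R^d$ with $W(x_\infty)=0$.

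For the upper semi-continuity step, observe that for a.e. $r>0$ the Almgren ratios
$$\lambda_{x_j}(v_j)(r) = \frac{r\int_{B_r(x_j)}|\nabla v_j|^2}{\int_{\partial B_r(x_j)}v_j^2}$$
converge to $\lambda_{x_\infty}(W)(r)$ by $H^1_{loc}$ convergence and the trace theorem, and Almgren monotonicity yields $\lambda_{x_j}(v_j) \le \lambda_{x_j}(v_j)(r)$. Sending $r\to 0^+$ produces $\lambda_{x_\infty}(W) \ge 3$. To close the contradiction I exploit the homogeneity of $W$ at large scales: a direct rescaling $y=x/r$ shows $\lambda_{x_\infty}(W)(r) \to \lambda_\infty$ as $r \to \infty$ (when $x_\infty\ne 0$, $B_r(x_\infty)$ becomes asymptotically a homothety of $B_1(0)$, on which the ratio equals $\lambda_\infty$ by homogeneity; when $x_\infty=0$ the ratio equals $\lambda_\infty$ identically). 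Almgren monotonicity applied at $x_\infty$ then gives $\lambda_{x_\infty}(W) \le \lambda_\infty \le 3 - \alpha_3 < 3$, contradicting $\lambda_{x_\infty}(W) \ge 3$.

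The main obstacle I anticipate is the final homogeneity computation: one must check carefully that Almgren's monotonicity is applicable at the center $x_\infty$ (which holds since $W(x_\infty)=0$ and $x_\infty$ lies on the thin hyperplane), and that the limit of the Almgren ratio as $r \to \infty$ is indeed $\lambda_\infty$ for $x_\infty \ne 0$. The upper semi-continuity step is classical once one has $H^1_{loc}$ compactness, which is provided by the $C^{1,1/2}$ estimates on each half-space.
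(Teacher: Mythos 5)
Your argument is correct and is essentially the paper's own proof: a compactness argument producing a nontrivial $\lambda_\infty$-homogeneous limit solution with $\lambda_\infty\le 3-\alpha_3$, combined with Almgren monotonicity, the fact that a homogeneous solution has frequency at most its homogeneity degree at every off-center point, and the gap of Theorem \ref{FG}. The only cosmetic difference is that the paper transfers the bound to $v_k$ through the Almgren ratio at the fixed radius $1/8$ and then invokes the gap, whereas you run a contradiction and push the assumed lower bound $\lambda_{x_j}\ge 3$ to the limit via upper semicontinuity of the frequency; the ingredients are identical.
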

We remark that $W$ is not assumed to be a homogeneous solution to the thin obstacle problem.
\begin{proof}
It follows by compactness. 
If $v_k$, $W_k$, $\sigma_k$ satisfy the hypotheses with $\sigma_k \to 0$ and $\|W_k\|_{L^\infty}=1$, 
then we can extract a subsequence such that $v_k \to \bar v$, $W_k \to \bar v$ in $B_{3/4}$ with $\bar v$ a homogeneous solution of degree $\bar 
\lambda \le 3-\alpha_3$. Then $\lambda^{\bar v}_{x_0}(1/8) \le 3-\alpha_3$ which means $\lambda^{v_k}_{x_0} \le \lambda^{v_k}_{x_0}(1/8) \le 3- \frac 12 
\alpha_3$, for all large $k$ and the conclusion follows by Theorem \ref{FG}.
\end{proof}

We will use estimates of the type \eqref{v-V} to express that the ``frequency" of $v$ is less than 3. This is convenient for perturbations of the thin obstacle problem, where the monotonicity formula might not apply. We make the following definition for functions that do not necessarily solve \eqref{ThinObstacle}.

\begin{defi}\label{aeta}
We say that $$w \in \mathcal A (\eta, B_r)$$
if $w$ can be $\eta$-approximated in $B_r$ by a $\lambda$-homogeneous function $W$ with $\lambda \le 3-\alpha_3$,
$$\|w-W\|_{L^\infty(B_r)} \le \eta \, \, \|W\|_{L^\infty(B_r)}.$$

\end{defi}

\begin{lem}\label{n13} Assume that $v$ solves \eqref{ThinObstacle} and $v \in \mathcal A (c, B_1)$ with $c$ as in Lemma \ref{l31}. For any $\eta>0$, there exists $c(\eta)$ depending on $\eta$ and $d$ such that 
$$  v \in \mathcal A(\eta,B_r), \quad \mbox{for some $r \in [c(\eta), 1/2]$.}$$

\end{lem}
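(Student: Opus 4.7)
Argue by compactness and contradiction. Suppose the conclusion fails: there exist $\eta_0 > 0$ and a sequence of solutions $v_k$ to \eqref{ThinObstacle} with $v_k \in \mathcal{A}(c, B_1)$ --- witnessed by $\lambda_k$-homogeneous functions $W_k$ normalized to $\|W_k\|_{L^\infty(B_1)} = 1$, with $\lambda_k \in [0, 3-\alpha_3]$ --- but such that $v_k \notin \mathcal{A}(\eta_0, B_r)$ for every $r \in [1/k, 1/2]$. The normalization forces $\|v_k\|_{L^\infty(B_1)} \in [1-c, 1+c]$.

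Using the $C^{1,1/2}$ regularity for solutions to the thin obstacle problem, $\{v_k\}$ is precompact locally uniformly on $B_1$; I extract a subsequence with $v_k \to v_\infty$ locally uniformly, $v_\infty$ again a solution to \eqref{ThinObstacle}, and $\lambda_k \to \lambda_\infty \in [0, 3-\alpha_3]$. The closeness $\|v_k - W_k\|_{L^\infty(B_1)} \le c$ together with the homogeneity of $W_k$ and the uniform $C^{1,1/2}$ regularity of $v_k$ lets me extract (along a further subsequence) a $\lambda_\infty$-homogeneous limit $W_\infty$ of the $W_k$ for which $\|v_\infty - W_\infty\|_{L^\infty(B_1)} \le c'$, with $c'$ only marginally worse than $c$. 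Taking the hypothesis constant $c$ small enough to absorb this loss ensures that $v_\infty$ still lies in $\mathcal{A}$ with the threshold constant of Lemma \ref{l31}, so Lemma \ref{l31} applies to $v_\infty$ and yields that its Almgren frequency at the origin satisfies $\lambda_0 \le 3-\alpha_3$.

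Now Almgren's monotonicity formula, together with $\lambda_0 \le 3-\alpha_3$, produces a $\lambda_0$-homogeneous blow-up profile $V$ of $v_\infty$ at the origin; consequently there is a scale $\bar r \in (0, 1/2]$ (depending on $v_\infty$) for which $v_\infty \in \mathcal{A}(\eta_0/2, B_{\bar r})$. By the local uniform convergence $v_k \to v_\infty$ on $B_{\bar r}$, for $k$ sufficiently large this upgrades to $v_k \in \mathcal{A}(\eta_0, B_{\bar r})$, contradicting the hypothesis on $v_k$ as soon as $1/k \le \bar r$. The main obstacle is the limit passage for $W_k$: by Definition \ref{aeta} these are only assumed to be homogeneous, with no a priori regularity, so extracting $W_\infty$ is not automatic. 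This is handled by using the proximity $\|v_k - W_k\|_{L^\infty(B_1)} \le c$ together with the smoothness of the $v_k$ to control $W_k$ up to $O(c)$ on each sphere $\partial B_\rho$ ($\rho \in (0,1]$), which via $\lambda_k$-homogeneity pins down $W_k$ on all of $B_1$ up to $O(c)$ and permits the extraction of a $\lambda_\infty$-homogeneous limit $W_\infty$ close to $v_\infty$.
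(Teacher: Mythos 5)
Your argument is correct and is essentially the paper's own proof: a compactness--contradiction argument in which a failing sequence $v_k$ converges to a limit solution $v_\infty$, Lemma \ref{l31} and the blow-up expansion \eqref{exp} give $v_\infty\in\mathcal A(\eta_0/2,B_{\bar r})$ at some fixed scale $\bar r$, and uniform convergence transfers this back to $v_k$ for large $k$. The extra care you take in extracting a homogeneous limit of the witnesses $W_k$ (with the attendant constant degradation $c\mapsto c'$) is a detail the paper leaves implicit, not a change of method.
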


\begin{proof}
The proof follows by compactness. Assume that $v_k$ satisfies the hypothesis, and say $\|v_k\|_{H^1(B_1)} =1$, but the conclusion does not hold for any $r \in [1/k,1/2]$. Then, we can extract a convergent subsequence to a limit solution $\bar v$, for which we can find $r$ such that $\bar v \in \mathcal A (\eta/2, B_r)$. This implies that $v_k \in \mathcal A(\eta,B_r)$ for all large $k$, and we reached a contradiction.
\end{proof}

Finally, we may use the result of Focardi and Spadaro \cite{FoS} on the Haudorff dimension of the set of free boundary with frequency 
$\lambda \notin \cup_{m \in \mathbb N}\{m, 2m-\frac 12\}$, i.e.
$$ \Gamma:=\left\{x \in \{v=0\} \cap \{x_1=0\}| \quad \lambda_x \ne m, \quad \lambda_x \ne 2m-\frac 12, \quad \forall m \in \mathbb N \right\} $$

\begin{lem}[Covering of $\Gamma$] \label{cover}Assume that $v$ solves \eqref{ThinObstacle} and $ v \in \mathcal A(\eta,B_{2})$ for $\eta$ small. 
For any $\mu > d-3$, there is a finite cover of $\Gamma$ such that 
$$\Gamma \cap B_1 \subset \, \, \bigcup B_{r_i}(x_i) \quad \quad \mbox{with} \quad \sum r_i^\mu \le \frac 12, \quad \quad r_i \ge c_1,$$
and $$ v \in \mathcal A(\eta,B_{2r_i}(x_i)) \quad \mbox{for each $i$.}$$
Here $c_1=c_1(\eta,\mu)$ depends on $\eta$, $\mu$ and $d$.
\end{lem}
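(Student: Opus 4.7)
The plan is to combine the Hausdorff dimension estimate $\dim_{\mathcal{H}}(\Gamma\cap\overline{B_1})\le d-3$ of Focardi--Spadaro with the iterative scale-selection provided by Lemma \ref{n13}. Since $\mu>d-3$, we have $\mathcal{H}^{\mu}(\Gamma\cap\overline{B_1})=0$. By the definition of Hausdorff measure, for any prescribed $\varepsilon>0$ and $\delta>0$ there is a cover $\Gamma\cap\overline{B_1}\subset\bigcup_{j}B_{s_{j}}(y_{j})$ with $y_{j}\in\Gamma$, $s_{j}<\delta$, and $\sum_{j}s_{j}^{\mu}<\varepsilon$.

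The next step is to produce, at every point of $\Gamma$, a geometric sequence of ``good'' scales. Fix $y\in\Gamma$. The hypothesis $v\in\mathcal{A}(\eta,B_{2})$, together with Lemma \ref{l31}, forces the frequency $\lambda_{y}<3-\alpha_{3}$, and the Almgren rescalings of $v$ at $y$ subconverge to a $\lambda_{y}$-homogeneous profile. This yields some initial radius $\sigma_{0}(y)>0$ with $v\in\mathcal{A}(c,B_{\sigma_{0}(y)}(y))$, where $c$ is the constant from Lemma \ref{n13}. Rescaling $B_{\sigma_{0}(y)}(y)$ to $B_{1}$ and applying Lemma \ref{n13} gives $\sigma_{1}(y)\in[c(\eta)\sigma_{0}(y),\sigma_{0}(y)/2]$ with $v\in\mathcal{A}(\eta,B_{\sigma_{1}(y)}(y))$; iterating this rescale-and-apply step produces a strictly decreasing sequence $\sigma_{k}(y)\to 0$ whose consecutive ratios lie in $[c(\eta),1/2]$, and with $v\in\mathcal{A}(\eta,B_{\sigma_{k}(y)}(y))$ for every $k$.

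Combining the two, I would take the cover from the first paragraph with $\varepsilon=\tfrac{1}{2}c(\eta)^{\mu}$ and each $s_{j}$ small enough that $2s_{j}<\sigma_{0}(y_{j})$. For each $j$, since $\{\sigma_{k}(y_{j})\}_{k\ge 0}$ decays through $2s_{j}$ with consecutive ratios in $[c(\eta),1/2]$, there is an index $k^{*}$ with $\sigma_{k^{*}}(y_{j})\in[2s_{j},2s_{j}/c(\eta)]$. Set $r_{j}:=\sigma_{k^{*}}(y_{j})/2\in[s_{j},s_{j}/c(\eta)]$. Then $B_{r_{j}}(y_{j})\supset B_{s_{j}}(y_{j})$, so the new balls still cover $\Gamma\cap\overline{B_{1}}$; one has $v\in\mathcal{A}(\eta,B_{2r_{j}}(y_{j}))$ by construction; and
\[
\sum_{j}r_{j}^{\mu}\ \le\ c(\eta)^{-\mu}\sum_{j}s_{j}^{\mu}\ \le\ \tfrac{1}{2}.
\]
Extracting a finite subcover using compactness of $\Gamma\cap\overline{B_{1}}$ (or of its closure in the free boundary) and setting $c_{1}:=\min_{i}r_{i}>0$ gives all the required properties.

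The main obstacle is that the construction so far only produces a lower bound $c_{1}\ge\min_{j}s_{j}>0$ that is allowed to depend on $v$ (through $\sigma_{0}(\cdot)$). To obtain a genuinely universal $c_{1}=c_{1}(\eta,\mu,d)$, one has to replace the bare Hausdorff-measure input by the stronger quantitative (Naber--Valtorta type) content of Focardi--Spadaro's argument, which yields that for every scale $r$ the set $\Gamma\cap\overline{B_{1}}$ admits a cover by at most $C(\mu,d)\,r^{-\mu}$ balls of radius $r$. Combined with a compactness/semicontinuity argument ensuring $\sigma_{0}(\cdot)$ is uniformly bounded below along $\Gamma$, so that the iteration above supplies an admissible scale within a controlled dyadic window of any prescribed $r\sim c_{1}$, this produces the required cover with a $v$-independent lower bound.
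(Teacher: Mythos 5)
Your construction is essentially correct for a \emph{fixed} solution $v$: combining $\mathcal H^{\mu}(\Gamma\cap\overline{B_1})=0$ with the scale selection of Lemma \ref{n13} does produce a cover with $\sum r_i^{\mu}\le\frac12$ and $v\in\mathcal A(\eta,B_{2r_i}(x_i))$, and this is in fact the qualitative half of the paper's argument (applied there to a limit function). The genuine gap is the one you flag yourself: the lower bound on the radii you obtain depends on $v$, through the initial scales $\sigma_0(y)$ and through the choice of the Hausdorff cover, whereas the whole point of the lemma is the uniform constant $c_1=c_1(\eta,\mu,d)$. Your proposed repair does not close this gap. The reference \cite{FoS} gives a Hausdorff-dimension bound for $\Gamma$ (via a dimension-reduction/rectifiability analysis), not a uniform Minkowski-content or Naber--Valtorta covering estimate valid for all solutions; and even granting such an estimate, a single-scale cover by $C(\mu,d)\,r^{-\mu}$ balls of radius $r$ only yields $\sum r_i^{\mu}\le C(\mu,d)$, not $\le\frac12$ (one would need an intermediate exponent $\mu'\in(d-3,\mu)$ and then let $r\to 0$), and you would still need a lower bound on $\sigma_0(\cdot)$ that is uniform both along $\Gamma$ and over the class of admissible $v$. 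None of this is carried out, so as written the proof is incomplete.

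The paper gets the uniformity by a shorter and more robust device: compactness in the class of solutions. One assumes the conclusion fails for a sequence $v_k$ (normalized in $H^1(B_1)$) with $c_1=1/k$, extracts a limit $\bar v\in\mathcal A(\eta,B_2)$, and applies the qualitative covering argument to the single function $\bar v$: choose a finite cover of $\Gamma_{\bar v}\cap B_1$ (which is closed by Theorem \ref{FG} and upper semicontinuity of the frequency) with $\sum s_i^{\mu}\le\sigma$, then enlarge each ball by at most a factor $C(\eta)$ via Lemma \ref{n13} so that $\bar v\in\mathcal A(\eta/2,B_{2r_i}(x_i))$ --- the margin $\eta/2$ versus $\eta$ is what allows the approximation property to pass from $\bar v$ to $v_k$ for large $k$. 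Finally, the same cover contains $\Gamma_{v_k}\cap B_1$ for large $k$, because at any $x_0$ outside the cover the truncated frequency of $\bar v$ drops to at most $2+\frac12\alpha_2$ at some definite scale, which by the frequency gap excludes points of $\Gamma_{v_k}$ from a neighborhood of $x_0$. This contradicts the failure of the conclusion for $v_k$ and yields the $v$-independent $c_1$. If you wish to keep your direct approach, you must add an analogous compactness step; otherwise the uniform constant cannot be extracted from the Hausdorff-measure input alone.
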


\begin{proof}
The proof is again by compactness. Assume that $v_k$ satisfies the hypothesis, and say $\|v_k\|_{H^1(B_1)} =1$, but the conclusion does not hold with $c_1=1/k$. Then, we can extract a convergent subsequence to a limit solution $\bar v \in \mathcal A(\eta,B_2)$. By Lemma \ref{l31} the corresponding set $\Gamma_{\bar v}$ consists of those points for $\bar v$ which have frequency $\lambda_x \in (2,3)$, since the frequencies $\le 2$ must belong to the set $\{0,1, \frac 3 2,2\}$, see \cite{ACS}. By Theorem \ref{FG}, $\Gamma_{\bar v}$ is a closed set in $\overline{B}_1$. 
Since the dimension of $\Gamma_{\bar v}$ is $d-3$, see \cite{FoS}, given any $\sigma>0$ we can find a finite cover of $\Gamma_{\bar v} \cap B_1$ by balls $B_{s_i}(x_i)$ with $\sum s_i^\mu \le \sigma$. According to Lemma \ref{n13} each ball, can be enlarged by at most a $C(\eta)$ factor such that
$$ \bar v \in \mathcal A (\eta /2, 2 r_i), \quad \mbox{with} \quad s_i \le r_i \le C(\eta) s_i.$$
Then
$$\sum r_i^\mu \le C(\eta)^\mu \sum s_i^\mu \le \sigma C(\eta)^\mu \le \frac 12,$$
provided that $\sigma$ is chosen small. 

On the other hand $\Gamma_k \cap B_1 \subset \cup B_{r_i}(x_i)$ for all large $k$. 
Indeed, if $x_0$ is a point outside the union of balls, then $\bar N_{x_0}(r) \le 2 + \frac 12 \alpha_2$ for some small $r$, 
which shows that $\Gamma_k$ cannot intersect a small neighborhood of $x_0$ for all large $k$.

In conclusion $v_k$ satisfies the conclusion for all large $k$ and we reached a contradiction.
\end{proof}

\begin{rem}\label{R3.9}
The proof shows the conclusion can be replaced by $v \in \mathcal A (\kappa \eta, K r_i)$ for any constants $\kappa$ small, $K$ large 
provided that $c_1$ depends on these constants as well. 
In particular, if $v$ solve the thin obstacle problem for a constant coefficient linear operator 
$L_A v= tr A D^2v$ of ellipticity $\Lambda$, which can be reduced to the case of $\triangle$ after an affine deformation, 
then we can choose $\eta$ small depending on $\Lambda$ and $d$, so that Lemma \ref{l31} applies, 
and then $c_1=c_1(\mu,\Lambda,d)$ small so that Lemma \ref{cover} holds with $v \in \mathcal A (\eta/8, 2 r_i)$.
\end{rem}

\section{A refinement of Lemma \ref{QuadraticApproxTopStratum}}\label{ke1}

In this section we give a more precise version of Lemma \ref{QuadraticApproxTopStratum} and characterization of the top stratum $\Sigma^{d-1}$. 
In view of Theorem \ref{T1} each stratum $\Sigma^k$ is well defined. 
Indeed, if one of the blow-up quadratic polynomials at $0 \in \Sigma$ (see Proposition \ref{C11}), say $p$, 
satisfies $\lambda_2(D^2 p) >0$, then Theorem \ref{T1} applies, 
and we obtain that $p$ is the unique blow-up profile and $0 \in \Sigma^k$ with $k= dim  \, ker D^2 p$, $ k \le d-2$. 
Then we define the top stratum as $$\Sigma^{d-1}:= \Sigma \setminus \cup_{k=0}^{d-2} \Sigma^k.$$ 
Near a point in $\Sigma^{d-1}$, we use one-dimensional homogeneous quadratic polynomials $p$ 
with $D^2p \ge 0$, $\lambda_2(D^2p)=0$, $F(D^2p)=1$ to approximate the solution. We define the space of such polynomials as
$$\mathcal Q_0:=\left \{p:\quad p(x)=\frac{1}{2}x^TAx, \quad A= \gamma(e)\,  e \otimes e, \quad F(A)=1 \right \}.$$
Similar to Definition \ref{ApproxClass}, we define by $\mathcal S_0(p,\eps,r)$ the class of solutions to \eqref{OP} which are $\eps$-approximated by a quadratic polynomial $p \in \mathcal Q_0$ in $B_r$.

\begin{defi}\label{ApproxClass0}
Given $\eps,r\in(0,1)$ and $p\in\mathcal Q_0$, we say that
$$u\in\mathcal{S}_0(p,\eps,r)$$
  if 
  \begin{equation}\label{u-per2}
  u \text{ solves \eqref{OP} in $B_r$, and } \quad |u-p|\le\eps r^2 \text{ in $B_r$}.
  \end{equation} 
  \end{defi}

We remark that in Definition \ref{ApproxClass0}, we no longer require the second derivative bound $D^2 u \ge - c_0 \eps$ 
as in Definition \ref{ApproxClass}, which played an important role in Section \ref{kg2}. 
However, from \eqref{u-per2} we can always deduce a bound of the type $D^2 u \ge - C \eps$ in $B_{r/2}$, 
see Lemma \ref{L222} below.

We state the main lemma of this section, which is a dichotomy for the top stratum $\Sigma^{d-1}$.

\begin{lem}\label{L2} 
Assume that
$$0 \in \Sigma^{d-1}, \quad \quad u\in\mathcal{S}_0(p,\eps,1).$$
Given $\beta <1$, there are constants $\bar{\eps}$, $\rho$ small, depending on $\beta$ and the universal constants such that if $\eps<\bar{\eps}$, then 
$$ u\in\mathcal{S}_0(p',\eps', r), \quad \mbox {for some $r \in [\rho,\frac 12]$,}$$
and either
\begin{enumerate}
\item{ $$\eps'=\eps \, r^\beta,$$}
\item {or $\eps'=r^{-2}\|u-p'\|_{L^\infty(B_r)}$ and (see Definition \ref{aeta}) 
$$ \frac{1}{\eps'}(u-p') \in \mathcal A(\eta, B_{r}), \quad \quad \eps r^ {\alpha_0} \ge \eps'  \ge \eps r ^{\alpha_1},$$ 
for some constants $0<\alpha_0<\alpha_1<1$ depending on the dimension $d$, and $\eta$ small universal.}
\end{enumerate}

\end{lem}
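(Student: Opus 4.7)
The strategy adapts the proof of Lemma \ref{QuadraticApproxTopStratum}, replacing the qualitative use of the thin obstacle problem with the quantitative dichotomy from Section \ref{Thin}. After a rotation we may assume $p(x)=\tfrac12\gamma x_d^2$ and normalize by $\hat u=\tfrac1\eps(u-p)$, $\hat O=-\tfrac1\eps p$, the latter concentrating on $\Pi=\{x_d=0\}$ as $\eps\to 0$. Convexity of $F$ with $F\in C^1$ (via \eqref{CompareWithLinearizedEquation}) yields the linearized inequality $L_p(\hat u)\le 0$ with equality in $\{u>0\}$, where $L_p$ has constant coefficients. A compactness argument using Proposition \ref{EstimateForDifference} away from the coincidence set shows that $\hat u$ is $\delta(\eps)$-close in $L^\infty(B_{3/4})$ to a solution $v$ of the thin obstacle problem for $L_p$ on $\Pi$, with $\delta(\eps)\to 0$; an affine change reduces $L_p$ to the Laplacian (see Remark \ref{R3.9}). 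Since $0\in\Sigma$, the frequency of $v$ at $0$ cannot equal $0$, $1$, or $\tfrac32$, since those would make $0$ a regular or interior point of $\{u=0\}$.

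I then apply the Section \ref{Thin} dichotomy. In the \textit{high-frequency} case $v\notin\mathcal A(c,B_1)$ (with $c$ as in Lemma \ref{l31}), the frequency of $v$ at $0$ exceeds $3-\alpha_3$, so by Theorem \ref{FG} it is at least $3$; hence $\|v\|_{L^\infty(B_\rho)}\le C\rho^{3-\sigma}$ for any $\sigma>0$. Choosing $\sigma<1-\beta$, then $\rho$ small depending on $\beta$, and $\bar\eps$ small so $\delta(\eps)$ is negligible at scale $\rho$, we get $\|\hat u\|_{L^\infty(B_\rho)}\le \rho^{2+\beta}$; setting $r=\rho$, $p'=p$, $\eps'=\eps\rho^\beta$ gives alternative (1). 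In the \textit{low-frequency} case $v\in\mathcal A(c,B_1)$, Lemma \ref{n13} produces $r\in[c(\eta),\tfrac12]$ with $v\in\mathcal A(\eta/4,B_r)$, and transferring to $\hat u$ (via $\bar\eps$ small so $\delta\ll\eta r^3$) gives $\hat u\in\mathcal A(\eta/2,B_r)$ with a $\lambda$-homogeneous approximator $W$, $2\le\lambda\le 3-\alpha_3$. I decompose $W=W_{\mathrm{tan}}+W_{\mathrm{res}}$ where $W_{\mathrm{tan}}$ lies in the tangent space of $\mathcal Q_0$ at $p$ (polynomials of the form $c(e\cdot x)^2+(e\cdot x)L(x)$ for $L$ linear and $e$-perpendicular, arising from rotations of $e=e_d$). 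Setting $p'$ as the projection of $p+\eps W_{\mathrm{tan}}$ onto $\mathcal Q_0$ and $\eps'=r^{-2}\|u-p'\|_{L^\infty(B_r)}$, the residual $\tfrac\eps{\eps'}W_{\mathrm{res}}$ is still $\lambda$-homogeneous, so $\tfrac1{\eps'}(u-p')\in\mathcal A(\eta,B_r)$. The bounds $\eps r^{\alpha_0}\ge\eps'\ge\eps r^{\alpha_1}$ follow from $W_{\mathrm{res}}$ having frequency in $[2+\alpha_2,3-\alpha_3]$ (with $\alpha_2$ the frequency gap near $2$), yielding $\alpha_0=\alpha_2$ and $\alpha_1=1-\alpha_3$.

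The main obstacle will be handling the decomposition at $\lambda=2$. A generic $2$-homogeneous harmonic thin obstacle profile has a component outside the tangent space of $\mathcal Q_0$ (which contains only harmonic polynomials with vanishing $x''$-diagonal Hessian), so after absorbing the tangent part the residual could still carry frequency exactly $2$, breaking $\eps'\le\eps r^{\alpha_0}$. To resolve this I would argue that any non-tangent $2$-homogeneous residual would produce a quadratic approximation of $u$ with $\lambda_2(D^2)>0$, placing $0$ in some $\Sigma^k$ with $k\le d-2$ via Theorem \ref{T1}, contradicting $0\in\Sigma^{d-1}$; hence the true residual frequency must be strictly greater than $2$, so $\ge 2+\alpha_2$ by the frequency gap. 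The projection error onto $\mathcal Q_0$ coming from the nonlinear constraint $F(\gamma(e)\,e\otimes e)=1$ is $O(\eps^2)$ and harmless. The remaining quantitative pieces---the linearization estimate, Harnack-type transfer of approximations between $v$ and $\hat u$, and the normalization of $W$---follow the template in \cite{SY}.
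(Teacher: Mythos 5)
Your overall strategy is the paper's: linearize to the thin obstacle problem for the constant-coefficient operator $L_p$, exclude frequencies $1$ and $\tfrac32$ using that $0\in\Sigma$, rule out frequency exactly $2$ for the non-tangential part by showing it would force $\lambda_2(D^2 p')>0$ and hence $0\in\Sigma^k$ with $k\le d-2$ (this is precisely Step~3 of the paper's proof), absorb the tangential modes $x_1x_i$ into $p'$, and read off $\alpha_0,\alpha_1$ from the frequency gaps at $2$ and $3$. The paper packages your ``tangent space of $\mathcal Q_0$'' as the quadratic part of the \emph{odd} reflection of $v$ in $x_1$ (which is harmonic, so its contribution is $\sum b_i x_1x_i+O(|x|^3)$), and then runs the dichotomy on the Almgren frequency $\lambda_0^*$ of the \emph{even} part at the origin; the whole argument is phrased as a compactness/contradiction on sequences $\eps_j,\rho_j\to 0$, which is how the uniform lower bound $r\ge\rho$ is obtained.

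The one step that does not hold as written is the direction of your dichotomy. From $v\notin\mathcal A(c,B_1)$ you infer that the frequency of $v$ at $0$ exceeds $3-\alpha_3$; but Lemma~\ref{l31} gives only the converse implication, and the inference is false: a solution whose frequency at the origin is, say, $2.5$ need not be close to its homogeneous blow-up at unit scale, so it can fail to belong to $\mathcal A(c,B_1)$ while your ``high-frequency'' decay $\|v\|_{L^\infty(B_\rho)}\le C\rho^{3-\sigma}$ is simply wrong for it. The repair is to split on the frequency $\lambda_0^*$ of $v^e$ (equivalently, of $v$ minus its tangential quadratic part) at the origin: if $\lambda_0^*\ge 3$, Almgren monotonicity gives $\lambda(r)\ge 3$ at \emph{every} scale and hence the decay $\|v^e\|_{L^\infty(B_\rho)}\lesssim \rho^{3}$ needed for alternative (1); if $\lambda_0^*\in(2,3)$, the expansion \eqref{exp} produces the homogeneous approximator $V_r$ only at some small scale $r$ depending on $v$, and the uniform bound $r\ge\rho$ must then be recovered by compactness over the family of limit solutions --- which is exactly why the paper argues by contradiction with $\rho_j\to 0$. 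A second, smaller bookkeeping point: when $\lambda=2$ your single approximator $W$ reduces to $W_{\mathrm{tan}}$ after the residual is ruled out, and the $\mathcal A(\eta,B_r)$ membership for $\tfrac{1}{\eps'}(u-p')$ must then be extracted from the next-order behavior of $v^e$ at a finer scale rather than from $W$ itself; measuring the approximation error against $\|V_r\|_{L^\infty(B_r)}$ (the size of the residual, as in \eqref{v-bi}) rather than against $\|W\|_{L^\infty(B_r)}$ is what makes this work.
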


We can iterate Lemma \ref{L2} indefinitely. Lemma \ref{L22} below shows that if end up in case (2) during the iteration, 
then we remain in case (2) and $u$ cannot be $C^{2,\alpha}$ at the origin for $\alpha$ close to 1. 
We define such a point to be anomalous  for $\Sigma^{d-1}$.

\begin{defi}\label{AN2}
We say that $x_0 \in \Sigma^{d-1}$ is anomalous and write $x_0 \in \Sigma^{d-1}_a$ if 
$$ \|u - \frac 12 (x-x_0)^T D^2 u(x_0) (x-x_0)\|_{L^\infty(B_r(x_0))} \ge r ^{2+\alpha} $$
for some $\alpha<1$ and all $r$ small. We denote
$$\Sigma^{d-1}_g:=\Sigma^{d-1} \setminus \Sigma^{d-1}_a.$$
\end{defi}  

Next we describe the iteration step of case (2) in Lemma \ref{L2}.
 
\begin{lem}\label{L22} 
Assume that 
\begin{equation}\label{s0p}
u\in\mathcal{S}_0(p,\eps,2), \quad \frac{1}{\eps} (u-p) \in \mathcal A(\eta,B_2).
\end{equation}

Then

a) The conclusion (2) holds at any point in $\Sigma^{d-1} \cap B_1$ and $\Sigma^{d-1}=\Sigma_a^{d-1}$ in $B_1$.

b) Fix $\mu> d-3$. Then, if $\eps \le \bar\eps(\mu)$ small, we can find a cover of $\Sigma^{d-1} \cap B_1$ with $B_{r_i}(x_i)$ such that
$$ \Sigma^{d-1} \cap B_1 \subset \, \, \cup B_{r_i}(x_i), \quad \quad \sum r_i^\mu \le \frac 12, \quad r_i \ge c(\mu),  $$
$$u\in\mathcal{S}_0(p_i,\eps_i, 2 r_i), \quad \frac{1}{\eps_i} (u-p_i) \in \mathcal A(\eta,B_{2r_i}).$$
Moreover $\Sigma^{d-1}_a = \emptyset$ in dimension $d=2$ , and $\Sigma^{d-1}_a$ is finite if $d=3$.

\end{lem}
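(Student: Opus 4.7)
The plan rests on the observation that the hypothesis $\frac{1}{\eps}(u-p) \in \mathcal A(\eta, B_2)$ is a quantitative assertion that the normalized error approximately solves the thin obstacle problem on the hyperplane $\{x \cdot e = 0\}$ (where $p = \frac{\gamma(e)}{2}(x\cdot e)^2$) with ``frequency'' at most $3-\alpha_3$. Both parts of the lemma will be proved by transferring the results of Section \ref{Thin} (Lemmas \ref{l31}, \ref{n13}, \ref{cover}, and Theorem \ref{FG}) from the limiting thin obstacle solution back to $u$ itself, via standard compactness and stability arguments.

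For part (a), I would first show that at any $x_0 \in \Sigma^{d-1}\cap B_1$, after re-centering and choosing a suitable $p_{x_0} \in \mathcal Q_0$, the rescaled error $\eps_{x_0}^{-1}(u-p_{x_0})$ still belongs to $\mathcal A(\eta, B_{r})$ at some scale $r \ge c(\eta,d)$. The proof is by compactness, modeled on that of Lemma \ref{l31}: if the property failed along a sequence, the limit would be a thin obstacle solution whose frequency at the corresponding singular point exceeds $3-\alpha_3$, contradicting Theorem \ref{FG}. Consequently $u$ falls into case (2) of Lemma \ref{L2} at $x_0$, and the stability part of that lemma propagates the property to all smaller scales. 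Iterating yields approximations $(p_m, \eps_m)$ with $\eps_m \ge \eps \prod_k r_k^{\alpha_1}$, so that $\|u - q_{x_0}\|_{L^\infty(B_\rho(x_0))} \ge c \rho^{2+\alpha_1}$ for arbitrarily small $\rho$; since $\alpha_1 < 1$, this is exactly the anomalous condition of Definition \ref{AN2}. Hence $\Sigma^{d-1} \cap B_1 \subset \Sigma^{d-1}_a$.

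For part (b), applying Lemma \ref{cover} to the approximating thin obstacle solution $v$ yields a finite cover of the set of free boundary points of $v$ with frequency in $(2, 3-\alpha_3]$ by balls $B_{r_i}(x_i)$ with $\sum r_i^\mu \le 1/2$ and $r_i \ge c(\mu)$. By part (a), $\Sigma^{d-1}\cap B_1$ lies inside this set in the thin-obstacle limit, so the same balls cover it. A further compactness step, parallel to the one in the proof of Lemma \ref{cover} (and using Remark \ref{R3.9}), then upgrades the approximation inside each ball to the full statement $u \in \mathcal S_0(p_i, \eps_i, 2r_i)$ with $\eps_i^{-1}(u-p_i) \in \mathcal A(\eta, B_{2 r_i})$.

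The dimensional consequences follow by iterating this covering. In $d = 2$, the admissible frequencies of the $1$-dimensional thin obstacle problem form a discrete set disjoint from $(2, 3-\alpha_3]$, so the covered set is empty and $\Sigma^{d-1}_a = \emptyset$. In $d = 3$, the exponent $\mu > 0$ may be taken arbitrarily small: this bounds the number of cover balls at each scale, and the uniform lower bound $r_i \ge c(\mu)$ combined with the persistent minimum scale at which case (2) of Lemma \ref{L2} holds prevents anomalous points from accumulating, giving finiteness of $\Sigma^{d-1}_a \cap B_1$. The main technical obstacle I foresee is this last step — calibrating the persistent scale from Lemma \ref{L2}(2) against the covering radii from Lemma \ref{cover} in order to promote the Hausdorff dimension $0$ bound to actual finiteness in dimension three.
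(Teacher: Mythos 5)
Your proposal follows essentially the same route as the paper: pass to the thin-obstacle limit $v$, use Lemma \ref{l31}/Theorem \ref{FG} to bound the frequency by $3-\alpha_3$ at every point of $B_1$ (hence ruling out case (1) of Lemma \ref{L2} and forcing the anomalous behavior of Definition \ref{AN2} via Remark \ref{R4.6}), and then transfer the Focardi--Spadaro covering of Lemma \ref{cover} from $v^e$ back to $u$ by the compactness of Step 5. The one step you flag as delicate --- upgrading the dimension-$0$ bound to actual finiteness when $d=3$ --- is also left implicit in the paper's own proof, so your sketch is at the same level of completeness as the original.
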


We combine Lemmas \ref{L2} and \ref{L22} and obtain the following characterization of the set $\Sigma^{d-1}$.
\begin{thm}\label{T2}
Assume that $\eps \le \bar \eps_1$, 
$$u\in\mathcal S_0(p,\eps,1),$$
with $\bar{\eps}_1$ a small, universal constants. Then in $B_{1/2}$ we have

a) $u$ is $C^{2,\alpha_0}$ on $\Sigma^{d-1}$: 
for each $x_0 \in \Sigma^{d-1}$, $\exists \, q_{x_0}$ quadratic polynomial, with 
$$|u-q_{x_0}|(x)\le C \eps |x-x_0|^{2+\alpha_0}.$$

b) $u$ is uniform $C^{2,\beta}$ for any $\beta<1$ on the non-anomalous set $\Sigma^{d-1}_g$: 
$$|u-q_{x_0}|(x)\le C(\beta) \eps |x-x_0|^{2+\beta}, \quad \quad \mbox{if} \quad x_0 \in \Sigma^{d-1}_g,$$
and the anomalous set $\Sigma_a^{d-1}$ is open in $\Sigma^{d-1}$ and has Haussdorff dimension $d-3$. In particular in dimension $d=2$, $
\Sigma^1_a = \emptyset$, and in dimension $d=3$, $\Sigma^2_a$ is finite.

\end{thm}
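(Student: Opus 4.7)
\medskip

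\textbf{Proof proposal for Theorem \ref{T2}.} The plan is to iterate Lemma \ref{L2} at each point $x_0 \in \Sigma^{d-1}$ and then use Lemma \ref{L22} to analyze what happens when the anomalous alternative (2) first appears. Fix $x_0 \in \Sigma^{d-1}\cap B_{1/2}$. After translating to the origin and applying the hypothesis $u\in\mathcal S_0(p,\eps,1)$, we obtain a starting approximation. Inductively, suppose $u\in\mathcal S_0(p_m,\eps_m,s_m)$ with $s_m\to 0$. We rescale by $s_m$ and apply Lemma \ref{L2} to produce $p_{m+1}\in\mathcal Q_0$, a radius $r_{m+1}\in[\rho,1/2]$, and $\eps_{m+1}$ such that $u\in\mathcal S_0(p_{m+1},\eps_{m+1},s_{m+1})$ with $s_{m+1}=r_{m+1}s_m$. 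Both alternatives of Lemma \ref{L2} give $\eps_{m+1}\le \eps_m r_{m+1}^{\alpha_0}$ (since in case (1) $\beta$ may be chosen large and in case (2) this is built into the statement). This yields a sequence of polynomials $p_m$ with $|D^2 p_{m+1}-D^2 p_m|\le C\eps_m$, hence $p_m\to q_{x_0}$ and the standard Taylor-type summation gives
\begin{equation*}
\|u-q_{x_0}\|_{L^\infty(B_r(x_0))}\le C\eps\, r^{2+\alpha_0},\qquad r\le 1/2,
\end{equation*}
proving part (a).

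For part (b), one of two scenarios occurs during the iteration at $x_0$. If alternative (1) of Lemma \ref{L2} holds for every $m$, then for any prescribed $\beta<1$ we may run the iteration with this $\beta$, giving $\eps_{m+1}\le\eps_m r_{m+1}^\beta$ and hence the $C^{2,\beta}$ approximation $\|u-q_{x_0}\|_{L^\infty(B_r(x_0))}\le C(\beta)\eps\,r^{2+\beta}$; this shows $x_0\in\Sigma^{d-1}_g$. If instead alternative (2) ever occurs at some step $m_0$, then the hypothesis \eqref{s0p} of Lemma \ref{L22} holds at scale $s_{m_0}$, so Lemma \ref{L22}(a) forces alternative (2) to hold at \emph{every} subsequent scale. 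The lower bound $\eps'\ge\eps r^{\alpha_1}$ with $\alpha_1<1$ in Lemma \ref{L2}(2) then precludes $C^{2,\alpha}$ behavior at $x_0$ for $\alpha>\alpha_1$, so $x_0\in\Sigma^{d-1}_a$ by Definition \ref{AN2}. Moreover Lemma \ref{L22}(a) applied at $x_0$ at the appropriate scale shows that all nearby points of $\Sigma^{d-1}$ are anomalous as well, which gives the openness of $\Sigma^{d-1}_a$ inside $\Sigma^{d-1}$.

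To bound the Hausdorff dimension of $\Sigma^{d-1}_a$, we iterate Lemma \ref{L22}(b). Given $\mu>d-3$, around every point of $\Sigma^{d-1}_a\cap B_1$ we may set up the hypothesis \eqref{s0p} at some scale, and then the lemma produces a finite cover $\{B_{r_i}(x_i)\}$ with $\sum r_i^\mu\le 1/2$, and the rescaled solution on each $B_{2r_i}(x_i)$ again verifies \eqref{s0p}. Iterating this covering $k$ times produces a cover of $\Sigma^{d-1}_a\cap B_1$ by balls with total $\mu$-mass at most $2^{-k}$, so $\mathcal H^\mu(\Sigma^{d-1}_a\cap B_1)=0$. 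Since $\mu>d-3$ was arbitrary, $\dim_{\mathcal H}\Sigma^{d-1}_a\le d-3$. In dimension $d=2$ this means $\Sigma^1_a=\emptyset$, and for $d=3$ the finiteness statement is exactly the last assertion of Lemma \ref{L22}(b) (an open $0$-dimensional subset of $\Sigma^{d-1}$ relatively closed in $B_{1/2}$ cannot have accumulation points inside $\Sigma^{d-1}_a$).

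The main obstacle is verifying that the rescaling argument is compatible with both iteration lemmas: the rescaled polynomial at each step must remain in $\mathcal Q_0$ with bounded $\gamma(e)$, the direction $e$ may drift slowly, and the frequency approximation $\tfrac{1}{\eps_m}(u-p_m)\in\mathcal A(\eta,B_{2})$ required by Lemma \ref{L22} must be propagated exactly through the rescalings. The drift of $e$ and of $D^2 p_m$ must be summable so that the limit $q_{x_0}$ exists and the constants do not blow up; this uses $|D^2p_{m+1}-D^2p_m|\le C\eps_m$ together with the geometric decay $\eps_m\lesssim \eps\, s_m^{\alpha_0}$, which are built into Lemmas \ref{L2} and \ref{L22} but need to be assembled carefully.
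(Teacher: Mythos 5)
Your proposal is correct and follows essentially the same route as the paper: Theorem \ref{T2} is obtained by iterating Lemma \ref{L2} at each point of $\Sigma^{d-1}$ (both alternatives give the geometric decay $\eps'\le\eps r^{\alpha_0}$ for part a), and invoking Lemma \ref{L22} once alternative (2) occurs to get persistence of the anomalous behavior, openness of $\Sigma^{d-1}_a$, and the covering argument yielding $\mathcal H^\mu(\Sigma^{d-1}_a)=0$ for every $\mu>d-3$. The bookkeeping you flag at the end (summability of $|D^2p_{m+1}-D^2p_m|\le C\eps_m$, rescaling compatibility, and restarting the iteration with $\beta$ closer to $1$ once $\eps_m$ is small enough) is exactly what the paper leaves implicit, so no genuine gap remains.
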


Part a) was obtained in \cite{SY}, while part b) is a consequence of the results of this section.

The proofs of Lemma \ref{L2} and \ref{L22} follow the same strategy as Lemma \ref{L1} in Section \ref{kg2}. 
The difference is that now the rescaled error 
\begin{equation}\label{hau-p}
\hat u := \frac{1}{\eps}(u-p)
\end{equation}
is well approximated by a solution $v$ to the thin obstacle problem involving $L_p$, the linearized operator of $F$ at $D^2 p$, which has constant 
coefficients. 
Then the dichotomy (1) or (2) is dictated by the frequency $\lambda_0$ of $v$ at $0$, whether or not $\lambda_0 \ge 3$. 
We will show that $\lambda_0 >2$ as follows: 

a) if $\lambda_0 =1$ then $0 \in Reg$; 

b) if $\lambda_0 = 3/2$ then $0 \in Reg$; 

c) if $\lambda_0=2$ then $0 \in \Sigma^k$ for some $k \le d-2$. 

Before we proceed with the proofs of the main lemmas we provide a lower bound for $D^2 u$.

\begin{lem}\label{L222} Assume that $u$ solves \eqref{OP} and
$$|u-p| \le \eps \quad \mbox{in $B_1$,} $$
for some convex quadratic polynomial $p$ with $F(D^2p)=1$. Then
$$D^2 u \ge - C \eps \quad \mbox{in $B_{3/4}$,} $$
for some $C$ universal.
\end{lem}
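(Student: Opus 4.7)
The plan is to bound the smallest eigenvalue of $D^2u$ from below by $-C\eps$ in $B_{3/4}$. Equivalently, I would show that for each unit direction $e$, the pure second derivative $w:=D_{ee}u$ satisfies $w\ge -C\eps$ in $B_{3/4}$.

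First, I would record the structural facts that $u\in C^{1,1}_{\rm loc}$ with universal norm by Lee~\cite{L}, so $w$ is defined a.e.\ and is bounded by a universal constant; $w=0$ in the interior of $\{u=0\}$; and in $\{u>0\}$, differentiating $F(D^2u)=1$ twice in the direction $e$ and invoking the convexity of $F$ yields $L_u(w)\le 0$, so $w+C\eps$ is a supersolution of the linearized operator $L_u$ in $\{u>0\}$. The operator $L_u$ is uniformly elliptic with $C^\alpha$ coefficients inside $\{u>0\}$, since $u$ is $C^{2,\alpha}$ there by Evans--Krylov.

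The argument would then proceed via the weak minimum principle for $w+C\eps$ in $\{u>0\}\cap B_{3/4}$, with boundary control on the three relevant portions: in $\{u=0\}^\circ$, $w+C\eps=C\eps>0$; on the free-boundary portion $\partial\{u>0\}\cap B_{3/4}$, the blow-up analysis of Lee (Theorem~\ref{C11}) shows that every blow-up of $u$ is a nonnegative convex quadratic, so $\liminf_{\{u>0\}\ni x\to x_0}w(x)\ge 0$ at each $x_0\in\partial\{u>0\}$; and on the exterior portion $\partial B_{3/4}\cap\{u>0\}$, one invokes the proximity $|u-p|\le\eps$ in $B_1$ together with Proposition~\ref{EstimateForDifference}. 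Concretely, for any $x_0$ on this exterior boundary that admits a ball $B_r(x_0)\subset\{u>0\}$ of universal radius $r$, a rescaling of Proposition~\ref{EstimateForDifference} yields $|D^2u(x_0)-D^2p|\le C\eps$, and since $D^2p\ge 0$ this gives $w(x_0)\ge -C\eps$.

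The main obstacle is the exterior-boundary case when $x_0\in\partial B_{3/4}$ lies close to $\partial\{u>0\}$, so that no ball of universal radius contained in $\{u>0\}$ is available. I would handle this by working on the slightly larger ball $B_{7/8}$ (where $|u-p|\le\eps$ still holds) and by adaptively rescaling in $B_{r_0}(x_0)$ with $r_0:={\rm dist}(x_0,\partial\{u>0\})$: the rescaled Proposition~\ref{EstimateForDifference} controls $D^2u-D^2p$ on the boundary of such a small ball, which combined with the blow-up convexity at the nearby free-boundary point (a quantitative version of the limiting nonnegativity above) upgrades to a clean $w\ge -C\eps$ bound. The minimum principle for the $L_u$-supersolution $w+C\eps$ in $\{u>0\}\cap B_{3/4}$ then propagates the boundary bound to the interior, and the immediate inequality $w=0\ge -C\eps$ in the coincidence set completes the proof. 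All constants remain universal because Lee's $C^{1,1}$ bound, Proposition~\ref{EstimateForDifference}, and the ellipticity/convexity of $F$ are all quantitative and universal.
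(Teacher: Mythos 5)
Your overall strategy --- treating $D_{ee}u+C\eps$ as an $L_u$-supersolution in $\{u>0\}\cap B_{3/4}$ and running the minimum principle with boundary control on each piece of the boundary --- breaks down exactly where you flag ``the main obstacle,'' and the fix you propose is circular. The assertion that $\liminf_{x\to x_0}D_{ee}u(x)\ge 0$ at every free boundary point ``by the blow-up analysis of Lee'' is not available: Theorem \ref{C11} gives only locally uniform ($C^0$) convergence of the rescalings to a convex quadratic, and says nothing about the behavior of $D^2u$ up to $\partial\{u>0\}$. A lower bound on $D_{ee}u$ near the contact set is a semiconvexity estimate, which is precisely (a qualitative form of) the statement being proved; invoking ``a quantitative version of the limiting nonnegativity'' near the free boundary is assuming the conclusion. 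The same issue infects the exterior boundary: for $x_0\in\partial B_{3/4}$ with $d_0:=\mathrm{dist}\,(x_0,\{u=0\})$ small, the rescaled Proposition \ref{EstimateForDifference} only yields $|D^2u(x_0)-D^2p|\le C\eps\, d_0^{-2}$, which is $O(1)$ rather than $O(\eps)$ when $d_0\sim\eps^{1/2}$, so no boundary bound of size $C\eps$ is obtained there.

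The paper avoids boundary control at the free boundary altogether by a measure-theoretic argument. Since $F(D^2p)=1$, $F(0)=0$ and $D^2p\ge 0$ force $\lambda_1(D^2p)\ge 1/\Lambda$, the inequality $|u-p|\le\eps$ confines $\{u=0\}$ to a slab $\{|x_1|\le C\eps^{1/2}\}$ (after a rotation). Outside the slab your rescaled interior estimate does apply at scale $\sim|x_1|$ and gives $u_{ee}\ge -C\eps|x_1|^{-2}$ (using $D^2p\ge 0$); inside the slab one only uses the universal bound $D^2u\ge -C_dI$ from the $C^{1,1}$ estimate. Hence the nonnegative subsolution $w=u_{ee}^-$ of $L_u$ satisfies $\|w\|_{L^{1/3}(B_{7/8})}\le C\eps$: the slab contributes little because its measure is $O(\eps^{1/2})$, and $\bigl(\eps|x_1|^{-2}\bigr)^{1/3}$ is integrable across $\{x_1=0\}$. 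The local maximum principle for subsolutions, which accepts $L^p$ control for any $p>0$, then upgrades this to $w\le C\eps$ in the interior. If you insist on a boundary-value argument, the correct objects are the second incremental quotients $\delta_h^2u$, which are manifestly nonnegative at free boundary points since $u\ge 0$ and $u=|\nabla u|=0$ there; but even then the relevant boundary is that of $\{u>0\}$ shrunk by $h$, and it is the $L^p$-smallness combined with weak Harnack that actually closes the proof.
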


\begin{proof}
Assume that $p=\sum \frac 12 a_i \, x_i^2$, with $a_1 \ge a_2 \ge ..\ge a_d \ge 0$. Then $\{u=0\} \subset \{ |x_1| \le C \eps^{1/2}\}$, and by Proposition \ref{EstimateForDifference} we find
$$ |D^2 (u-p)| \le C \eps |x_1|^{-2}  \quad \mbox{in} \quad B_{7/8} \cap \{ |x_1| \ge C \eps^{1/2}\}.$$
We use that $D^2p \ge 0$, together with $D^2 u \ge - C_d I$ in the strip $\{ |x_1| \le C \eps^{1/2}\}$ and conclude that $w=u_{ee}^-$ satisfies
$$ L_u w \ge 0, \quad w \ge 0, \quad \quad \|w\|_{L^p(B_{7/8})} dx \le C \eps,$$
for $p=\frac 13$.
By weak Harnack inequality we obtain $|w| \le C \eps$ in $B_{1/2}$ and the lemma is proved.
\end{proof}

Without loss of generality we may assume that after an affine transformation of bounded norm, and a rotation we satisfy
\begin{equation}\label{Id}
p= \frac 12 a_1 x_1^2, \quad \quad D F(D^2 p) =I, \quad \quad L_p w =\triangle w. 
\end{equation}

\begin{lem}\label{UniformLipschitz}
Let $u$ and $p$ be as in Lemma \ref{L2} and $\hat u$ as in \eqref{hau-p}. Then
$$|\nabla \hat u |\le C \text{ in $B_{1/2}$.}$$ 
\end{lem}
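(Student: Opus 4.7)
The idea is to combine interior elliptic estimates in the positivity set $\{u>0\}$ with a compactness/thin-obstacle comparison near the free boundary. We exploit two structural features of $\hat u$: it is universally bounded (by $1$ in $B_1$), and it satisfies a one-sided Laplace inequality globally in $B_1$.

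\emph{Step 1: Structural inequalities.} By hypothesis $|\hat u|\le 1$ in $B_1$. In $\{u>0\}$, both $u$ and $p$ solve $F(D^2\,\cdot)=1$; convexity of $F$ gives $0=F(D^2u)-F(D^2p)\ge L_p(u-p)$, hence $L_p\hat u\le 0$ in $\{u>0\}$. Since we have normalized to $L_p=\Delta$ via \eqref{Id}, and since on the coincidence set $\hat u=-p/\eps$ satisfies $\Delta(-p/\eps)=-a_1/\eps\le 0$, we conclude
\[
\Delta \hat u\le 0\quad\text{in $B_1$ in the viscosity sense.}
\]
Analogously, by convexity $L_u\hat u\ge 0$ in $\{u>0\}$. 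Finally, $\hat u\ge -p/\eps$ globally because $u\ge 0$.

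\emph{Step 2: Interior bound away from the free boundary.} For $x_0\in\{u>0\}\cap B_{1/2}$ with $r:=\mathrm{dist}(x_0,\partial\{u>0\})\ge c$ universal, both $u$ and $p$ solve $F(D^2\,\cdot)=1$ in $B_r(x_0)$, and $|u-p|\le\eps$ there. Rescaling and applying Proposition \ref{EstimateForDifference} to $u-p$ yields $|\nabla(u-p)(x_0)|\le C\eps/r$, equivalently $|\nabla\hat u(x_0)|\le C/r$. Thus only points near the free boundary can violate the Lipschitz bound.

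\emph{Step 3: Lipschitz bound near the free boundary (main obstacle).} I would argue by compactness. Suppose the conclusion fails, so there exist $u_k\in\mathcal S_0(p_k,\eps_k,1)$ satisfying the hypotheses of Lemma \ref{L2} with $M_k:=\sup_{B_{1/2}}|\nabla\hat u_k|\to\infty$. Step 2 forces any near-maximizer $x_k$ to satisfy $\mathrm{dist}(x_k,\partial\{u_k>0\})\to 0$. Since $|\hat u_k|\le 1$ and $\Delta\hat u_k\le 0$ globally (and $\hat u_k+p_k/\eps_k\ge 0$), one extracts (up to a subsequence) a limit $\hat u_k\to\bar u$ in $L^p_{\mathrm{loc}}(B_1)$ with $\bar u$ bounded, superharmonic, and $\ge 0$ on $\{x_1=0\}$, and harmonic off that hyperplane (the positivity set fills up $\{x_1\ne0\}$ in the limit because $u_k>0$ whenever $p_k\gtrsim\eps_k$). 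Hence $\bar u$ is a thin-obstacle solution for the Laplacian with obstacle on $\{x_1=0\}$, which is known to be $C^{1,1/2}$ locally with universal bound. Performing a Campanato-type rescaling at $x_k$ with rate $M_k^{-1}$ before passing to the limit (using the $L_u$-subsolution property to anchor interior $C^{1,\alpha}$ bounds off the free set), the unit gradient at the origin of the blow-up contradicts this regularity.

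The hard part is Step 3: the absence of an a priori $D^2u\ge-c_0\eps\,I$ bound (we only know $D^2u\ge-C\eps\,I$ from Lemma \ref{L222}) means the compactness must be set up carefully, and the identification of the limit as a genuine thin-obstacle solution requires checking that the coincidence set $\{u_k=0\}$ concentrates on $\{x_1=0\}$ in the limit, which follows because $|u_k-p_k|\le\eps_k$ forces $\{u_k=0\}\subset\{|x_1|\le C\sqrt{\eps_k}\}\to\{x_1=0\}$. Once these ingredients are in place, the Lipschitz bound follows by contradiction.
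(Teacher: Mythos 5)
Your Steps 1--2 are fine, but Step 3 --- which you correctly identify as the whole content of the lemma --- has a genuine gap, and it is essentially circular. To identify the limit $\bar u$ as a thin-obstacle solution and, more importantly, to transfer its $C^{1,1/2}$ regularity back to $\hat u_k$ at finite $k$ so as to contradict $M_k\to\infty$, you need locally \emph{uniform} convergence of $\hat u_k$, i.e.\ equicontinuity, which is precisely the Lipschitz bound you are trying to prove. (Convergence in $L^p_{\mathrm{loc}}$ of bounded superharmonic functions gives you a superharmonic limit, but no pointwise gradient information at the near-maximizers $x_k$.) This is not an incidental difficulty: in the paper the logical order is the reverse of yours --- Lemma \ref{UniformLipschitz} is proved first by elementary means exactly so that the compactness to the thin obstacle problem (Lemma \ref{LimitingProblem}) becomes available afterwards. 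The ``Campanato-type rescaling at rate $M_k^{-1}$'' is also not set up: after rescaling you have no a priori bound on the rescaled functions (no doubling or Harnack-type control near the free boundary), so no limit can be extracted.

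The fix is much simpler than a blow-up argument, and you already hold both ingredients. Lemma \ref{L222} gives $D^2u\ge -C\eps\,I$ in $B_{3/4}$ (the constant $C$ in place of $c_0$ is harmless); since $p_{ee}=0$ for $e\perp e_1$, this means $\hat u_{ee}\ge -C$ for all such $e$. A one-sided second-derivative bound along segments together with $|\hat u|\le 1$ yields $|D_e\hat u|\le C$ in $B_{1/2}$ for every $e\perp e_1$ (the standard semiconvexity-plus-$L^\infty$ argument). For the remaining direction, use your own Step 1: $\triangle\hat u=L_p\hat u\le 0$, so $\hat u_{11}\le -\sum_{i\ge 2}\hat u_{ii}\le C(d-1)$; this one-sided bound on $\hat u_{11}$ together with $|\hat u|\le 1$ gives $|D_1\hat u|\le C$ in $B_{1/2}$. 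No compactness is needed.
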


\begin{proof} By Lemma \ref{L222} we know that in $B_{3/2}$, $\hat u_{ee} \ge -C$ for all unit directions $e \perp e_1$. 
Since $\|\hat u\|_{L^\infty} \le 1$ we obtain $|D_e u| \le C$ in $B_{1/2}$. 

On the other hand $\triangle \hat u = L_p \hat u \le 0$, thus we also have $\hu_{11}\le C$ in $B_{3/4}$, which gives $|D_1 u| \le C$ in $B_{1/2}$.\end{proof}  

This lemma provides us with enough compactness for the family of normalized solutions. 

\begin{lem}\label{LimitingProblem}
Let $F_j$ be a sequence of operators satisfying \eqref{FirstAssumption}-\eqref{Ellipticity}, and $u_j$, $p_j$, $\eps_j$ satisfying the assumptions of 
Lemma \ref{L2}, and \eqref{Id}, with $\eps_j \to 0$. 
Then up to a subsequence, the normalized solution 
$$\hat{u}_j=\frac{1}{\eps_j}(u_j-p_j)$$ converges locally uniformly in $B_1$ to a solution $v$ to the thin obstacle problem \eqref{ThinObstacle}.
\end{lem}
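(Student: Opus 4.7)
The plan is to first secure compactness for the sequence $\hat u_j$ and then verify in turn each of the three conditions defining the thin obstacle problem \eqref{ThinObstacle} by passing to the limit in carefully chosen inequalities. For compactness, the key input is Lemma \ref{UniformLipschitz}, which gives a uniform Lipschitz bound for $\hat u_j$ on $B_{1/2}$; applying it to rescaled solutions centered at interior points, the bound extends (with constants depending on the distance to $\partial B_1$) to every compact subset of $B_1$. Together with $|\hat u_j|\le 1$ coming from $u_j\in\mathcal S_0(p_j,\eps_j,1)$, Arzel\`a--Ascoli produces a subsequence $\hat u_j \to v$ locally uniformly in $B_1$, with $v$ Lipschitz.

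For the obstacle condition on $\{x_1=0\}$, the normalization \eqref{Id} makes $p_j$ depend only on $x_1$ and vanish on $\{x_1=0\}$, so $\hat u_j = \eps_j^{-1} u_j \ge 0$ there, and the inequality passes to the limit. For the one-sided bound $\Delta v \le 0$ in $B_1$, the convexity inequality \eqref{CompareWithLinearizedEquation} combined with $F_j(D^2 u_j) \le 1 = F_j(D^2 p_j)$ yields $L_{p_j}(u_j - p_j) \le 0$ in the viscosity sense. Since \eqref{Id} forces $L_{p_j}=\Delta$, we obtain $\Delta \hat u_j \le 0$ viscously in $B_1$, and this is stable under locally uniform convergence.

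For the harmonicity on $\{x_1 \ne 0\} \cup \{v > 0\}$, the plan is to fix a compact set $K$ in this open set. On $K$, either $p_j \ge c > 0$ for $j$ large (first case) or $\hat u_j > c/2$ for $j$ large (second case), so in both cases $u_j > 0$ and $F_j(D^2 u_j) = 1 = F_j(D^2 p_j)$ holds classically. Proposition \ref{EstimateForDifference} then gives $\|u_j - p_j\|_{C^{2,\alpha}(K')} \to 0$ on slightly smaller $K' \subset K$, so $D^2 u_j \to D^2 p_j$ uniformly there. The full sandwich from \eqref{CompareWithLinearizedEquation} reads $L_{p_j} \hat u_j \le 0 \le L_{u_j} \hat u_j$; by \eqref{Id}, $L_{p_j}=\Delta$, and the coefficients $DF_j(D^2 u_j)$ of $L_{u_j}$ converge uniformly to $DF_j(D^2 p_j) = I$ thanks to the $C^{\alpha_F}$ modulus of $DF_j$ from \eqref{SecondAssumption}. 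Passing to the limit yields $\Delta v = 0$ on $K'$, hence on the full set.

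The main technical point requiring care is the quantitative coefficient convergence $L_{u_j}\to\Delta$ used in the sandwich; this hinges on upgrading the $L^\infty$ bound $|u_j-p_j|\le\eps_j$ to an interior $C^{2,\alpha}$ estimate via Proposition \ref{EstimateForDifference}, which is legitimate precisely on compact subsets of $\{x_1 \ne 0\} \cup \{v > 0\}$ where the equation $F_j = 1$ holds classically for all large $j$. All other steps are standard passage-to-the-limit arguments.
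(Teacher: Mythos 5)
Your proposal is correct and takes essentially the same approach as the paper: compactness from the locally uniform Lipschitz bound of Lemma \ref{UniformLipschitz}, then passage to the limit in the equation using the normalization \eqref{Id} and the uniform $C^{1,\alpha_F}$ regularity of the $F_j$. The paper organizes the limit step through the normalized operators $G_j(M)=\eps_j^{-1}\bigl(F_j(\eps_j M+D^2p_j)-F_j(D^2p_j)\bigr)\to \mathrm{tr}\,M$, which is equivalent to your convexity sandwich via \eqref{CompareWithLinearizedEquation} and Proposition \ref{EstimateForDifference}.
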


\begin{proof}
Lemma \ref{UniformLipschitz} gives locally uniform $C^{0,1}$ bound on the family $\{\hat{u}_j\}$. Consequently, up to a subsequence they converge to some $v$ and, 
$$G_j(D^2\hat{u}_j)=\frac{1}{\eps_j}(\chi_{\{u_j>0\}}-1)=-\frac{1}{\eps_j}\chi_{\{u_j=0\}},$$
with
$$G_j(M):=\frac{1}{\eps_j}(F_j(\eps_jM+D^2p_j)-F_j(D^2p_j)).$$ 
 By uniform $C^{1,\alpha_F}$ estimate on the family $\{F_j\}$,  up to a subsequence $G_j$ locally uniformly converges to $\triangle$, and the result 
 easily follows. 

 \end{proof}

Now we give the proof of Lemma \ref{L2}, which follows the steps of Lemma 4.1 in \cite{SY}. The main difference is that now we include the discussion on the frequency 2 case, see Step 3 below, which was not done in \cite{SY}. 

\begin{proof}[Proof of Lemma \ref{L2}]

Suppose by contradiction that for a sequence of $\eps_j, \rho_j \to 0,$ a sequence of operators $F_j$ and a sequence of solutions $u_j$ to \eqref{OP} with these operators such that $$u_j\in\mathcal{S}_0(p_j,
\eps_j,1), \quad \quad 0\in\Sigma^{d-1}(u_j),$$ and \eqref{Id} holds,  
but the conclusion of Lemma \ref{L2} does not hold for $u_j$, with $\eta$ as in Remark \ref{R3.9}.
 Lemma \ref{LimitingProblem} shows that up to a subsequence, 
 $$\hat{u}_j\to v \text{ locally uniformly in $B_{1}$,}$$ 
 where $v \in C^{0,1}_{loc}$ solves the thin obstacle problem \eqref{ThinObstacle}.
 
 Moreover, $u_j(0)=0$ for all $j$ implies $v(0)=0.$ Denote by $\lambda_0$ the frequency of $v$ at $0$, and by $\lambda_0^*$ the frequency of $v^{e}$, the even part of $v$ with respect to the $x_1$ variable,
 $$v^{e}(x):= \frac 12 (v(x_1,x_2,..,x_n) + v (-x_1,x_2,..,x_n)).$$ 
 In Steps 1-2 which are identical with \cite{SY}, we show that $\lambda_0 \ge 2$. In Step 3 we prove that $\lambda_0^*>2$. Then in Steps 4-5 we establish the conclusions (1)-(2) for $u_j$, depending on whether or not $\lambda_0^* \ge 3$. 
We only sketch the first 2 steps, the details can be found in Lemma 4.1 in \cite{SY}. 

\

\textit{Step 1: $\nabla v(0)=0$.}

\

Since $v(0)=0$ we have the expansion 
$$v=a_+x_1^++a_-x_1^ +  + o(|x|) \quad \mbox{ as $x\to 0.$}$$

First we claim that $a_\pm \le 0$. 
Indeed, if say $a_+ >0,$ then we can use the uniform convergence of the $u_j$ to $v$ 
and an explicit barrier to show that $u_j(0)>0$ for all large $j$, contradiction.

Then we claim that $a_\pm$ cannot be negative. 
If say $a_+<0,$ then we can use a barrier to prove that $\{u_j=0\}$ contains a small open ball around a point $t e_1$ for some $t$ small. On the other 
hand Lemma \ref{Convexity} implies that $u_{ee}\ge 0$ in $B_{1/2}$ for all unit direction $e$ close to $e_1$. This means that $\{u_j=0\}$ 
contains an open cone with vertex at $0$, hence $0 \in Reg(u_j)$, contradiction.

\

\textit{Step 2: $\lambda_0 \ge 2$.}

\

In view of Step 1 we only need to show that $\lambda_0 \ne \frac 32$. Otherwise, $v$ has an expansion
$$ v = a  Re(z^{\frac 32}) + o(|x|^{\frac 32}), \quad a>0,  $$
where $z$ represent the complex number in a 2d plane generated by unit directions $\nu$ and $e_1$ for some $\nu \perp e_1$. 
Then we fix some $r>0$ so that $D_\nu v> 0$ in $B_r\cap\{x_1\neq0\}$, and pick $\sigma$ small 
such that $D_\nu v \ge 2 \sigma $ in $\{|x_1| \ge c' r\} \supset \{u_j > c r^2\}$ for all large $j$. 

Since for $e \perp e_1$ we have $\frac{1}{\eps_j} D_e u_j=D_e \hat u_j \to D_e v$ uniformly on compact sets of $B_r \setminus \{x_1=0\}$, and by Lemma \ref{UniformLipschitz}, $D \hat u_j \ge -C$, we see that Remark \ref{Monotonicity2} applies for $u_j$ and such unit directions $e$ close to $\nu$. We obtain $D_e u_j \ge 0$ and contradict $0 \in \Sigma(u_j)$. 

\

\textit{Step 3: $\lambda_0^*>2$.}

\

We decompose $v=v^{o}+ v^{e}$ in the odd and even part with respect to $x_1$ variable. Then $v^{o}$ is a harmonic function which vanishes on $x_1=0$, and by Step 1
$$ v^{o}=\sum_{i>1} b_i x_1 x_i + O (|x|^3).$$
The even part $v^e$ still solves the thin obstacle problem \eqref{ThinObstacle}, and by Steps 1-2, its frequency at $0$, $\lambda_0^* \ge 2$. 

Let us assume by contradiction that $\lambda_0^*=2$. Then $v$ has an expansion
$$v=\frac 12 x \cdot A x + o(|x|^2) $$
with $tr \, A=0$, $e\cdot Ae\ge 0$ for all $e\in\Sph\cap\{x_1=0\},$ 
and with strict inequality for some unit direction say $e_2$, $e_2\cdot Ae_2=a_2 > 0$.
Consequently, 
$$|v-\frac{1}{2}x\cdot Ax|<\delta(r) \, r^2 \text{ in $B_r$, with $\delta(r) \to 0$ as $r \to 0$.}$$
The uniform convergence of $\hat{u}_j\to v$ gives for large $j$ (we drop the subindex $j$ for simplicity of notation)
 \begin{equation}\label{QuadApprox2}
|u-p -\eps\frac{1}{2}x\cdot Ax|<2\delta\eps r^2 \text{ in $B_r$.}
\end{equation}  

By Cauchy-Schwarz inequality there is a constant $C_A$, depending on $|A|$, so that 
 \begin{equation}\label{66}D^2p+ \eps A +C_A\eps^2 I \ge c \, e_1\otimes e_1 + \eps \, a_2 \, \, e_2 \otimes e_2,\end{equation}
 for some $c$ universal. The hypotheses on $F$, \eqref{Id}, and $tr \, A=0$ imply that  
 $$|F(D^2p+ A\eps+C_A\eps^2I)-1|\le C_A'\eps^{1+\alpha_F}$$ by assumption \eqref{SecondAssumption}.
Consequently, there is $t\in [-C_A'',C_A'']$ such that the polynomial $$p'(x)=p+\frac{1}{2}\eps x\cdot Ax+\frac{1}{2}C_A\eps^2|x|^2+\frac{1}{2}t\eps^{1+\alpha_F}x_1^2$$ satisfies $F(D^2p')=1$ and, by \eqref{66}, we have (for all $\eps$ small)
$$ D^2p'=D^2p + \eps A +C_A\eps^2I+t\eps^{1+\alpha_F}e_1\otimes e_1 \ge \frac c 2 \, e_1\otimes e_1 + \eps \, a_2 \, \, e_2 \otimes e_2 \ge 0,$$  
and $\lambda_2 (D^2 p') \ge a_2 \eps$. 
Finally \eqref{QuadApprox2} implies that 
\begin{equation}\label{u-p'}
|u-p'| \le 2\delta\eps r^2+C_A''\eps^{1+\alpha_F}r^2 \le 3\delta\eps r^2 \quad \quad \mbox{in $B_r$.}
\end{equation}
By Lemma \ref{L222} this implies that $D^2 u \ge - C \delta \eps$ in $B_r$ for some $C$ large universal. Thus $u$ satisfies the hypotheses of Lemma \ref{L1}
$$ u \in \mathcal S (p',\eps',r) \quad \quad \mbox{with} \quad \eps':= -c_0^{-1} C \delta \eps, \quad \mbox{and} \quad \lambda_2(D^2 p') \ge a_2 \eps \ge \kappa_0 \eps',$$
provided that we choose $r$ small (depending on $a_2$) such that $\delta=\delta(r) \le a_2 C'$ for some $C'$ large universal. In conclusion $0 \in \Sigma^k$ for some $k \le d-2$, and we reached a contradiction.

\

\textit{Step 4: If $\lambda_0^* \ge 3$ then (1) holds.}

\

Then, by using the behavior of $v^o$ and $v^e$ near $0$ we find that
$$v=\sum_{i>1} b_i x_1 x_i  + o(|x|^{2+\beta}),$$
and the computations of Step 3 apply with the corresponding $A$, and with $\delta$ replaced by $\delta r^\beta$. Notice that in this case we can 
construct $p'$ such 
that $D^2 p'$ has rank 1, i.e. it is a multiple of $e_1' \otimes e_1'$ where $e_1'$ is the unit direction of $e_1 + \eps \sum _{i>2}b_i/a_1 e_i$. We achieve 
this by first completing the square in $p + \eps \sum_{i>1} b_i x_i x_1$ and then by adding a multiple $ t  \eps^{1+\alpha_F}e_1' \otimes e_1'$ so that 
$F(D^2p')=1$.  As in Step 3 we obtain \eqref{u-p'} thus $$ u \in \mathcal S_0(p',\eps',r), \quad \eps' \le \eps r^\beta,$$
and (1) holds for all large $j$, and we reached a contradiction.

\

\textit{Step 5: If $\lambda_0^* \in (2,3)$ then (2) holds.}

\

By \eqref{exp}, for each small $r$, we can find a $\lambda_0^*$-homogeneous function $V_r$, such that
\begin{equation}\label{v-bi}
\left \|v- \sum b_i x_1 x_i - V_r \right \|_{L^\infty(B_r)} \le \delta(r) \, \|V_r\|_{L^\infty(B_r)},
\end{equation}
and, $$\delta(r) \to 0, \quad r^{\lambda_0^*-\alpha} \ge \|V_r\|_{L^\infty(B_r)} \ge r^{\lambda_0^*+\alpha} \quad \mbox{as $r \to 0$,} $$
for any fixed $\alpha>0$. We can define $p'$ as in Step 4 and obtain as in \eqref{u-p'} that
\begin{equation}\label{u-p2}
|u-p' - \eps V_r| \le 2 \eps \, \delta \, \|V_r\|_{L^\infty} + C_A'' \eps^{1+\alpha_F} r^2 \le 3  \eps \, \delta \, \|V_r\|_{L^\infty}
\end{equation}
in $B_{r}$. We choose $$\eps':= r^{-2}\|u-p'\|_{L^\infty(B_r)} \sim \eps r^{-2} \|V_r\|_{L^\infty(B_r)},$$ and $\delta=\delta(r)$ small, such that $\delta \le \eta/8$. The inequality above 
shows that conclusion (2) holds for all large $j$ and we reached a contradiction.

\end{proof} 

\begin{rem}\label{R4.6}
Notice that \eqref{u-p2} implies that $\|u-q\|_{L^\infty(B_r)} \ge c \eps' r^2$ for any quadratic polynomial $q$.
\end{rem}

\begin{rem}\label{R4.7}
In Steps 1-3 we expanded $v$ near the origin and used the uniqueness of the blow-ups when the frequency $\lambda_0 \in \{1,\frac 32, 2\}$. This is not necessary since, as in Steps 4-5, the expansion \eqref{exp} suffices. 
\end{rem}

Lemma \ref{L22} follows easily from the proof above and the results of Section \ref{Thin}.

\begin{proof}[Proof of Lemma \ref{L22}]
The class $\mathcal A (\eta, B_r)$ is invariant under multiplications by constants, and after relabeling $\eps$ we may assume that $\eps=2^{-2} \|u-p\|_{L^\infty(B_2)}$. 

Then $\hat u$ is $\eta$-approximated in $B_2$ by a $\lambda$-homogeneous function $W$ with $\lambda <3-\alpha_3$, and with $\|W\|_{L^\infty(B_2)} \sim 1$. 
Thus, in the proof of Lemma \ref{L2} above, the limiting function $v$ satisfies $$v \in \mathcal A (2 \eta, B_2), \quad \mbox{ and} \quad  v^e \in \mathcal A(4 \eta,B_2).$$
Since $\eta$ is chosen small we find $\lambda^*(x) \le 3-\alpha_3 < 3$ for all $x \in B_1$, and therefore Step 4 and case (1) cannot happen at any $x \in \Sigma^{d-1} 
\cap B_1$. This means that, as we iterate Lemma \ref{L2} at the origin, we always end up in case (2) and obtain a sequence of radii $r=r_n \to 0$ so that (see Remark \ref{R4.6})
$$ \|u-q\|_{B_r} \ge c \eps r^{2+\alpha_1}, \quad \quad  1/2 \ge r_{n+1}/r_n \ge c,$$ 
for any quadratic polynomial $q$. Hence, $0 \in \Sigma^{d-1}_a$ according to Definition \ref{AN2}, and part a) is proved.

Part b) is a direct consequence of Lemma \ref{cover}. Indeed, we can cover the set $\Gamma(v^e)$ of points $x \in B_1$ of frequency $\lambda^*(x) \in 
(2,3)$ by a finite cover $B_{r_i}(x_i)$ satisfying the desired properties and with $v^e \in \mathcal A(\eta/8, B_{2r_i}(x_i))$. 
This means that in each ball  $B_{2r_i}(x_i)$, $v$ satisfies \eqref{v-bi} with $\delta=\eta/8$, and by Step 5 we conclude that 
$$u \in \mathcal S_0 (p_i',\eps_i', 2 r_i), \quad (u-p_i')/\eps_i' \in \mathcal A (\eta, B_{2r_i}(x_i)).$$ 
 By Steps 1-3 of Lemma \ref{L2}, $\Sigma^{d-1}(u_j) \cap B_1$ belongs to the finite cover of $\Gamma(v^e)$ for all large $j$, 
 and the proof is complete.

\end{proof}


\begin{thebibliography}{100}

\bibitem[ACS]{ACS} I. Athanasopoulos, L. Caffarelli, S. Salsa, {\em The structure of the free boundary for lower dimensional obstacle problem}, Amer. J. Math. 130 (2008), 485--498. 

\bibitem[C1]{C1} L. Caffarelli, {\em The regularity of free boundaries in higher dimensions}, Acta Math. 139 (1977), 155-184.
\bibitem[C2]{C2} L. Caffarelli, {\em The \OP revisited}, J. Fourier Anal. Appl. 4 (1998), 383-402.

\bibitem[CC]{CC} L. Caffarelli, X. Cabr\'e, {\em Fully nonlinear elliptic equations}, AMS Colloquium Publications, 43, AMS Providence RI, 1995. 

\bibitem[CSV1]{CSV1} M. Colombo, L. Spolaor, B. Velichkov, {\em A logarithmic epiperimetric inequality for the obstacle problem}, Geom. Funct. Anal. 28 (2018), 1029--1061.

\bibitem[CSV2]{CSV2} M. Colombo, L. Spolaor, B. Velichkov, {\em Direct epiperimetric inequalities for the thin obstacle problem and applications},  Comm. Pure Appl. Math, no. 73, (2020), 384-420.

\bibitem[FS]{FS} A. Figalli, J. Serra, {\em On the fine structure of the free boundary for the classical obstacle problem}, Invent. Math. 215 (2019), 311-366.


\bibitem[FSR]{FSR} A. Figalli, J. Serra, X. Ros-Oton, {\em Generic regularity of free boundaries for the obstacle problem}, Publ. Math. Inst. Hautes \'Etudes Sci. 132 (2020), 181-292.

\bibitem[FoS]{FoS} M. Focardi, E. Spadaro, {\em On the measure and the structure of the free boundary of the lower dimensional obstacle problem},  Arch Rational Mech Anal 230 (2018), 125--184. 

\bibitem[GP]{GP} N. Garofalo, A. Petrosyan, {\em Some new monotonicity formulas and the singular set in the lower dimensional obstacle problem}, Invent. Math., 177(2) (2009), 415--461.

\bibitem[KN]{KN} D. Kinderlehrer, L. Nirenberg, {\em Regularity in free boundary problems}, Ann. Scuola Norm. Sup. Pisa Cl. Sci. 4 (1977), no. 2, 373-391.


\bibitem[L]{L} K. Lee, {\em Obstacle problems for the fully nonlinear elliptic operators}, Thesis (Ph. D.), New York University, 1998. 

\bibitem[SY]{SY} Savin O., Yu H. {\em Regularity of the singular set in the fully nonlinear obstacle problem}, accepted by J. Eur. Math. Socc.

\bibitem[W]{W} G. Weiss, {\em A homogeneity improvement approach to the obstacle problem}, Invent. Math. 138 (1999), 23-50.

\end{thebibliography}
\end{document}